\definecolor{darkgreen}{rgb}{.25,.65,.25}
\newcommand{\ind}{\mathds{1}}
\setlist[itemize]{itemsep=-1mm}
\newtheorem{theorem}{Theorem}
\newtheorem{lem}[theorem]{Lemma}
\newtheorem{prop}[theorem]{Proposition}
\newtheorem{corollary}[theorem]{Corollary}
\newtheorem{rem}[theorem]{Remark}
\newcommand{\vertiii}[1]{{\left\vert\kern-0.25ex\left\vert\kern-0.25ex\left\vert #1 
    \right\vert\kern-0.25ex\right\vert\kern-0.25ex\right\vert}}
\newcommand{\e}{\epsilon}
\newcommand{\N}{\mathbb{N}}
\newcommand{\R}{\mathbb{R}}
\def\epsilon{\varepsilon}
\def\tilde{\widetilde}
\newcommand{\ds}{\displaystyle}
\DeclareMathOperator{\supp}{supp}
\DeclareMathOperator{\sign}{sign}
\DeclareMathOperator{\erfc}{erfc}
\numberwithin{equation}{section}
\numberwithin{theorem}{section}
\Crefname{assumption}{Assumption}{Assumptions}
\Crefname{theorem}{Theorem}{Theorems}
\Crefname{lem}{Lemma}{Lemmas}
\Crefname{cor}{Corollary}{Corollaries}
\Crefname{prop}{Proposition}{Propositions}
\Crefname{theorem}{Theorem}{Theorems}
\Crefname{conjecture}{Conjecture}{Conjectures}
\begin{document}

\title{{\bf{Propagation in a Fisher-KPP equation with non-local advection}}\thanks{The project leading to this publication was performed within the framework of Excellence Initiatives of Aix-Marseille University A*MIDEX and Universit\'e de Lyon (ANR-11- IDEX-0007), and Excellence Laboratories Archimedes LabEx (ANR-11-LABX-0033) and LABEX MILYON (ANR-10-LABX-0070), French ``Investissements d'Avenir" programmes operated by the French National Research Agency (ANR). It has received funding from the ANR NONLOCAL project (ANR-14-CE25-0013) and from the European Research Council under the European Union's Seventh Framework Programme (FP/2007-2013) ERC Grant Agreement n.~321186~- ReaDi~- Reaction-Diffusion Equations, Propagation and Modelling, and under the European Union Horizon 2020 Research and Innovation Programme (Grant Agreement n. 639638). CH was partially supported by the National Science Foundation Research Training Group grant
DMS-1246999.}}

\author{Fran{\c{c}}ois Hamel$^{\,\hbox{\small{a}}}$ and Christopher Henderson$^{\,\hbox{\small{b}}}$\\
\\
\footnotesize{$^{\,\hbox{\small{a}}}$ Aix Marseille Univ, CNRS, Centrale Marseille, I2M, Marseille, France}\\
\footnotesize{$^{\,\hbox{\small{b}}}$ The University of Chicago, Department of Mathematics, Chicago, IL, USA}}

\date{}

\maketitle

\begin{abstract}
We investigate the influence of a general non-local advection term of the form $K*u$ to propagation in the one-dimensional Fisher-KPP equation.  This model is a generalization of the Keller-Segel-Fisher system.  When $K \in L^1(\R)$, we obtain explicit upper and lower bounds on the propagation speed which are asymptotically sharp and more precise than previous works.  When $K \in L^p(\R)$ with $p>1$ and is non-increasing in $(-\infty,0)$ and in $(0,+\infty)$, we show that the position of the ``front'' is of order $O(t^{1/p})$ if $p < \infty$ and $O(e^{\lambda t})$ for some $\lambda > 0$ if $p = \infty$ and $K(+\infty)>0$.  
We use a wide range of techniques in our proofs.
\end{abstract}

\tableofcontents


\section{Introduction and main results}\label{sec:results}

The model we consider is
\begin{equation}\label{eq:the_equation}
	\begin{cases}
		u_t + [ (K*u) u]_x = u_{xx} + u(1-u), \qquad &\text{ in } (0,+\infty)\times \R,\\
		u(0,x) = u_0(x) \geq 0, &\text{ in } \R,
	\end{cases}
\end{equation}
where $u_0$ is compactly supported and non-negative and satisfies
\begin{equation}\label{hypu0}
0 < \|u_0\|_{L^\infty(\R)} \leq 1.
\end{equation}
The unknown, $u$,  typically represents the population density of a species.  Throughout the paper, $K$ is a bounded odd function which is monotonic except at $x = 0$, in the sense that it is monotonic in $(-\infty,0)$ and in $(0,+\infty)$. The notation $K*u$ stands for the convolution $K*u(t,\cdot)$ of $K$ with $u(t,\cdot)$. We also assume that $K$ does not change sign in $(-\infty,0)$ and in $(0,+\infty)$. We define the ``jump'' of $K$ as
\begin{equation}\label{eq:J}
	J := \lim_{x\to 0^-} 2K(x).
\end{equation}
We provide a cartoon picture of examples of $K$ and the value of $J$ in~\Cref{fig:K}. We mainly focus on three cases, leading to completely different behaviours: (i)~when $K \in L^1(\R)$, (ii)~when $K\in L^p(\R)$ with $p\in(1,+\infty)$ and (iii)~when $\lim_{x\to+\infty} K(x) > 0$. 

\begin{figure}[h]
\begin{center}
\begin{overpic}[scale=.31]
	{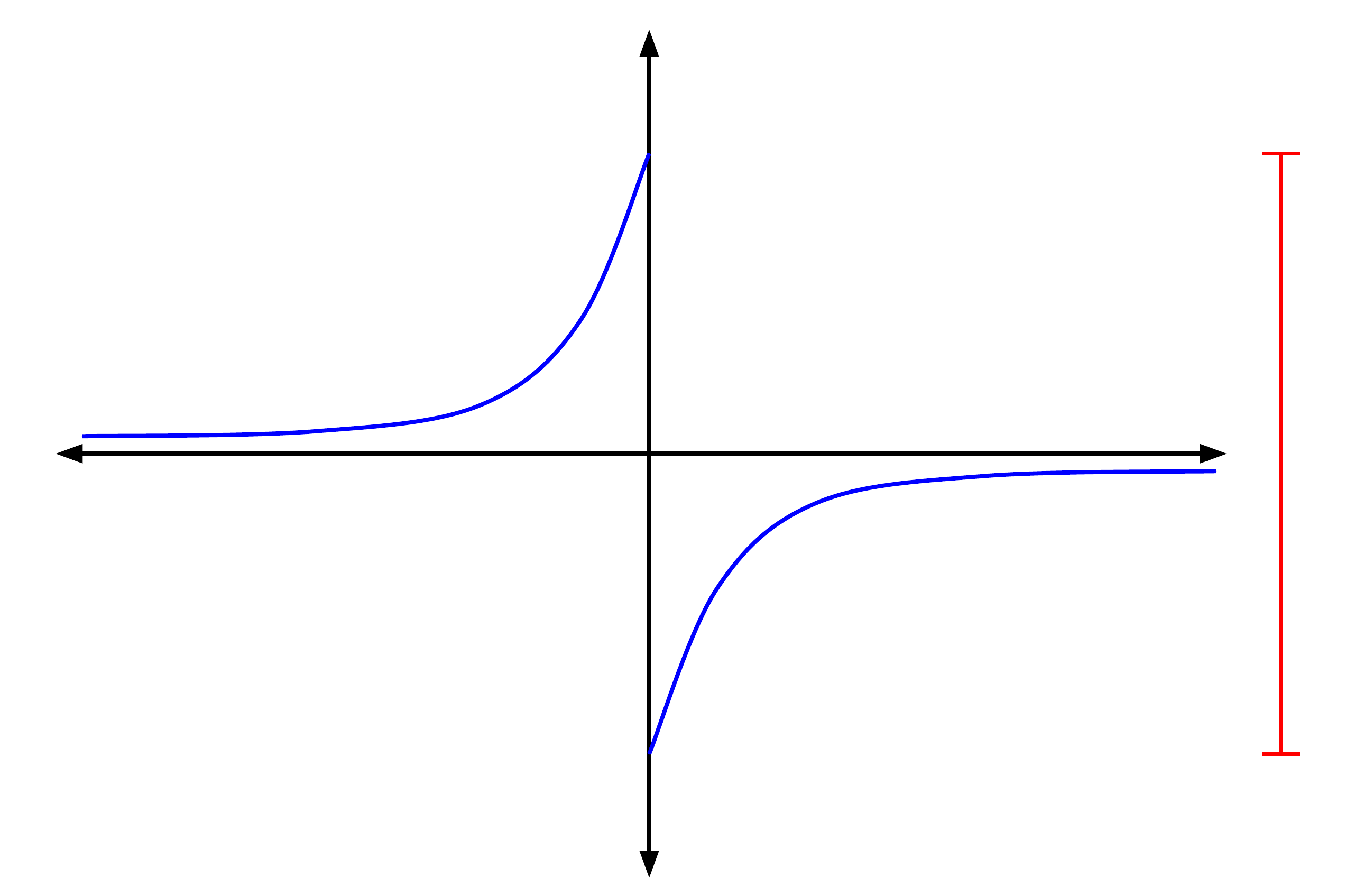}
	\put(4,28){$x$}
	\put(23, 39){\color{blue} $K$}
	\put(95,30){\color{red} $J$}
\end{overpic}
\begin{overpic}[scale=.31]
	{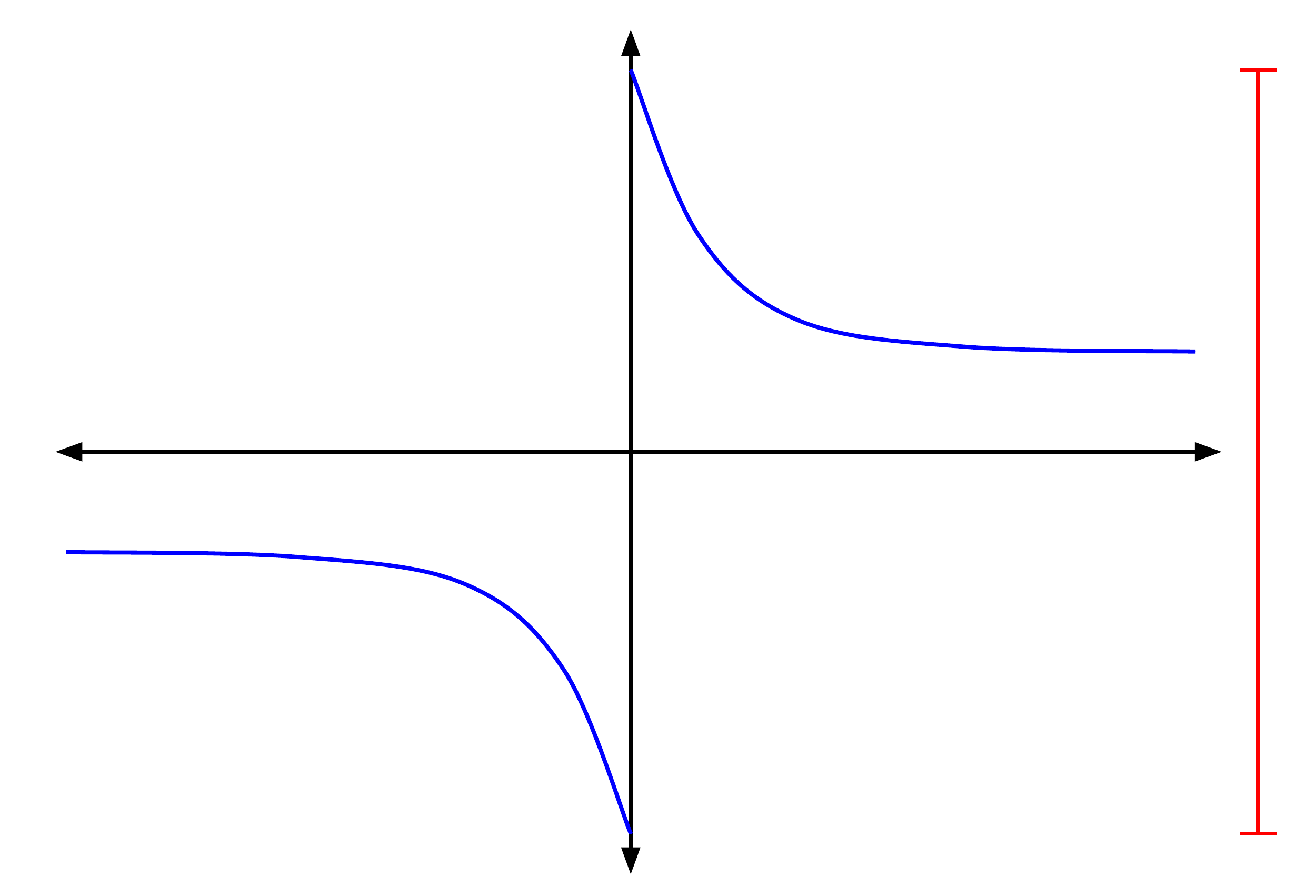}
	\put(4,29){$x$}
	\put(58, 48){\color{blue} $K$}
	\put(97,30){\color{red} $-J$}
\end{overpic}
\caption{A cartoon of $K$ when (left) $K\in L^1(\R)$ and is increasing everywhere except at $x = 0$ and (right) when $K \in L^\infty(\R)$ and is decreasing everywhere except at $x = 0$.}
\label{fig:K}
\end{center}
\end{figure}

Our motivation for considering models of this type is two-fold. Firstly, reaction-advection-diffusion models where the advection is a non-linear term have garnered interest recently, see e.g.~\cite{ BKV, BerestyckiConstantinRyzhik, ConstantinLewickaRyzhik,CRRV, Crooks,CrooksMascia, Henderson_Boussinesq, Lewicka, LewickaMucha, MalhamXin, TexierPicardVolpert,  VCKRR, VladimirovaRosner} for a sampling of such works.  However, in many cases, precise bounds on propagation are intractable.  As such,~\eqref{eq:the_equation} serves as a toy model which may provide insight into other models.  Secondly, when $K(x) = -\chi\sign(x) e^{-|x|/\sqrt{d}}/(2d)$ for some positive constants $\chi$ and $d$ (as in the left part of~\Cref{fig:K}), the above is the Keller-Segel-Fisher model dating back at least to~\cite{Tello,TelloWinkler}. This model, which can be written as
\begin{equation}\label{ks}\left\{\begin{array}{rcl}
u_t+\chi\,(uv_x)_x-u_{xx} & = & u(1-u),\vspace{3pt}\\
-d\,v_{xx}+v & = & u,\end{array}\right.
\end{equation}
has been the subject of considerable interest recently.  Since there have been numerous works regarding questions of regularity and stability for different choices of domains and dimensions, we point the interested to~\cite{Tello,TelloWinkler,Winkler} and the many works which cite these.  Closer to our interest, there have been a few works studying the propagation speeds for Keller-Segel-Fisher systems~\cite{NadinPerthameRyzhik,SalakoShen1,SalakoShen2}.   The case where $\chi < 0$ in~\eqref{ks} corresponds to negative chemotaxis, where a species secretes a chemorepellent causing the species to spread.  This behavior has been observed in, for example, slime molds~\cite{KeatingBonner} and {\em entamoeba histolytica}, the bacteria that causes dysentery~\cite{ZakiAndrewInsall}.  Mathematically this behavior has attracted less interest due, perhaps, to the ease with which well-posedness may be established.  However, we mention the more applied works of~\cite{BenAmar, Grima, Sengupta}, showing pattern formation and the effect on diffusion.

Our goal is to obtain explicit propagation bounds for solutions to~\eqref{eq:the_equation}. When $K \in L^1(\R)$, we obtain a lower bound on the propagation of $2t$. While we believe that this is sharp, obtaining a matching bound is still an open question.  We obtain an upper bound of the form $c^*t$ where $c^*$ is given explicitly by the expression~\eqref{eq:c^*} in Corollary~\ref{cor:c^*} below. After \Cref{thm:localized_advection_bound} we discuss heuristically why we believe that the front is at $2t$ asympotically, and after \Cref{cor:c^*}, we discuss the relationship of our bounds on $c^*$ with known results. When $K(x) \sim x^{-\alpha}$ as $x\to+\infty$ for some $\alpha \in (0,1)$, we show that the front is at $O(t^{1/\alpha})$, and when $\lim_{x\to+\infty} K(x) > 0$, the front moves exponentially.

Throughout the paper, the solutions $u$ are understood as classical $C^{1;2}_{t;x}((0,+\infty)\times\R)$ solutions, with $u(t,\cdot)\to u_0$ as $t\to0^+$ in $L^1(\R)$. As a matter of fact, if $u$ is a continuous, non-negative, bounded solution in $(0,T)\times\R$ for some $T>0$ and if $K$ belongs to $L^1(\R)$, then the general assumptions on $K$ imply that $K*u$ and $(K*u)_x$ belong to $L^{\infty}((0,T)\times\R)$ with
\begin{equation}\label{eq:convolution_bounds_prelim}
\begin{split}
	&\|K*u\|_{L^{\infty}((0,T)\times\R)}\le \|K\|_{L^1(\R)}\|u\|_{L^{\infty}((0,T)\times\R)}
		\qquad \text{ and }\\
	&\|(K*u)_x\|_{L^{\infty}((0,T)\times\R)}\le 2|J|\,\|u\|_{L^{\infty}((0,T)\times\R)}.
\end{split}
\end{equation}
Therefore, standard parabolic estimates imply that, for every $\varepsilon\in(0,T)$, $u$ and $u_x$ are H\"older continuous in $(\varepsilon,T)\times\R$ and then that $u$ is a classical solution in $(0,T)\times\R$, with $u$, $u_t$, $u_x$ and $u_{xx}$ bounded and H\"older continuous in $(\varepsilon,T)\times\R$ (see also Section~\ref{sec:well_posed} below in the case when $K$ is only in $L^{\infty}(\R)$). The strong maximum principle then yields $u>0$ in $(0,T)\times\R$ (see also Lemma~\ref{lemmax} below). The same conclusions hold in $(0,+\infty)\times\R$ if $u$ is assumed to be continuous and bounded in $(0,+\infty)\times\R$. We note that, with the positivity of $u$ established, we may optimize the estimates in~\eqref{eq:convolution_bounds_prelim}
\begin{equation}\label{eq:convolution_bounds}
\begin{split}
	&\|K*u\|_{L^{\infty}((0,T)\times\R)}\le \frac{1}{2}\|K\|_{L^1(\R)}\|u\|_{L^{\infty}((0,T)\times\R)}
		\qquad \text{ and }\\
	&\|(K*u)_x\|_{L^{\infty}((0,T)\times\R)}\le |J|\,\|u\|_{L^{\infty}((0,T)\times\R)}.
\end{split}
\end{equation}
To obtain~\eqref{eq:convolution_bounds} we use explicitly the cancellations due to the monotonicity and positivity properties of $K$ and $u$ in order to pick up an extra factor of $1/2$ in both terms.

Lastly, for a function $g:E\to\R^k$ defined on a subset $E$ of $\R^m$ and for $p\in[1,\infty]$, we denote $\|g\|_p$ the $L^p(E)$ norm of the Euclidean norm $|g|$ of $g$. For instance, if $u$ is a bounded solution of~\eqref{eq:the_equation}, then $\|u\|_{\infty}$ stands for $\|u\|_{L^{\infty}((0,+\infty)\times\R)}$ while $\|u_0\|_{\infty}$ stands for $\|u_0\|_{L^{\infty}(\R)}$. When there is ambiguity, we write the domain explicitly.


\subsection{The effect of $L^1$ kernels}

Our first main result discusses the case when $K\in L^1(\R)$ as well as the particular case when $K = \overline{K}'$ for some kernel $\overline K \in W^{1,1}(\R)$, meaning that $K$ converges to $0$ fast enough at $\pm\infty$ (we recall that $K$ is always assumed to be odd and monotonic except at $0$). In this case, we have the following result.

\begin{theorem}\label{thm:finite_speed}
Suppose that $K\in L^1(\R)$ is odd and monotonic except at~$0$ and that $u$ is a bounded classical solution of~\eqref{eq:the_equation} with non-negative and non-zero compactly supported initial data $u_0$ satisfying~\eqref{hypu0}. Then $u>0$ in $(0,+\infty)\times\R$ and the following holds:
\begin{enumerate}[label=(\roman*)]
		\item for every $c\in (0,2)$, there exists $\delta>0$ depending only on $c$, $K$ and $\|u\|_{\infty}$ such that
\begin{equation}\label{liminfdelta1}
\liminf_{t\to+\infty} \inf_{|x| < ct} u(t,x)\geq \delta;
\end{equation}
in addition, if $K$ is compactly supported, then
\begin{equation}\label{liminfdelta2}
\liminf_{t\to+\infty} \inf_{|x| < 2t - (3/2)\log(t)} u(t,x)>0;
\end{equation}
		\item if $K=\overline{K}'$ with $\overline{K}\in W^{1,1}(\R)$, then there exists $c^*>0$, given explicitly in~\eqref{eq:c^*} below, such that
\begin{equation}\label{limsup0}
\forall\,c>c^*,\quad\limsup_{t\to+\infty} \sup_{|x| > ct} u(t,x)= 0.
\end{equation}
\end{enumerate}
\end{theorem}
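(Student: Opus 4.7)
My plan is to approach the two parts separately using sub- and super-solutions that exploit the uniform bounds~\eqref{eq:convolution_bounds} on $K*u$ and $(K*u)_x$. For part (i), I would first rewrite~\eqref{eq:the_equation} in the non-conservative form
\[
 u_t - u_{xx} + (K*u)\, u_x \;=\; u\bigl(1 - u - (K*u)_x\bigr),
\]
which is a linear parabolic equation in $u$ with bounded, locally H\"older continuous coefficients; positivity $u>0$ in $(0,+\infty)\times\R$ then follows directly from the strong maximum principle. To obtain the basic lower bound~\eqref{liminfdelta1} for a fixed $c \in (0,2)$, my plan is to establish and iterate a \emph{local spreading lemma}: there exist $T, R_0, \delta_0 > 0$, depending only on $c$, $K$ and $\|u\|_\infty$, such that whenever $u(t_0,\cdot) \geq \delta_0$ on an interval of length at least $R_0$, then $u(t_0+T,\cdot) \geq \delta_0$ on that interval dilated by $cT$ on each side. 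The lemma would be proved by comparison with a moving, compactly supported subsolution built from a principal Dirichlet eigenfunction of the conjugated elliptic operator on $[-R_0, R_0]$; the bounds~\eqref{eq:convolution_bounds} on $K*u$ and $(K*u)_x$ are used to dominate the nonlocal perturbation, and the choice $c<2$ ensures the principal eigenvalue lies below the effective reaction for $R_0$ large and $\delta_0$ small. Iterating the lemma starting from an initial localization of $u$ then yields~\eqref{liminfdelta1}.

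For the refined bound~\eqref{liminfdelta2} with $K$ compactly supported, I would combine (i) with the classical Bramson logarithmic correction for Fisher-KPP. Outside a sufficiently wide neighborhood of the bulk $\{u \gtrsim 1/2\}$, the convolution $K*u$ vanishes identically, so~\eqref{eq:the_equation} reduces exactly to the local Fisher-KPP equation $u_t = u_{xx} + u(1-u)$; in that outer region one can insert a classical Bramson subsolution of the form $\delta\,(\xi)_+\,e^{-\xi}\,\chi\bigl(\xi/\sqrt t\bigr)$, with $\xi := x - 2t + (3/2)\log t$, and splice it to the bulk lower bound from~(i). The main technical step here is verifying the subsolution inequality across the gluing interface, where $K*u$ is not identically zero but is small and tightly controlled by the spreading already established.

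For part (ii), my plan is to construct a global exponential supersolution
\[
 \bar u(t,x) \;:=\; \min\!\bigl(M,\; A\, e^{-\lambda(x - c^* t)}\bigr),
\]
with $M \geq \|u\|_\infty$ and $\lambda, A > 0$ to be chosen. Setting $V := \overline K * \bar u$, so that $V_x = K*\bar u$ and $V_{xx}$ is a bounded function (using $\overline K \in W^{1,1}$ together with the monotonicity and sign of $K$ on each half-line to control the singular part of $K'$), the equation rewrites as
\[
 \bar u_t - \bar u_{xx} + V_x\,\bar u_x + V_{xx}\,\bar u \;\geq\; \bar u\,(1 - \bar u).
\]
Substituting the exponential ansatz reduces this inequality to a dispersion relation in $c$, $\lambda$ and $\bar u$, whose infimum over admissible $\lambda > 0$ is, by definition, the explicit value $c^*$ of~\eqref{eq:c^*}. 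The main obstacle is to verify the supersolution inequality simultaneously in the exponential tail (where linearization gives the classical KPP condition $c \geq \lambda + 1/\lambda$ perturbed by a nonlocal correction) and in the bulk region $\{\bar u = M\}$ (where the quadratic $-\bar u^2$ reaction has to absorb the nonlocal contribution). The truncation at $M$ is crucial here, since it keeps $V$ and its derivatives bounded without requiring Laplace-transform convergence of $K$ at positive $\lambda$.
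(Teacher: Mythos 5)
Your proposal has genuine gaps in both parts. For part (i), the central difficulty is that the uniform bounds~\eqref{eq:convolution_bounds} are \emph{not small}: $\|K*u\|_\infty\le\|K\|_1\|u\|_\infty/2$ and $\|(K*u)_x\|_\infty\le|J|\,\|u\|_\infty$ can be arbitrarily large, and the theorem makes no smallness assumption on $K$. If you build a moving Dirichlet-eigenfunction subsolution and dominate the nonlocal terms only through these uniform bounds, the zeroth-order term $-(K*u)_x u$ degrades the effective reaction rate from $1$ to $1-|J|\,\|u\|_\infty$ (possibly negative) and the bounded drift costs an additional $\|K\|_1\|u\|_\infty/2$ in speed; at best you recover a speed like $2\sqrt{1-|J|\,\|u\|_\infty}-\|K\|_1\|u\|_\infty/2$, never the full speed $2$ claimed in~\eqref{liminfdelta1}, and nothing at all for large $K$. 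The missing idea is that \emph{ahead of the front $u$ is small, and therefore $K*u$ and $(K*u)_x$ are small there}; the paper quantifies this through a local-in-time Harnack inequality ($|K*u|(t,x)\lesssim u(t,x)^{1/p}$ plus a gradient version, Lemma~\ref{lem:harnack}) and implements it by comparing $u$ with a subsolution whose advection is \emph{cut off} at a small threshold $\eta$, the Harnack bound guaranteeing that at any possible touching point the cutoff is inactive so the maximum principle applies. Relatedly, your claim for~\eqref{liminfdelta2} that $K*u$ ``vanishes identically'' outside a neighborhood of the bulk is false: $u>0$ everywhere, so compact support of $K$ only makes $K*u(x)$ depend on the values of $u$ near $x$ (it is small ahead of the front, not zero), and the equation never reduces exactly to local Fisher-KPP. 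The paper instead uses the gradient Harnack bound to absorb $((K*u)u)_x$ into the reaction as $u(1-u-Cu^{2/3}-C'u^{1/3})$ and then invokes G\"artner's result for the $(3/2)\log t$ correction; a direct Bramson splicing as you sketch would still have to control the nonlocal terms pointwise in terms of $u$, which is exactly the Harnack input you are missing.

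For part (ii), the equation has no comparison principle in its nonlocal term, so you cannot verify a supersolution in which the kernel acts on $\bar u$ itself: setting $V:=\overline K*\bar u$ and checking $\bar u_t-\bar u_{xx}+V_x\bar u_x+V_{xx}\bar u\ge\bar u(1-\bar u)$ does not compare $\bar u$ with $u$, because $u$ solves the equation with drift $K*u$, not $K*\bar u$. The correct structure (as in the paper) is to freeze the drift $v=K*u$ of the actual solution as a given coefficient satisfying~\eqref{boundsv}, dominate $u$ by the solution $\overline u$ of the \emph{linear} problem $\overline u_t+((K*u)\overline u)_x=\overline u_{xx}+\overline u$, and estimate its fundamental solution. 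Moreover, even after this correction, an exponential ansatz only sees $V_x$ and $V_{xx}$ in the dispersion relation, so it cannot produce the second and third terms of~\eqref{eq:c^*}, which involve $\|\overline K\|_1\|u\|_\infty$, i.e.\ the bound $A_0$ on $V$ itself; those terms come from the Fabes--Stroock-type weighted heat-kernel estimate of Proposition~\ref{prop:heat_kernel} (where $A_0$ enters through the $\alpha^2A_0^2$ contributions in the $L^2$ energy estimate), combined with the Fokker--Planck tail bound of Proposition~\ref{prop:fokker_planck} for the term $2+\|K\|_1\|u\|_\infty/2$. Your assertion that the infimum of the exponential dispersion relation ``is, by definition, $c^*$'' is therefore unjustified: at best such an ansatz yields a bound of the form $2\sqrt{1+|J|\,\|u\|_\infty}+\|K\|_1\|u\|_\infty/2$, which is not the minimum in~\eqref{eq:c^*}. (The positivity statement and the non-conservative rewriting at the start of your part (i) do match the paper.)
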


We make a few observations about \Cref{thm:finite_speed}.  Firstly, we point out that the well-posedness of similar models have been handled extensively, see e.g.~\cite{NadinPerthameRyzhik,SalakoShen1, SalakoShen2,Tello, TelloWinkler}.  When $J < 1$ in \Cref{thm:finite_speed}, a maximum principle argument gives immediately that
$$\|u\|_\infty \leq \max\{1,(1-J)^{-1}\},$$
see Lemma~\ref{lemmax} below. While this relies on the fact that $\|u_0\|_\infty \leq 1$, we note that the condition $\|u_0\|_\infty \leq 1$ is not restrictive; indeed, if $\|u_0\|_\infty>1$, it is straightforward to replace the above with $$\limsup_{s\to+\infty}\|u\|_{L^{\infty}((s,+\infty)\times\R)}\le\max\{1,(1-J)^{-1}\}.$$
Uniform bounds in $L^\infty$ have been obtained in various other settings.  In particular, solutions to the Keller-Segel-Fisher model in $\R$ are always uniformly bounded in $L^\infty$.  We also expect a similar result to hold for the general kernels considered in this paper, but as this is not the focus of our study, we do not address it here.

Secondly, we mention that the lower bound~(i) in \Cref{thm:finite_speed} does not require the monotonicity of $K$.  Indeed, it requires only that $K \in L^1(\R)$ and that $u$ is bounded in $(0,+\infty)\times\R$.

Thirdly, we mention that the upper bound~\eqref{limsup0} in \Cref{thm:finite_speed}-(ii) does not require the assumption that $K = \overline K'$ for some $\overline K\in W^{1,1}(\R)$.  If $K$ were merely $L^1$ our proof still provides an upper bound; however, the expression characterizing $c^*$ (see~\eqref{eq:c^*} below) is the minimum of three terms, two of which depend on $\|\overline K\|_1$.  Hence, in many cases, we obtain better bounds on $c^*$ by considering this more specific setting.  We note that the choice of $K$ in the Keller-Segel-Fisher model satisfies this condition; indeed, $K(x) = - \chi \sign(x) e^{-|x|/\sqrt d}/(2d) = (d/dx)(\chi e^{-|x|/\sqrt d}/(2\sqrt d)$.

Finally, when $J < 1/2$, we may apply arguments from~\cite{SalakoShen1, SalakoShen2} to bootstrap our lower bound to give uniform convergence to~$1$ (that is,~\eqref{liminfdelta1} may be replaced by $\lim_{t\to\infty} \sup_{|x|< ct}|u(t,x) - 1|= 0$ and a similar, though more difficult to state, result may replace~\eqref{liminfdelta2}). This is handled in \Cref{cor:convergence_to_1} below.

\subsubsection*{A general result for the lower bounds in Theorem~\ref{thm:finite_speed}}

In order to prove \Cref{thm:finite_speed}, we prove some general results, the first of which is a lower bound in $\R^n$ that shows that a ``localized'' non-local advection term cannot slow down propagation. In the sequel, for any $x\in\R^n$ and $r>0$, we denote $B_r(x)$ the open Euclidean ball with center $x$ and radius $r$.

\begin{theorem}\label{thm:localized_advection_bound}
Let $F: C(\R^n)\cap L^{\infty}(\R^n) \to (C^1(\R^n)\cap W^{1,\infty}(\R^n))^n$ have the property that, for any $R > 0$ sufficiently large, there exist two constants $C_R>0$ and $\epsilon_R>0$, such that $\lim_{R\to+\infty}\epsilon_R=0$ and
\begin{equation}\label{hypF}
\|F(v)\|_{L^\infty(B_R(x_0))} + \|\nabla \cdot F(v)\|_{L^\infty(B_R(x_0))}\leq C_{R} \|v\|_{L^\infty(B_{2R}(x_0))} + \epsilon_R\|v\|_{L^{\infty}(\R^n)}
\end{equation}
for all $v\in C(\R^n)\cap L^{\infty}(\R^n)$ and $x_0\in\R^n$. We also assume that $F$ is locally H\"older continuous, in the sense that, for every $M>0$, there exist $\alpha\in(0,1)$ and $C>0$ such that $\|F(v)-F(w)\|_{(W^{1,\infty}(\R^n))^n}\le C\|v-w\|_{L^\infty(\R^n)}^\alpha$ for all $v,\,w\in C(\R^n)\cap L^{\infty}(\R^n)$ with $\|v\|_{L^{\infty}(\R^n)}\le M$ and $\|w\|_{L^{\infty}(\R^n)}\le M$. Let $u:(0,+\infty)\times\R^n\to\R$ be a non-negative, non-zero, bounded classical solution of
\begin{equation}\label{eqF}
u_t + \nabla \cdot (F(u(t,\cdot))\,u) = \Delta u + u(1-u), \quad\text{ in } (0,+\infty)\times \R^n.
\end{equation}
Then, for every $c\in(0,2)$, there exists $\delta>0$ depending only on $c$, $F$, and $\|u\|_{\infty}$ such that
\begin{equation}\label{propagation1}
\liminf_{t\to+\infty} \inf_{|x| < ct} u(t,x)\geq \delta.
\end{equation}
Furthermore, if the assumption~\eqref{hypF} holds with $\epsilon_R = 0$, then
\begin{equation}\label{propagation2}
\liminf_{t\to+\infty} \inf_{|x| < 2t - (n+1/2)\log(t)} u(t,x)>0.
\end{equation}
\end{theorem}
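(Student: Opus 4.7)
The plan is to adapt the classical KPP propagation argument (Aronson--Weinberger for part~(i), Bramson for part~(ii)) to our setting by using the localization hypothesis~\eqref{hypF} to show that where $u$ is small, both the drift $F(u)$ and its divergence $\nabla \cdot F(u)$ are small, so that~\eqref{eqF} essentially behaves like the standard KPP equation near the front. Writing the equation in non-divergence form as
\[
u_t + F(u) \cdot \nabla u - \Delta u = u\bigl(1 - u - \nabla \cdot F(u)\bigr),
\]
the coefficients $F(u)$ and $\nabla \cdot F(u)$ are bounded in $L^\infty$ by~\eqref{hypF} and the boundedness of $u$, so the strong maximum principle gives $u > 0$ in $(0,+\infty)\times\R^n$, and a parabolic Harnack inequality produces constants $t_0, r_0, \eta_0 > 0$ such that $u(t_0, x) \geq \eta_0$ for all $|x| \leq r_0$.

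For part~(i), fix $c \in (0,2)$ and pick $c' \in (c,2)$. Choose $R > 0$ large so that $\epsilon_R$ is small compared to $2 - (c')^2/4$. In a ball of radius $R$ around a moving center $x^\ast + v\,t$ with $|v| = c'$, the estimate~\eqref{hypF} gives
\[
\|F(u)\|_{L^\infty(B_R)} + \|\nabla \cdot F(u)\|_{L^\infty(B_R)} \leq C_R \|u\|_{L^\infty(B_{2R})} + \epsilon_R \|u\|_\infty.
\]
I would then construct a compactly supported sub-solution of the form $\underline u(t,x) = \kappa\,\phi(x - x^\ast - v t)$, with $\phi \geq 0$ a smooth radial bump supported in $B_R$ satisfying $\Delta \phi - v \cdot \nabla \phi + (1 - \mu)\phi \geq 0$ for some $\mu > 0$ (possible because $|v| < 2$), and with $\kappa$ small enough that the perturbation of size $\kappa C_R + \epsilon_R \|u\|_\infty$ coming from the drift and the reaction correction is absorbed into the slack $\mu$. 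Iterating this construction over a sequence of time steps of length $T$, chaining the sub-solutions via the Harnack inequality to restore the amplitude $\kappa$ at each step, one propagates the lower bound from the first step in every direction at speed arbitrarily close to $c'$, and hence at speed $c$, which yields~\eqref{propagation1}.

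For part~(ii), the assumption $\epsilon_R = 0$ means the drift and its divergence are strictly controlled by the local magnitude of $u$. This allows a more refined sub-solution of Bramson--Roberts type, supported in a frame moving at speed $2 - (n+1/2)\log(t)/t$ and built from a truncated Gaussian; the coefficient $n + 1/2$ combines the $n/2$ factor from the spreading of the $n$-dimensional heat kernel with the classical $1/2$ factor from the absorbing-boundary correction in Bramson's argument. The main obstacle throughout is to calibrate the radius $R$ and the amplitude $\kappa$ so that the non-local advection remains a small perturbation of the KPP dynamics uniformly in time: when $\epsilon_R > 0$, the global tail $\epsilon_R \|u\|_\infty$ can a priori spoil the sharp logarithmic correction, which is why part~(ii) requires the stronger hypothesis $\epsilon_R = 0$; the delicate point in part~(i) is that the budget $2 - (c')^2/4$ available for absorbing the perturbation must be reserved strictly away from $c = 2$, preventing any direct argument up to the critical speed.
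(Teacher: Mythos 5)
There is a genuine gap, and it is exactly at the point you pass over in one clause: ``where $u$ is small, both the drift $F(u)$ and its divergence $\nabla\cdot F(u)$ are small.'' This does not follow from~\eqref{hypF}, which bounds $F(u(t,\cdot))$ at a point $x_0$ by $\|u(t,\cdot)\|_{L^\infty(B_{2R}(x_0))}+\epsilon_R\|u\|_\infty$, not by $u(t,x_0)$: a point where $u$ is tiny may sit within distance $2R$ of a region where $u$ is of order one, and then \eqref{hypF} gives no smallness at all. The paper's proof spends a key lemma precisely on repairing this: the local-in-time Harnack inequality (Lemma~\ref{lem:harnack}) yields $u(t,x)\le C\,u(t,y)^{1/p}\max\{\|u\|_\infty^{1-1/p},\|u\|_\infty\}$ for $|x-y|\le R$, which converts pointwise smallness of $u$ into smallness of $\|u\|_{L^\infty(B_{2R}(x_0))}$ and hence, via \eqref{hypF}, into pointwise bounds $|\nabla\cdot F(u(t,\cdot))|\le \tilde\epsilon_R+C_Ru^{2/3}$ and $|F(u(t,\cdot))|\le C'\sqrt{u}+\eta/2$. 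Without an estimate of this type your assertion is unsupported, and with no pointwise control in terms of $u$ the rest of the plan collapses.

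Even granting that claim, your moving-bump comparison does not close. The bump $\kappa\phi(x-x^\ast-vt)$ must sweep through the region $|x|<ct$, where $u$ (hence $\|u\|_{L^\infty(B_{2R})}$, hence the admissible size of the drift and of $\nabla\cdot F(u)$ permitted by \eqref{hypF}) is of order $\|u\|_\infty$, not of order $\kappa$; after dividing the sub-solution inequality by $\kappa$, the perturbation is $|F(u)\cdot\nabla\phi|+|\nabla\cdot F(u)|\,\phi\lesssim C_R\|u\|_\infty+\epsilon_R\|u\|_\infty$, which is not multiplied by $\kappa$ and cannot be absorbed into the slack $2-(c')^2/4$, however small $\kappa$ is. The paper circumvents exactly this obstruction with the advection cut-off $A(t,x)=F(u(t,\cdot))(x)/\max\{1,|F(u(t,\cdot))(x)|\eta^{-1}\}$: the auxiliary function $\underline u$ solves the cut-off equation globally (so that \cite{BerestyckiHamelNadin} yields spreading at speed close to $2$ since $|A|\le\eta$ and the modified reaction has derivative close to $1$ at $0$), while the comparison $u\ge\underline u$ only requires the two equations to coincide near a touching point, where $u$ is necessarily small and Harnack plus \eqref{hypF} show the cut-off is inactive. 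For part~(ii) the same issue recurs: your Bramson-type sub-solution leaves the transport term $F(u)\cdot\nabla u$ untreated; the paper handles it with the gradient half of Lemma~\ref{lem:harnack}, $|\nabla u|\lesssim u^{1/p}$, which gives $|F(u)\cdot\nabla u|\le C_R'\,u^{4/3}$, so that $u$ becomes a super-solution of a local equation $v_t=\Delta v+v(1-v-C_Rv^{2/3}-C_R'v^{1/3})$ with $f'(0)=1$, to which the known result of \cite{Gartner} applies directly (no hand-built Bramson sub-solution is needed, and your heuristic bookkeeping for the constant $n+1/2$ is not a proof). In short, the missing ideas are the Harnack-type conversion of nonlocal bounds into pointwise bounds in $u$, and a comparison mechanism (the cut-off) that only requires drift smallness where $u$ is small.
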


We expect \Cref{thm:localized_advection_bound} and its technique to be useful in attacking other problems where the advection is non-local. We believe that, in the positive chemotaxis setting, i.e.~when $K$ is increasing except at $x=0$, $2$ is the sharp propagation speed.  The reason being that, for $x > 0$, at the front and beyond $u$ is, on average, decreasing, in which case $K*u$ is, on average, negative and so should {\em slow} down the front compared to the same model without advection.  Since the Fisher-KPP equation moves at speed $2$ without advection, we expect propagation in our model to be {\em no faster than} $2$.  In view of~\Cref{thm:localized_advection_bound}, we expect $2$ to be the sharp rate.

Our proof of \Cref{thm:localized_advection_bound} hinges on two novel observations.  First, when $\epsilon_R > 0$, we build a small function $\underline u$ satisfying a similar equation to $u$ but where the advection $F(u(t,\cdot))(x)$ is normalized whenever $|F(u(t,\cdot))(x)|$ is not small.  We may then apply the results of~\cite{BerestyckiHamelNadin} to characterize the propagation of $\underline u$.  On the other hand, when $u(t,x)$ is small enough that $u$ and~$\overline u$ could possibly ``touch'', we use a local-in-time Harnack inequality (see~\cite[Theorem~1.2]{BHR_LogDelay} and see also Lemma~\ref{lem:harnack} below) along with the inequality that $F$ satisfies to show that $F(u(t,\cdot))(x)$ must be small enough to be below the normalization cut-off.  Hence, at this point $u$ and $\underline u$ satisfy the same equation and so the maximum principle rules out this ``touching,'' thus preserving the ordering of $u$ and $\underline u$. As a result, the front of $\underline u$, which travels at speed $2 - \epsilon$, must sit behind the front of $u$, providing the lower bound~\eqref{propagation1}.

The second observation is that, when $\epsilon_R = 0$, we may generalize the local-in-time Harnack inequality \cite[Theorem~1.2]{BHR_LogDelay} to obtain a bound on $|\nabla u(t,x)|$ by a term involving $u(t,x)^{1/p}$ for any $p>1$.  This allows us to absorb the term $\nabla \cdot(F(u(t,\cdot))u)$ into the reaction term so that $u$ is the super-solution to an equation of the form $\underline u_t = \Delta \underline u + f(\underline u)$.  The propagation of $\underline u$ is well-understood and it leads to the desired lower bound~\eqref{propagation2} for $u$.  The improved local-in-time Harnack inequality is stated precisely in \Cref{lem:harnack}.

Before discussing the second general result used in the proof of Theorem~\ref{thm:finite_speed}, which gives the upper bound on the propagation speed, we mention other known lower bounds on the propagation speed. First, in~\cite{NadinPerthameRyzhik}, Nadin, Perthame, and Ryzhik construct a travelling wave solution of~\eqref{ks} (that is,~\eqref{eq:the_equation} with the choice $K(x) = -\chi \sign(x) e^{-|x|/\sqrt{d}}/(2d)$) with a speed $c$ such that $c \in [2, 2 + \chi\sqrt{d}/(d-\chi)]$ when $0<\chi < \min\{1,d\}$.  Their lower bound matches ours; however, their strategy, which amounts to understanding the tail of the traveling wave, does not apply to the Cauchy problem.  Indeed, when they investigate the Cauchy problem, they just show that
\[
	K_1
		\leq \liminf_{t\to+\infty}\, \frac{1}{t}\int_\R u(t,x)dx
		\leq \limsup_{t\to+\infty}\, \frac{1}{t}\int_\R u(t,x)dx
		\leq K_2
\]
for some $0< K_1, K_2$, which are not obtained explicitly.  This shows linear-in-time propagation in a weak sense, but it does not provide an explicit bound of the propagation.  Later, Salako and Shen~\cite{SalakoShen1, SalakoShen2} are able to obtain a lower bound on the propagation of the Cauchy problem when $d=1$ and $0<\chi < 2/(3 + \sqrt{2})$ of $2\sqrt{1 - \chi(1-\chi)^{-1}} - \chi(1-\chi)^{-1}$.  In both cases, our bound for the propagation of the Cauchy problem is an improvement.

\subsubsection*{Some heat kernel estimates for the upper bounds in Theorem~\ref{thm:finite_speed}}

The second general result, used in the proof of Theorem~\ref{thm:finite_speed}-(ii), is an upper bound on the heat kernel when the operator takes a similar form to that of~\eqref{eq:the_equation}.

\begin{prop}\label{prop:heat_kernel}
Suppose that $v:(0,+\infty)\times\R\to\R$ is of class $C^{1;2}_{t;x}((0,+\infty)\times\R)$ and satisfies the bounds
\begin{equation}\label{boundsv0}
	\|v\|_\infty \leq A_0, \quad
	\|v_x\|_\infty \leq A_1, \quad \text{ and }\quad
	\|v_{xx}\|_\infty \leq A_2,
\end{equation}
for some $A_0, A_1, A_2 \in[0,+\infty)$.  Consider the fundamental solution $\Gamma(t,s,x,y)$ to the equation
\begin{equation}\label{eqGamma}
\begin{cases}
	\Gamma_t + (v_x \Gamma)_x = \Gamma_{xx}, \qquad &\text{ in } (s,+\infty)\times\R,\\
	\Gamma(t=s,s,x,y) = \delta_y(x),
\end{cases}
\end{equation}
for any $s\ge0$ and $y\in\R$. Then, for every $\delta>0$, there is a constant $C_\delta>0$ depending on $\delta$, such that $\Gamma$ satisfies the upper bound
\begin{equation}\label{eq:kernel_bound}
\begin{split}
	\Gamma(t,s,x,y) \leq \frac{C_\delta}{\sqrt{t\!-\!s}} \exp \Bigg\{\delta(t\!-\!s)\!+\!
		\min\Bigg[&\frac{A_2(t-s)}{2} \!-\! \frac{(x\!-\!y)^2}{4(1+ \delta + A_0^2)(t\!-\!s)},\\
			&\!\inf_{\epsilon\in(0,1)}\!\!\Bigg(\!\!\frac{A_1^2(t\!-\!s)}{4\epsilon}\! -\! \frac{(x\!-\!y)^2}{4\big(1\!+\!\delta\!+\!\frac{A_0^2}{1\!-\!\epsilon}\big)(t\!-\!s)}\Bigg)\Bigg]\Bigg\}
\end{split}
\end{equation}
for every $t>s\ge0$ and $(x,y)\in\R\times\R$.
\end{prop}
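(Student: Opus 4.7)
The plan is to use a weighted $L^2$ estimate of Davies type. Fix $(s, y)$ and, for a parameter $\alpha\in\R$ to be optimized, define $u(t, x) = e^{\alpha(x-y)}\Gamma(t, s, x, y)$, which satisfies
\begin{equation*}
u_t = u_{xx} - (2\alpha + v_x) u_x + (\alpha^2 + \alpha v_x - v_{xx})\,u.
\end{equation*}
Multiplying by $u$ and integrating by parts yields the energy identity
\begin{equation*}
\frac{d}{dt}\|u(t,\cdot)\|_2^2 = -2\|u_x(t,\cdot)\|_2^2 + \int_{\R}(2\alpha^2 + 2\alpha v_x - v_{xx})\,u^2\,dx,
\end{equation*}
while a direct computation gives the $L^1$ bound $\|u(t,\cdot)\|_1 \le e^{(\alpha^2 + |\alpha|A_1)(t-s)}$. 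These two estimates will be combined with Nash's inequality to produce a pointwise bound on $u$, from which~\eqref{eq:kernel_bound} follows by substituting $\Gamma = e^{-\alpha(x-y)}u$ and optimizing in $\alpha$.

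The two entries of the min in~\eqref{eq:kernel_bound} correspond to two different treatments of the right-hand side of the energy identity. For the first entry (the $A_2$-bound), I would bound $-v_{xx}$ pointwise by $A_2$ and integrate the term $2\alpha v_x u^2$ by parts to produce $-4\alpha\int v\,u\,u_x\,dx$; Cauchy--Schwarz, using only $\|v\|_\infty \le A_0$, then yields a contribution of the form $\varepsilon_0\|u_x\|_2^2 + 4\alpha^2 A_0^2 \varepsilon_0^{-1}\|u\|_2^2$ for any $\varepsilon_0 \in (0,2)$, and the choice $\varepsilon_0 = 2A_0^2/(\delta+A_0^2)$ produces an effective coefficient on $\|u\|_2^2$ equal to $2\alpha^2(1+\delta+A_0^2) + A_2$ together with a still-positive coefficient on $\|u_x\|_2^2$. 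For the second entry, I would instead integrate by parts \emph{both} the $v_{xx}$ and the $2\alpha v_x$ terms and apply Cauchy--Schwarz with two separate parameters $\varepsilon_1, \varepsilon_2$ satisfying $\varepsilon_1+\varepsilon_2 < 2$; tuning $\varepsilon_1 \approx 2\varepsilon$ and $\varepsilon_2 \approx 2(1-\varepsilon)$ yields instead the effective coefficient $2\alpha^2(1+\delta+A_0^2/(1-\varepsilon)) + A_1^2/(2\varepsilon)$.

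Given such an energy estimate of the form $\frac{d}{dt}\|u\|_2^2 \le -c_1\|u_x\|_2^2 + M(\alpha)\|u\|_2^2$ with $c_1 > 0$, Nash's one-dimensional inequality $\|f\|_2^3 \le C\|f\|_1^2\|f_x\|_2$, combined with the $L^1$ bound on $u$, yields the on-diagonal estimate $\|u(t,\cdot)\|_2^2 \le C_\delta(t-s)^{-1/2}\,e^{M(\alpha)(t-s)}$ via the standard Nash ODE argument (setting $H(\tau) = \|u(s+\tau,\cdot)\|_2^2\,e^{-M(\alpha)\tau}$, using $H(0) = +\infty$ to discard an initial term, and integrating the resulting differential inequality). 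A subsequent $L^2\to L^\infty$ step---obtained by applying Cauchy--Schwarz in $z$ to the semigroup identity $\Gamma(t, s, x, y) = \int \Gamma(t, \tfrac{s+t}{2}, x, z)\,\Gamma(\tfrac{s+t}{2}, s, z, y)\,dz$ together with the analogous bound for the adjoint kernel (whose evolution has the same structural form, since $\|-v\|_\infty = \|v\|_\infty$)---upgrades this to $\|u(t,\cdot)\|_\infty \le C_\delta(t-s)^{-1/2}\,e^{M(\alpha)(t-s)/2}$. Substituting $\Gamma(t,s,x,y) \le e^{-\alpha(x-y)}\|u(t,\cdot)\|_\infty$ and minimizing over $\alpha$ (the optimal choice being $\alpha = (x-y)/(2(1+\delta+A_0^2)(t-s))$ in the first case, with the obvious analogue in the second case) produces the two entries of the min; the subleading terms linear in $\alpha$ (such as the $|\alpha|A_1$ contribution from the $L^1$ bound) are absorbed via AM--GM into the $\delta(t-s)$ slack and the prefactor $C_\delta$.

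The main obstacle is the delicate bookkeeping in the energy estimate: after the integration by parts and Cauchy--Schwarz, one must simultaneously expose $\|v\|_\infty = A_0$ (and not $\|v_x\|_\infty$) in the coefficient of $\alpha^2$, and retain enough positivity in the $\|u_x\|_2^2$ term to apply Nash's inequality. The parameter $\delta > 0$ is precisely what allows both goals to be achieved with the sharp constants $1+\delta+A_0^2$ and $1+\delta+A_0^2/(1-\varepsilon)$, at the cost of a prefactor $C_\delta$ that necessarily blows up as $\delta \to 0^+$.
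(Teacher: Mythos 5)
Your weighted-$L^2$ bookkeeping is sound and in fact reproduces the paper's sharp $L^2$ step almost verbatim (the two splittings giving $2\alpha^2(1+\delta+A_0^2)+A_2$ and $2\alpha^2(1+\delta+A_0^2/(1-\epsilon))+A_1^2/(2\epsilon)$, and the duality/semigroup-splitting at the end). The genuine gap is in the single Nash step that is supposed to give the on-diagonal bound $\|u(t,\cdot)\|_2^2\le C_\delta (t-s)^{-1/2}e^{M(\alpha)(t-s)}$. Because the twisted kernel starts from a Dirac mass, the Nash ODE argument must feed the $L^1$ bound $\|u(t,\cdot)\|_1\le e^{L(t-s)}$, $L=\alpha^2+|\alpha|A_1$, into the dissipation term; integrating $H'\le -cH^3e^{(2M-4L)\tau}$ then yields the rate $\max\{M,2L\}$ (equivalently, rate $M$ but with an extra factor of order $\sqrt{1+(4L-2M)_+\tau}$), not $M$. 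The excess $(2L-M)_+$ is strictly positive in admissible regimes: e.g.\ for $v$ close to the Landau extremal with $A_0=1/2$, $A_1=A_2=1$ and $|\alpha|\approx A_1/(2(\delta+A_0^2))\approx 2$, one has $2L-M\approx 1$. This excess cannot be ``absorbed via AM--GM into the $\delta(t-s)$ slack and the prefactor'': AM--GM turns $2|\alpha|A_1(t-s)$ into $2\delta\alpha^2(t-s)+A_1^2(t-s)/(2\delta)$, and the second piece is an exponential-in-$(t-s)$ loss depending on $A_1$ which the \emph{first} entry of the min (which permits only $A_2(t-s)/2$ and the Gaussian constant $1+\delta+A_0^2$) does not tolerate; trading it via $A_1^2\le 2A_0A_2$ instead shifts the Gaussian constant by a multiple of $A_0$ (not $A_0^2$) or adds an unbounded multiple of $A_2(t-s)$, again outside the claimed form for every $\delta$ with $C_\delta$ depending only on $\delta$. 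A second, related defect: to run Nash you must retain a positive multiple $c_1$ of $\|u_x\|_2^2$, and pushing the energy constants to the claimed sharp values forces $c_1\sim \delta(1-\epsilon)^2/(\delta(1-\epsilon)+A_0^2)$; your prefactor then scales like $c_1^{-1/2}$, so it depends on $A_0$ and blows up as $\epsilon\to1$, which prevents taking $\inf_{\epsilon\in(0,1)}$ inside the exponent with a single constant $C_\delta$ as the statement requires.

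This is precisely the difficulty the paper's Fabes--Stroock argument is built to avoid: there the sharp constants enter only through a pure $L^2$ growth estimate for solutions with compactly supported data (so the entire gradient term may be spent, no positivity of $c_1$ is needed, and no $L^1$ norm of the twisted kernel is ever tracked), while the $L^2\to L^\infty$ smoothing is done by a Moser-type iteration over $L^{2^k}$ norms whose crude constants appear in the final exponent only multiplied by $\delta$ (the $\delta\tilde R(\alpha^2+1)$ term), and $L^1\to L^\infty$ is recovered by duality after splitting the time interval. To salvage your Davies/Nash-style route you would have to decouple in the same way---e.g.\ prove the $1\to2$ and $2\to\infty$ bounds for the twisted semigroup with crude, $\delta$-weighted constants and inject the sharp $L^2$ growth separately---rather than extract both sharpness and smoothing from a single differential inequality.
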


The proof of the upper bound~\eqref{eq:kernel_bound} in~\Cref{prop:heat_kernel} follows the general outline of Fabes and Stroock's~\cite{FabesStroock} proof for heat kernel estimates in $\R^n$ for second order parabolic equations without first and zeroth order terms.  Their proof involves looking at exponentially weighted solutions to the equation and obtaining a general $L^{2^{k+1}}$ estimate in terms of the $L^{2^k}$ norm for all $k\in\N$.  Taking $k$ to infinity yields an $L^\infty$ bound dependent on the growth of the $L^2$ norm, which comes from the estimate when $k = 1$.  Here, we observe that the rate of exponential decay of the eventual estimate comes {\em only} from the $L^2$ estimate.  As such, we optimize this estimate in a way which we may not necessarily do for the general $L^{2^k}$ estimate.  This allows us to obtain estimates which take advantage of the three norms of $v$ and obtain spatial decay which is better than the na\"ive bound.  We note that, while the above estimate is, in general, not sharp, it is quite general.

The last general result gives a bound on the tails of solutions.  As we observe below, it translates to the na\"ive bound on the propagation of the sum of the speeds of propagation for homogeneous Fisher-KPP and of advection, that is $c^* \leq 2 +\|K*u\|_\infty$.  Beyond this, it is useful in many places throughout this manuscript, especially in the setting where $K \notin L^1(\R)$.

\begin{prop}\label{prop:fokker_planck}
	Suppose that $T>0$, $u \in C^{1;2}_{t;x}((0,T]\times \R)$, $v \in C^{0;1}_{t;x}((0,T]\times\R)$, and
	\begin{equation}\label{eq:kfp}
	\begin{cases}
		u_t + (v u)_x = u_{xx} \qquad &\text{ in } (0,T]\times \R,\\
		u(0,x) = u_0(x) \geq 0 &\text{ in } \R,
	\end{cases}
	\end{equation}
	where $u_0 \in L^\infty(\R)$ is non-negative, non-zero, and satisfies $\supp u_0 \subset [-a,a]$ for some $a > 0$. Suppose, further, that $v$ satisfies
	\[
		\sup_{t\in(0,T]}\,\|v(t,\cdot)\|_{L^\infty(\R)}  \leq A,
	\]
	for some $A \in(0,+\infty)$.  Then, for all $|x| \geq A\,T + a + 1$, 
	\[
		u(T,x) \leq \frac{a}{\sqrt{\pi}}\left(\frac{1}{\sqrt{T}} + \frac{A\,\sqrt{T}}{|x|-A\,T-a}\right)e^{-\frac{(|x|-A\,T-a)^2}{4T}}.
	\]
\end{prop}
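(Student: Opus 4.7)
The plan is to eliminate the uncontrolled term $v_x$ by passing to the cumulative tail mass $U(t,x) := \int_x^{+\infty} u(t,y)\,dy$. A single integration of the defining equation over $(x,+\infty)$ (using that $u$ and $u_x$ decay at $+\infty$) gives
\[
U_t + v U_x = U_{xx} \quad \text{on } (0,T]\times\R,
\]
in which only the pointwise bound $|v|\le A$ plays a role — there is no $v_x$. The initial data satisfies $0 \le U(0,\cdot) \le \|u_0\|_1 \le 2a\|u_0\|_\infty$ and $U(0,x) = 0$ for $x\ge a$. This is the key structural observation.

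To control $U$ I would construct the supersolution $\bar U(t,x) := a\|u_0\|_\infty\,\mathrm{erfc}\!\bigl(\tfrac{x-At-a}{2\sqrt t}\bigr)$. Since $\mathrm{erfc}(\cdot/(2\sqrt t))$ satisfies the heat equation, its shifted version satisfies $\bar U_t - A\bar U_x - \bar U_{xx} = 0$, whence
\[
\bar U_t + v\bar U_x - \bar U_{xx} = (v-A)\bar U_x \ge 0
\]
thanks to $v\le A$ and $\bar U_x\le 0$. After verifying $\bar U(0^+,\cdot) \ge U(0,\cdot)$ (using $\mathrm{erfc}(z)\to 2\cdot\mathbf{1}_{z<0}$ as $t\to 0^+$), the parabolic comparison principle yields $U \le \bar U$ on $(0,T]\times\R$. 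Combined with the elementary estimate $\mathrm{erfc}(z)\le (z\sqrt\pi)^{-1}e^{-z^2}$ for $z>0$, this gives a Gaussian tail bound on $U(T,\cdot)$.

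To pass from $U$ to a pointwise estimate on $u$, I would use Duhamel's formula against the standard heat kernel $G_0$:
\[
u(T,x) = \int G_0(T,x-y)u_0(y)\,dy \;-\; \int_0^T\!\!\!\int G_0(T-s,x-y)(vu)_y(s,y)\,dy\,ds.
\]
The first term is bounded directly by $\tfrac{a\|u_0\|_\infty}{\sqrt{\pi T}}e^{-(|x|-a)^2/(4T)}$, which upgrades to $\tfrac{a\|u_0\|_\infty}{\sqrt{\pi T}}e^{-(|x|-AT-a)^2/(4T)}$ because $(|x|-a)^2\ge(|x|-AT-a)^2$ whenever $|x|\ge AT+a$, yielding the first summand in the claimed bound. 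For the drift-correction term, I would substitute $u=-U_y$ and integrate by parts in $y$ so that all derivatives fall on $G_0$ rather than on $v$, then invoke the bound on $U$ just proved and carry out explicit Gaussian integration to obtain the second summand $\tfrac{a\|u_0\|_\infty\,A\sqrt T}{\sqrt\pi(|x|-AT-a)}e^{-(|x|-AT-a)^2/(4T)}$. The symmetric treatment with $\int_{-\infty}^x u\,dy$ handles the case $x<0$.

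The main obstacle is the drift-correction estimate in the Duhamel step. The integrations by parts must be arranged so as never to differentiate $v$ (which under the hypotheses is only $C^{0;1}_{t;x}$, with no quantitative bound on $v_x$), while simultaneously being sharp enough to preserve the precise diffusive exponent $-(|x|-AT-a)^2/(4T)$ and the correct prefactor scaling in $T$. The bound on $U$ is exactly what enables this delicate bookkeeping.
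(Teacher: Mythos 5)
Your strategy is genuinely different from the paper's. The paper passes to the adjoint: $\tilde\Gamma(t,s,x,y):=\Gamma(t,s,y,x)$ solves $\partial_t-v\partial_x-\partial_{xx}$, in which $v_x$ no longer appears, so Hill's Gaussian upper bound (Lemma \ref{lem:weak_heat_kernel}) applies and the stated inequality follows in one line by integrating that bound against $u_0\le \ind_{[-a,a]}$. You instead remove $v_x$ by passing to the tail mass $U(t,x)=\int_x^{+\infty}u(t,y)\,dy$, which indeed solves $U_t+vU_x=U_{xx}$ (modulo tacit decay of $u$, $u_x$, $vu$ at $+\infty$ and nonnegativity of $u$, tacit assumptions comparable to the paper's use of the representation formula), and the travelling-$\erfc$ comparison is sound and gives $U(T,x)\le a\|u_0\|_\infty\,\erfc\big(\tfrac{x-AT-a}{2\sqrt T}\big)$; note only the sign slip: the correct identity is $\bar U_t+A\bar U_x-\bar U_{xx}=0$, which is what your displayed inequality actually uses.

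The genuine gap is the final Duhamel step, which you defer as ``delicate bookkeeping'' but which is where the stated inequality must actually be produced, and I do not believe it comes out as claimed. Two concrete problems. First, after one integration by parts the drift correction is bounded by $A\int_0^T\!\!\int \frac{|x-y|}{2(T-s)}G_0(T-s,x-y)\,u(s,y)\,dy\,ds$, and near $s=T$ the $U$-bound gives no smallness of the mass of $u(s,\cdot)$ within distance $O(\sqrt{T-s})$ of $x$ (when $T\gg(|x|-AT-a)^2$ the $\erfc$ tail there is order $1$), so with only $|v|\le A$, $u\ge0$ and the $U$-estimate the time integral is not under control: you would need a pointwise bound on $u(s,\cdot)$ near $x$ at earlier times --- essentially what is being proved --- so a bootstrap in a weighted sup norm, or a Moser-type local estimate (which destroys the constants), is an additional substantive argument, not bookkeeping. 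Second, even granting convergence, the natural accounting misses the stated constant: per unit of mass at distance $r$ from $x$ the drift term contributes $A\int_0^T\frac{r}{2\tau\sqrt{4\pi\tau}}e^{-r^2/4\tau}d\tau=\tfrac A2\erfc\big(\tfrac{r}{2\sqrt T}\big)$, whereas Hill's bound carries the prefactor $\tfrac A4$, which is exactly what integrates to $\tfrac{aA\sqrt T}{\sqrt\pi(|x|-AT-a)}e^{-(|x|-AT-a)^2/(4T)}$; running your scheme with all the mass idealized at the front $As+a$ already yields twice that summand, before adding the contribution of the tail mass beyond $As+a$. So your route plausibly proves an estimate of the same Gaussian form with a larger prefactor --- which would in fact suffice for every use of the proposition in the paper --- but it does not establish the inequality as stated, and the assertion that ``explicit Gaussian integration'' yields exactly the second summand is unsubstantiated; either carry out and exhibit that computation (including the bootstrap near $s=T$), or weaken the constants, or use the adjoint/Hill shortcut.
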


One could prove \Cref{prop:fokker_planck} in a rather straight-forward manner by utilizing the probabilistic interpretation of the equation.  However, the proof of the local-in-time Harnack inequality requires upper and lower heat kernel estimates that match asymptotically as $|x| \to \infty$.  Since we obtain these estimates from a result of Hill~\cite{Hill}, we utilize this result to prove \Cref{prop:fokker_planck} instead of proving \Cref{prop:fokker_planck} directly.

The bound on $c^*$ follows directly from \Cref{prop:heat_kernel,prop:fokker_planck}, along with~\eqref{eq:convolution_bounds}.

\begin{corollary}\label{cor:c^*}
	Under the assumptions of Theorem~$\ref{thm:finite_speed}$, the speed $c^*$ in~\eqref{limsup0} can be given by the following explicit formula:
\begin{equation}\label{eq:c^*}
\begin{split}
	c^*
		= \min\Bigg\{
			&2 + \frac{\|K\|_1\|u\|_\infty}{2},
			2\sqrt{\left(1 + \frac{|J|\|u\|_\infty}{2}\right)\left(1 + \|\overline K\|_1^2 \|u\|_\infty^2 \right)},\\
		&2\inf_{\epsilon \in (0,1)} \sqrt{\left(1 + \frac{\|K\|_1^2 \|u\|_\infty^2}{16\epsilon}\right)\left(1 + \frac{\|\overline{K}\|_1^2 \|u\|_\infty^2}{1-\epsilon}\right)}
		\Bigg\}.
\end{split}
\end{equation}
\end{corollary}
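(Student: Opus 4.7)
My plan is to reduce the nonlinear equation~\eqref{eq:the_equation} to a linear transport-diffusion equation, and then to apply Propositions~\ref{prop:fokker_planck} and~\ref{prop:heat_kernel} with three different parameter choices, one for each entry of the minimum in~\eqref{eq:c^*}. The common initial step is the substitution $\tilde u(t,x) := e^{-t}u(t,x)$: since $u(1-u)\le u$, a direct computation gives
\[
\tilde u_t + ((K*u)\,\tilde u)_x \le \tilde u_{xx} \quad\text{in } (0,+\infty)\times\R,
\]
with $\tilde u(0,\cdot) = u_0$ compactly supported and non-negative. Letting $w$ denote the solution of the corresponding linear equation with drift $K*u$ and initial datum $u_0$, the parabolic maximum principle yields $\tilde u \le w$, and hence $u(T,x) = e^T\tilde u(T,x) \le e^T w(T,x)$. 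To establish~\eqref{limsup0}, it then suffices to show that the Gaussian decay of $w$ beats $e^T$ at $|x|\ge cT$ as $T\to+\infty$.

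For the first entry of~\eqref{eq:c^*}, I apply Proposition~\ref{prop:fokker_planck} directly to $w$ with $v = K*u$ and drift bound $A = \|K*u\|_\infty \le \tfrac12\|K\|_1\|u\|_\infty$ from~\eqref{eq:convolution_bounds}. The proposition produces an upper bound on $w(T,x)$ whose exponential factor is $\exp\{-(|x|-AT-a)^2/(4T)\}$; multiplying by $e^T$ and evaluating at $|x| = cT$ makes the exponent asymptotically $T(1 - (c-A)^2/4)$, which is negative precisely when $c > A+2$, giving the first entry.

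For the second and third entries, I use the hypothesis $K = \overline K'$ with $\overline K\in W^{1,1}(\R)$ and set $V := \overline K * u$, so that $V_x = K*u$ matches the drift in the linear equation for $\tilde u$. The representation $w(T,x) = \int_\R \Gamma(T,0,x,y)\,u_0(y)\,dy$, where $\Gamma$ is the fundamental solution from~\eqref{eqGamma} with its $v$ taken to be this $V$, reduces the problem to bounding the heat kernel via Proposition~\ref{prop:heat_kernel}. Young's convolution inequality combined with~\eqref{eq:convolution_bounds} furnishes
\[
A_0 := \|V\|_\infty \le \|\overline K\|_1\|u\|_\infty,\ \ A_1 := \|V_x\|_\infty \le \tfrac12\|K\|_1\|u\|_\infty,\ \ A_2 := \|V_{xx}\|_\infty \le |J|\,\|u\|_\infty.
\]
Combining each of the two alternatives inside the $\min$ of~\eqref{eq:kernel_bound} with the $e^T$ prefactor and evaluating at $|x| = cT$ as $T\to+\infty$ produces, after letting $\delta\to 0^+$, the decay conditions $c^2 > 4(1+A_0^2)(1+A_2/2)$ and $c^2 > 4\inf_{\epsilon\in(0,1)}(1+A_0^2/(1-\epsilon))(1+A_1^2/(4\epsilon))$; substituting the above bounds for $A_0,A_1,A_2$ recovers the second and third entries of~\eqref{eq:c^*} exactly. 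Taking the minimum of the three resulting restrictions on $c$ then gives~\eqref{eq:c^*}.

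The step requiring the most care is the reinterpretation of the drift: the coefficient in~\eqref{eqGamma} is $v_x$, not $v$, so reading the drift $K*u$ of $\tilde u$ as $V_x$ with $V = \overline K * u$ is what allows the constant $A_0$ in the heat-kernel bound to depend on $\|\overline K\|_1$ rather than the possibly much larger $\|K\|_1$. This is the precise mechanism by which the additional hypothesis $\overline K\in W^{1,1}$ in Theorem~\ref{thm:finite_speed}(ii) produces strictly sharper upper bounds than the bare $K\in L^1$ assumption would.
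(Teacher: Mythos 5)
Your proposal is correct and follows essentially the same route as the paper: comparison with the linearized super-solution (the paper's $\overline u$, which is your $e^t w$), Proposition~\ref{prop:fokker_planck} with $A=\tfrac12\|K\|_1\|u\|_\infty$ for the first entry, and Proposition~\ref{prop:heat_kernel} applied to the fundamental solution of~\eqref{eqGamma} with $v=\overline K*u$ (so that $v_x=K*u$) and the bounds $A_0=\|\overline K\|_1\|u\|_\infty$, $A_1=\tfrac12\|K\|_1\|u\|_\infty$, $A_2=|J|\,\|u\|_\infty$ for the remaining two entries. The key observation you single out, namely reading the drift as $v_x$ with $v=\overline K*u$ to exploit $\|\overline K\|_1$, is exactly the mechanism used in the paper's proof.
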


We make a few observations about \Cref{cor:c^*}.  Firstly, when $K=0$ (and then, $J=0$ and $\overline{K}=0$),~\eqref{eq:c^*} reduces to the well-known formula $c^*=2$ for the {\it local} Fisher-KPP equation~\cite{Fisher,KPP}.

Secondly, it is possible to derive an explicit formula for the term involving an infimum over $\epsilon$.  The resulting formula is complicated and since we are interested in estimates that are asymptotically correct, we leave the formula as above.

Thirdly, when $0\le J<1$, we may make some estimates more explicit. Since $\|u\|_\infty\le(1-J)^{-1}$ in this case, as we show later in \Cref{lemmax}, it follows that
\begin{equation}\label{eq:c^*1}
	\begin{split}
	c^*
		\le \min\Bigg\{
		&2 + \frac{\|K\|_1}{2(1-J)},
		2\sqrt{\frac{2-J}{2(1-J)}\left(1+ \frac{\|\overline K\|_1^2}{(1-J)^2}\right)},\\
		&2\inf_{\epsilon \in (0,1)} \sqrt{\left(1 + \frac{\|K\|_1^2}{16\epsilon(1-J)^2}\right)\left(1 + \frac{\|\overline{K}\|_1^2 }{(1-\epsilon)(1-J)^2}\right)}
		\Bigg\}.
	\end{split}
\end{equation}

Lastly, we point out that, for each term in the minimization characterizing $c^*$ in~\eqref{eq:c^*}, one may easily find choices of $\overline K$ for which that term is the minimum.

Since, with the unwieldy characterization of $c^*$ in~\eqref{eq:c^*1}, it is difficult to see if this is an improvement over known results, we now compare these bounds on $c^*$ with the known results.  First, in~\cite{NadinPerthameRyzhik}, as mentioned above, a traveling wave moving with speed $c \in [2, 2+ \chi \sqrt{d} / (d - \chi)]$ was constructed for $K(x)=-\chi \sign(x) e^{-|x|/\sqrt{d}}/(2d)$ and $0<\chi<\min\{1,d\}$. Given that choice of $K$, one can observe that $0<J = \chi/d<1$, $\|K\|_1 = \chi/\sqrt{d}$, and $\|\overline K\|_1 = \chi$.  We see improvement in a few regimes.  The first term in the right-hand side of the upper bound for~$c^*$~\eqref{eq:c^*1} is essentially the same as the upper estimate $2+ \chi \sqrt{d} / (d - \chi)$ in~\cite{NadinPerthameRyzhik}.\footnote{They are the same up to the $1/2$ factor which is a mere oversight in~\cite{NadinPerthameRyzhik}.} However, the second term is asymptotically better, for example, if $\chi = 1/d$ and $\chi \to 0$: the upper bound in~\cite{NadinPerthameRyzhik} is asymptotic to $2 + \chi^{3/2}$ while ours is, using the middle term in~\eqref{eq:c^*1}, at most asymptotic to $2 + 3\chi^2/2$ (the third term in~\eqref{eq:c^*2} would also lead to a similar asymptotics). The third term is also asymptotically better in the case where, for example, $\chi=d^2$ and $d\to0$. In this case, the upper bound in~\cite{NadinPerthameRyzhik} is asymptotic to $2 +d^{3/2}$ while ours is at most asymptotic to $2 + d^3/16$, after choosing $\epsilon = 1-\sqrt d$. We recall again that no explicit bounds are given for the speed of propagation in the Cauchy problem in~\cite{NadinPerthameRyzhik}.  On the other hand, it is difficult to compare with the results of Salako and Shen~\cite{SalakoShen1, SalakoShen2} as they provide two upper bounds: the first is equivalent to the bound of Nadin, Perthame, and Ryzhik~\cite{NadinPerthameRyzhik} discussed above, and the second is characterized in terms of the solution to an algebraic equation which is not explicitly solvable. However, in the case when $d$ is fixed such that $d>1$, which in the notation of~\cite{SalakoShen1,SalakoShen2} corresponds to $a>1$, this second upper bound converges to $1 + d>2$ as $\chi \to 0$, while ours converges to $2$.  Hence, it follows that our bound is an improvement of the one of~\cite{SalakoShen1,SalakoShen2} in some regimes.

Before proceeding to the discussion of the effect of large kernels, we note that we do not address the problem of the existence of travelling waves.  We believe that the procedure of Nadin, Perthame, and Ryzhik~\cite{NadinPerthameRyzhik} is sufficiently robust to handle our case as well without the addition of new ideas.


\subsection{The effect of large ``negative chemotaxis'' kernels}

One may ask what happens when $K$ is not in $L^1(\R)$.  In the case where $K$ is increasing except at $x = 0$ and is merely $L^\infty(\R)$, this question was investigated by Mimura and Ohara~\cite{MimuraOhara} who found kernels which admitted standing waves.  That is, certain kernels $K$ can {\em stop} propagation.  On the other hand, we ask what happens in the case of decreasing kernels which are not in $L^1(\R)$.

First, we address the well-posedness of the problem.

\begin{prop}\label{prop:well_posed}
If $K$ is an odd kernel in $L^\infty(\R)$ which is non-increasing except at $0$ and is non-positive in $(-\infty,0)$ and non-negative in $(0,+\infty)$, then there exists a unique bounded classical solution $u$ that solves~\eqref{eq:the_equation}-\eqref{hypu0} and that satisfies $0 < u(t,x)\le1$ for all $(t,x) \in (0,+\infty)\times \R$.
\end{prop}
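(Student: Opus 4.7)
The plan is to construct $u$ as the limit of solutions to approximating problems with $L^1$ kernels. Fix a smooth, even function $\varphi\in C^\infty_c(\R)$ with $\varphi(0)=1$ and $0\le\varphi\le1$ that is non-increasing on $(0,+\infty)$, and set $K_n(x):=K(x)\,\varphi(x/n)$. Then $K_n$ is odd (because $\varphi$ is even), non-negative and non-increasing on $(0,+\infty)$ (a product of two such functions), belongs to $L^1(\R)\cap L^\infty(\R)$, and has the same jump $J_n=J\le 0$ at the origin as $K$. For each such $K_n$, standard well-posedness theory for the $L^1$ case, together with Lemma~\ref{lemmax} and the assumption $J\le 0$, yields a unique bounded classical solution $u_n>0$ of~\eqref{eq:the_equation} with kernel $K_n$ satisfying $u_n\le\max\{1,(1-J)^{-1}\}=1$.

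The next step is to derive estimates on $u_n$ that are uniform in $n$. Integrating~\eqref{eq:the_equation} in $x$ (boundary terms vanish thanks to the Gaussian tails afforded by Proposition~\ref{prop:fokker_planck}) gives $\frac{d}{dt}\|u_n(t,\cdot)\|_{L^1}\le\|u_n(t,\cdot)\|_{L^1}$, and hence $\|u_n(t,\cdot)\|_{L^1}\le e^t\|u_0\|_{L^1}$. Therefore
\[
\|K_n*u_n(t,\cdot)\|_\infty\le\|K\|_\infty\|u_n(t,\cdot)\|_{L^1}\le\|K\|_\infty e^t\|u_0\|_{L^1},
\]
while~\eqref{eq:convolution_bounds_prelim}, applied to $K_n$ (which is odd, monotone away from $0$, with jump $J$), yields $\|(K_n*u_n)_x\|_\infty\le 2|J|$. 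These bounds, combined with standard interior parabolic Schauder estimates, provide uniform local $C^{1+\alpha/2;\,2+\alpha}_{t;\,x}$ control on $u_n$ on every compact subset of $(0,+\infty)\times\R$.

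Passing to the limit, Arzel\`a-Ascoli produces a subsequence $u_{n_k}\to u$ in $C^{1;2}_{t;x,\mathrm{loc}}((0,+\infty)\times\R)$ with $0\le u\le 1$. Writing $K_n*u_n-K*u=K_n*(u_n-u)+(K_n-K)*u$, the first term tends to $0$ locally uniformly by dominated convergence using $|K_n|\le\|K\|_\infty$ and the uniform Gaussian tail on $u_n$; the second tends to $0$ by dominated convergence since $K_n\to K$ pointwise with $|K_n|\le\|K\|_\infty$ and $u(t,\cdot)\in L^1(\R)$. A parallel argument based on the uniform total-variation estimate for $K_n'$ handles $(K_n*u_n)_x\to(K*u)_x$. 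Thus $u$ is a classical solution of~\eqref{eq:the_equation}, the initial datum is attained in $L^1(\R)$ as $t\to0^+$, and strict positivity $u>0$ follows from the strong parabolic maximum principle applied to the linear equation satisfied by $u$ with bounded coefficients $K*u$ and $(K*u)_x$. Uniqueness is then obtained as follows: given two such solutions $u,v$, the difference $w=u-v$ satisfies a linear parabolic equation of the form
\[
w_t=w_{xx}-(K*u)w_x-(K*u)_x\,w-(K*w)v_x-(K*w)_x\,v+(1-u-v)w,
\]
whose coefficients are bounded by the estimates just derived; multiplying by $w\,e^{-\lambda|x|}$ with $\lambda>0$ small and using $\|K*w(t,\cdot)\|_\infty\le\|K\|_\infty\|w(t,\cdot)\|_{L^1}$ (valid by the same mass-growth estimate) produces a closed Gr\"onwall inequality on the weighted $L^2$-norm of $w$ that forces $w\equiv 0$.

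The main obstacle is that $K\notin L^1(\R)$, so neither $K*u$ nor $(K*u)_x$ is a priori defined on an arbitrary $L^\infty$ function. This is overcome by pairing the $L^\infty$ bound on $K$ with the a priori mass-growth estimate $\|u(t,\cdot)\|_{L^1}\le e^t\|u_0\|_{L^1}$, together with the uniform total-variation bound on $K_n'$, which together keep both non-local coefficients bounded along the approximation and in the limit.
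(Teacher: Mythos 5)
Your truncation-and-compactness strategy is genuinely different from the paper's proof (which constructs $u$ directly by a Banach fixed point in a space of functions with exponential spatial decay, so that $K*u$ makes sense for $K\in L^\infty(\R)$, then bootstraps regularity and globalizes via the maximum principle and the mass bound $P(t)\le P(0)e^t$), but as written it has genuine gaps. The first is your opening step: ``standard well-posedness theory for the $L^1$ case'' producing a unique bounded classical solution $u_n$ for each truncated kernel $K_n$ is not available. The paper never proves well-posedness for general odd monotone $L^1$ kernels --- \Cref{thm:finite_speed} and \Cref{lemmax} \emph{assume} a bounded classical solution --- and the literature it cites treats only the specific exponential Keller--Segel kernel. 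To justify this step you would have to run a local existence argument for each $K_n$ yourself (e.g.\ the paper's fixed point in the exponentially weighted class $E_T$); but that argument uses only $\|K\|_\infty$ and the bounded-variation structure of $K$, so it applies verbatim to $K$ itself, which makes the whole approximation scheme superfluous rather than a shortcut.

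The second gap is the assertion that the bounds on $\|K_n*u_n\|_\infty$ and $\|(K_n*u_n)_x\|_\infty$ ``combined with standard interior parabolic Schauder estimates'' give $C^{1+\alpha/2;2+\alpha}_{t;x}$ control uniformly in $n$. Schauder estimates require H\"older continuity \emph{in time} of the drift $K_n*u_n$, and the obvious bound $|K_n*u_n(t_1,x)-K_n*u_n(t_2,x)|\le\|K_n\|_{L^1}\,\|u_n(t_1,\cdot)-u_n(t_2,\cdot)\|_\infty$ is useless here because $\|K_n\|_{L^1}\to\infty$ when $K\notin L^1(\R)$. This is precisely where the paper invests its real work: it proves time-H\"older regularity of $K*u$ by splitting the convolution at a radius $R\sim\sqrt{|\log|t_1-t_2||}$ and using Aronson's Gaussian bounds on $u$. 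You do have the same Gaussian tails (uniformly in $n$, locally in time), so the step is repairable, but as stated it fails; bounded coefficients alone give uniform $W^{2,1}_q$ and $C^{(1+\alpha)/2;1+\alpha}$ bounds, enough for compactness of $u_n$ and $\nabla u_n$, but not the claimed $C^{1;2}$ convergence to a classical solution. Finally, your uniqueness Gr\"onwall does not close as written: estimating $\|K*w(t,\cdot)\|_\infty\le\|K\|_\infty\|w(t,\cdot)\|_{L^1}$ with $\|w\|_{L^1}$ merely bounded makes the term involving $(K*w)v_x$ linear, not quadratic, in the weighted $L^2$ norm, and an inequality of the form $y'\le Cy+C\sqrt{y}$ with $y(0)=0$ does not force $y\equiv0$; you would need to dominate $\|w(t,\cdot)\|_{L^1}$ by the Gr\"onwalled quantity using the decay of both solutions (or argue in $L^1$), whereas the paper gets uniqueness directly from the fixed point.
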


Next, we define
\begin{equation}\label{defP}
P(t) = \int_\R u(t,x)\,dx
\end{equation}
to be the total population at time $t\ge0$.  In the Fisher-KPP equation $u_t=u_{xx}+u(1-u)$, if the initial condition $u_0$ is non-negative, non-zero, bounded and compactly supported, it is known that $P(t)/(ct) \to 2$  as $t\to+\infty$, where $c$ is the minimal traveling wave speed, which is known to be $c = 2$.  We use the notation with $c$ to emphasize what role the speed plays in the asymptotics of $P$.  See~\Cref{fig:P} for a visual depiction of this.
\begin{figure}[t]
\begin{center}
\begin{overpic}[scale=.5]
	{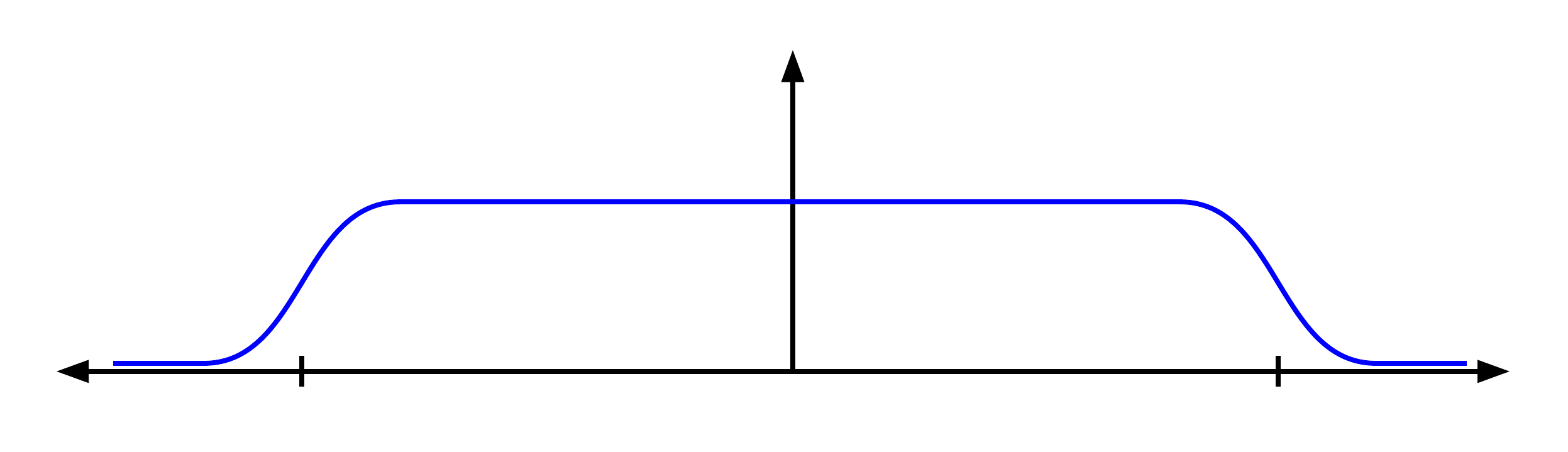}
	\put(2,1){$x$}
	\put(16, 1){$-ct$}
	\put(80,1){$ct$}
	\put(40,19){\color{blue} $u(t,\cdot)$}
\end{overpic}
\caption{A cartoon of $u$.  Notice that $P(t)$ is the area under the blue line that represents $u$.  From this image it is clear that $P(t) \sim 2ct$.}
\label{fig:P}
\end{center}
\end{figure}
This is a key point in our analysis.  In the theorem below and in the sequel, $\R^* := \R\setminus\{0\}$.

\begin{theorem}\label{thm:acceleration}
Suppose that $u$ is a bounded classical solution of~\eqref{eq:the_equation}-\eqref{hypu0} with $\supp(u_0) \subset [-a,a]$ for some $a>0$ and with an odd kernel $K \in L^\infty(\R)$ that is non-increasing except at~$0$ and is non-negative on $(0,+\infty)$ $($and then non-positive on $(-\infty,0))$. Then:
\begin{enumerate}[label=(\roman*)]
	\item if $\lim_{x\to+\infty} K(x) > 0$, there exists $r\in(0,1)$, depending only on $K$, and a constant $C\ge1$, depending only on $P(0)$ and $K$, such that
	$$C^{-1} e^{rt} \leq P(t) \leq Ce^t\ \hbox{ for all }t\ge0;$$
	\item if $K \in L^p(\R)$ with $p\in[1,\infty)$, then there exists a constant $C>0$, depending only on $u_0$, $p$, and $K$, such that
	$$P(t) \leq C\,(t^p +1)\ \hbox{ for all }t\ge0;$$
	\item if $K\in C^1(\R^*)$ is convex on $(0,+\infty)$ $($and then concave on $(-\infty,0))$ and is such that $K(x) \geq A\,(1+x)^{-\alpha}$ on $(0,+\infty)$ for some $\alpha \in (0,1)$ and $A > 0$, then there exists $C>0$, depending only on $u_0$ and $K$, such that
	$$P(t) \geq C(1+t)^{1/ \alpha}\ \hbox{ for all }t\ge0.$$
\end{enumerate}
\end{theorem}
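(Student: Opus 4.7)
The plan is to derive a differential identity for the total mass $P$ and combine it with propagation bounds on $u$. Since $K\in L^{\infty}(\R)$ and $u\le 1$ by~\Cref{prop:well_posed}, the convolution $K\ast u$ is bounded, and~\Cref{prop:fokker_planck} then provides Gaussian spatial decay for $u(t,\cdot)$ at each finite time. Integrating~\eqref{eq:the_equation} over $\R$ makes the flux and diffusion boundary terms vanish, yielding
\begin{equation*}
P'(t)=\int_{\R}u(1-u)\,dx=P(t)-\int_{\R}u(t,x)^{2}\,dx.
\end{equation*}
Since $u\le 1$, this gives $0\le P'(t)\le P(t)$, and Gr\"onwall immediately delivers the upper bound $P(t)\le P(0)\,e^{t}$ of part~(i).

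For the upper bound in part~(ii), Young's convolution inequality with conjugate exponents $p$ and $q=p/(p-1)$, combined with $u\le 1$, gives
$$A(t):=\|(K\ast u)(t,\cdot)\|_{\infty}\le \|K\|_{p}\,\|u(t,\cdot)\|_{q}\le \|K\|_{p}\,P(t)^{1-1/p}.$$
The function $w:=e^{-t}u$ is a sub-solution to the linear advection-diffusion equation~\eqref{eq:kfp} with velocity $v=K\ast u$, so by comparison $w(T,\cdot)\le \tilde w(T,\cdot)$, where $\tilde w$ solves that linear equation with datum $u_{0}$. Applying~\Cref{prop:fokker_planck} to $\tilde w$ and multiplying by $e^{T}$ yields decay for $u(T,\cdot)$ outside an interval of length $O\bigl((A(T)+2)T\bigr)$, with uniformly bounded $L^{1}$-tails. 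Integrating $u\le 1$ over the central interval therefore produces
$$P(T)\le 2\bigl(A(T)+2\bigr)T+O(1)\le 2\|K\|_{p}\,T\,P(T)^{1-1/p}+O(T),$$
which rearranges to $P(T)^{1/p}\le C(1+T)$, i.e.\ $P(T)\le C(1+T)^{p}$.

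For the lower bounds in parts~(i) and~(iii), the strategy is to track the rightmost point where $u$ is non-negligible. Fix a small $\alpha_{0}\in(0,1)$ and set
$$X(t):=\sup\bigl\{x\ge 0:\ u(t,y)\ge \alpha_{0}\text{ for every }y\in[0,x]\bigr\}.$$
By local comparison with a small Fisher-KPP bump sub-solution (using that $K\ast u$ is bounded) and, after replacing $u_{0}$ by a symmetric non-zero sub-datum lying below $u_{0}$, by the $x\mapsto-x$ symmetry of the problem, one obtains $X(t)>0$ for $t\ge t_{0}$ and $u\ge\alpha_{0}$ on $[-X(t),X(t)]$, so that $P(t)\ge 2\alpha_{0}X(t)$. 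Because $K\ge 0$ is non-increasing on $(0,+\infty)$,
$$(K\ast u)(t,X(t))\ge \int_{-X(t)}^{0}K(X(t)-y)\,u(t,y)\,dy\ge \alpha_{0}\,K(2X(t))\,X(t).$$
Building a moving sub-solution $\underline{u}(t,x)=\eta\,\varphi(x-X(t))$ with a compactly supported Fisher-KPP bump $\varphi$ and small $\eta>0$, and applying the parabolic comparison principle, delivers the differential inequality
$$X'(t)\ge c\,K\bigl(2X(t)\bigr)\,X(t)\quad\text{ for some }c>0.$$
In case~(i), $K(2X)\ge\lim_{x\to+\infty}K(x)>0$, so $X'\ge rX$ and $X(t)\ge C^{-1}e^{rt}$, hence $P(t)\ge C^{-1}e^{rt}$. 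In case~(iii), $K(2X)\ge A(1+2X)^{-\alpha}$, so $X'\ge c'X^{1-\alpha}$, which integrates to $X(t)\ge C(1+t)^{1/\alpha}$, hence $P(t)\ge C(1+t)^{1/\alpha}$.

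The main obstacle is justifying the differential inequality $X'(t)\gtrsim(K\ast u)(t,X(t))$ in a regime where the advection velocity itself grows with $X(t)$: the moving sub-solution must keep up with a large, possibly exponentially growing, time-dependent drift while remaining thin enough on the scale of $K$ so that $K\ast\underline{u}$ does not alter its dynamics near the leading edge. One must also control the negative contribution $\int_{X(t)}^{+\infty}K(X(t)-y)\,u(t,y)\,dy$, which is delicate in case~(iii) since $K\notin L^{1}(\R)$; this can be handled by combining the Gaussian decay of $u$ beyond $X(t)$ from~\Cref{prop:fokker_planck} with the prescribed decay rate of $K$ to show that this term is dominated by $\alpha_{0}K(2X(t))X(t)$. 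A local-in-time Harnack inequality of the type used in the proof of~\Cref{thm:localized_advection_bound} should then supply the pointwise lower bound on $u$ in a neighbourhood of the moving front needed to close the comparison.
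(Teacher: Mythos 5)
Your upper bounds are fine: the identity $P'=\int_\R u(1-u)\,dx\le P$ for part~(i) and, for part~(ii), the combination of $\|K*u(t,\cdot)\|_\infty\le\|K\|_p P(t)^{1-1/p}$ with the super-solution $e^{t}\tilde w$ and \Cref{prop:fokker_planck}, followed by the rearrangement $P(T)\lesssim T\,P(T)^{1-1/p}+T$, is essentially the paper's argument. The problem is the lower bounds in (i) and (iii), which are the heart of the theorem and where your proof has a genuine gap at exactly the step you flag as ``the main obstacle.'' The differential inequality $X'(t)\ge c\,K(2X(t))\,X(t)$ is never established, and the ingredients you invoke to get it do not work in this setting. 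First, the symmetrization step (``replacing $u_0$ by a symmetric sub-datum'') presupposes a comparison principle between two solutions of the \emph{non-local} equation~\eqref{eq:the_equation}; no such principle holds (or is proved anywhere in the paper), since at a touching point the difference of the advection terms $[(K*\tilde u)\tilde u-(K*u)u]_x$ has no sign. Second, the premise that $u(t,\cdot)\ge\alpha_0$ on a full interval $[0,X(t)]$ is unjustified without monotonicity of $u(t,\cdot)$: the $\alpha_0$-level set need not be an interval, and indeed the paper only obtains pointwise information behind the front under the extra hypothesis that $u_0$ is even and radially non-increasing (Corollary~\ref{cor:acceleration_pointwise1}), or in time-averaged measure form (Corollary~\ref{cor:acceleration_pointwise2}). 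Third, in cases (i) and (iii) one has $K\notin L^1(\R)$, so $\|K*u(t,\cdot)\|_\infty$ is only controlled by $\|K\|_\infty P(t)$, which grows (exponentially in case (i)); a bump sub-solution must therefore ride a drift that is unbounded in time, and the negative contribution $\int_{X(t)}^{+\infty}K(X(t)-y)u(t,y)\,dy$ cannot be absorbed by the Gaussian bound of \Cref{prop:fokker_planck}, because that bound is only effective at distances of order $\|K\|_\infty P(t)\,t$ from the origin, far beyond $X(t)$; in between, the mass ahead of the front is only controlled by $P(t)$ itself, which can dominate $\alpha_0 K(2X(t))X(t)$. So the asserted drift lower bound and the comparison with the moving bump are both open, and the local-in-time Harnack inequality does not supply the missing uniform-in-time pointwise bound.

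For contrast, the paper never tracks a front pointwise: the lower bounds are obtained purely from integral quantities. In case (i) one shows, via a maximum-principle argument applied to $w=e^{\epsilon t}u$ together with the jump structure of $(K*u)_x$ and $|J|\ge 2K_\infty$, that $\limsup_{t\to+\infty}\|u(t,\cdot)\|_\infty\le(1+\epsilon)/(1+2K_\infty)$, whence $P'(t)=\int_\R u(1-u)\,dx\ge r\,P(t)$ with $r=(2K_\infty-\epsilon)/(1+2K_\infty)$. In case (iii) one works with $V(t)=\int_\R u(1-u)\,dx=P'(t)$, uses Lemma~\ref{lem:minimization} to bound $(K*u)_x$ from below and derive $V'+C_KV\ge C_0P^{1-\alpha}$, and closes with an ODE comparison against $\underline P(t)=\epsilon(1+t)^{1/\alpha}$. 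If you want to salvage your strategy, you would need to either prove the structural facts you assume (monotonicity of $u(t,\cdot)$, a usable comparison framework, and control of the mass just ahead of the front) or switch to the integral-quantity route; as written, the lower bounds in (i) and (iii) are not proved.
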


We point out that the cases (ii) and (iii) complement each other.  They match in the sense that if, in addition to the other assumptions on the kernel $K$, it satisfies $K(x) \approx x^{-\alpha}$ as $x\to+\infty$ with $\alpha\in(0,1)$, then $K \in L^{(1/\alpha) +\epsilon}(\R)$ for all $\epsilon>0$ so we obtain that $t^{1/\alpha} \lesssim P(t) \lesssim t^{(1/\alpha)+ \epsilon}$ as $t\to+\infty$, for all $\epsilon>0$.

We note that these estimates are obtained by deriving differential inequalities on the quantities $P(t)$ and $V(t) := \int_\R u(t,x)\,(1-u(t,x))\,dx$, related by the equation $P' = V$, and an estimate on the derivative $(K*u)_x$.  Obtaining propagation speed estimates via integral quantities goes back at least to~\cite{ConstantinKiselevObermanRyzhik} and seems to be quite well suited to understanding acceleration phenomena in situations where there is no comparison principle as in this work.  It is interesting to note that the superlinear acceleration in~\Cref{thm:acceleration} is due entirely to advection.  To our knowledge, this has not been observed in other reaction-diffusion-advection models previously.

These estimates may be bootstrapped to obtain pointwise bounds.  We obtain the following corollary.

\begin{corollary}\label{cor:acceleration_pointwise}
Suppose that $u$ is a bounded classical solution of~\eqref{eq:the_equation}-\eqref{hypu0} with $\supp(u_0) \subset [-a,a]$ for some $a>0$ and with an odd kernel $K \in L^\infty(\R)\cap C^1(\R^*)$ that is non-negative and convex on $(0,+\infty)$.
\begin{enumerate}[label=(\roman*)]
\item If $\lim_{x\to+\infty}K(x)>0$ and if $u_0$ is even, radially non-increasing and of class $C^{2+\beta}(\R)$ for some $\beta\in(0,1)$, then there exists $\lambda>0$ such that
\[
	\lim_{t\to+\infty} \inf_{|x| < e^{\lambda t}} u(t,x) >0.
\]
\item If there exist $A\ge1$ and $\alpha \in(0,1)$ such that $A^{-1} (1+x)^{-\alpha} \leq K(x) \leq A (1+x)^{-\alpha}$ for all $x\in\R$, then there exists a constant $C_0>0$, depending on $u_0$ and~$K$, such that, for any $\mu\in(0,1)$,
\[
	\liminf_{t\to+\infty}\, t^{-1/\alpha}\left(\frac{1}{t}\int_{0}^t \big|\{ x\in\R : u(s,x) \geq\mu\}\big|\,ds\right)\ge C_0.
\]
\end{enumerate}
\end{corollary}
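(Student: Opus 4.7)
Since $u_0$ is even and $K$ is odd, uniqueness in \Cref{prop:well_posed} implies $u(t,\cdot)$ is even for all $t\geq 0$ (check that $(t,x)\mapsto u(t,-x)$ solves the same Cauchy problem, using the oddness of $K$). The $C^{2+\beta}$ regularity of $u_0$ allows a sliding-type argument: for each $\sigma>0$, I would set $w_\sigma(t,x):=u(t,2\sigma-x)-u(t,x)$ on $\{x>\sigma\}$ and verify, using that $K$ is non-increasing and non-negative on $(0,\infty)$, that $w_\sigma$ obeys a linear parabolic inequality for which the maximum principle preserves $w_\sigma\geq 0$; this yields radial monotonicity of $u(t,\cdot)$. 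Consequently $\{x:u(t,x)\geq\mu\} = [-R_\mu(t), R_\mu(t)]$, and
\[
P(t) \;\leq\; 2R_\mu(t) \;+\; \int_{|x|>R_\mu(t)} u(t,x)\,dx.
\]
The upper bound $P(t)\leq Ce^t$ from \Cref{thm:acceleration}(i) gives $\|K*u(t,\cdot)\|_\infty \leq \|K\|_\infty P(t) \leq C'e^t$, so \Cref{prop:fokker_planck} applied with $v=K*u$ produces super-exponentially small Gaussian tails beyond $|x|\gtrsim t e^t$, while on $[R_\mu(t),\, Cte^t]$ the monotonicity gives $u\leq \mu$. Combining with the lower bound $P(t)\geq C^{-1}e^{rt}$ and choosing $\mu$ sufficiently small depending on $K$ and $r$ yields $R_\mu(t)\geq c_0 e^{\lambda t}$ for some $\lambda>0$. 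A local-in-time Harnack inequality (as used earlier in the paper) then upgrades the pointwise bound $u(t,\pm R_\mu(t))\geq\mu$ to a uniform positive lower bound on $\{|x|\leq e^{\lambda' t}\}$ for any $\lambda'<\lambda$.

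\textbf{Part (ii).} The starting point is $P'(s) = V(s) := \int u(1-u)\,dx$, which integrates to $\int_0^t V(s)\,ds = P(t)-P(0) \geq Ct^{1/\alpha}$ by \Cref{thm:acceleration}(iii). Splitting at the level $\mu$ yields
\[
V(s) \;\leq\; \tfrac{1}{4}|\Omega_\mu(s)| \;+\; \int_{\{u(s,\cdot)<\mu\}} u(s,x)\,dx,
\]
since $u(1-u)\leq \tfrac14$ on $\Omega_\mu$ and $u(1-u)\leq u$ elsewhere. To control the residual tail integral, I would combine \Cref{prop:fokker_planck} with Young's inequality on $K\in L^p(\R)$ for any $p>1/\alpha$ (valid since $K(x) \asymp (1+|x|)^{-\alpha}$): together with the matching upper bound $P(s)\lesssim s^{1/\alpha+\varepsilon}$ from \Cref{thm:acceleration}(ii), one obtains $\|K*u(s,\cdot)\|_\infty \lesssim s^{(1-\alpha)/\alpha+\varepsilon}$ and hence an essential-support radius $X(s)\lesssim s^{1/\alpha+\varepsilon}$. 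Integrating the resulting inequality on $V(s)$ over $[0,t]$ and comparing with the lower bound $\int_0^t V(s)\,ds\geq Ct^{1/\alpha}$ would deliver the claimed time-averaged lower bound of order $t^{1/\alpha+1}$ on $\int_0^t |\Omega_\mu(s)|\,ds$.

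The main obstacle in (i) is preserving the radial monotonicity of $u(t,\cdot)$ despite the non-local advection: the linear equation satisfied by $w_\sigma$ involves $K*u$ and $K*w_\sigma$, and closing the maximum-principle argument requires careful use of the sign and monotonicity structure of $K$ on $(0,\infty)$. The main obstacle in (ii) is quantitatively closing the tail estimate: the naive bound $\int_{\{u<\mu\}} u\,dx \lesssim \mu X(s)$ is, after time integration, of the same polynomial order $t^{1/\alpha+1}$ as the leading term, so one likely needs a bootstrap or a contradiction argument---e.g., invoking \Cref{thm:acceleration}(ii) together with $V=P'$ to rule out profiles in which $u$ is so spread out that $|\Omega_\mu|$ becomes negligible relative to $P$.
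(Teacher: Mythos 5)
Even granting the evenness and radial monotonicity of $u(t,\cdot)$ (which you only sketch: note that the paper's proof of this step in \Cref{cor:acceleration_pointwise1} works with the equation satisfied by $v=u_x$ and uses the \emph{convexity} of $K$ on $(0,+\infty)$ at a touching point, not merely its monotonicity; your reflected difference $w_\sigma$ satisfies an equation containing the genuinely nonlocal term $[(K*w_\sigma)u]_x$, whose sign is exactly the delicate issue, so this step is not closed as written), your mass-counting step fails quantitatively. You bound $P(t)\le 2R_\mu(t)+\mu\cdot O(t e^{t})+o(1)$ and compare with $P(t)\ge C^{-1}e^{rt}$; but $r<2K_\infty/(1+2K_\infty)<1$ always (see \eqref{defr}), so the intermediate contribution $\mu\,t e^{t}$ coming from the region between $R_\mu(t)$ and the essential-support radius $\sim t\|K*u\|_\infty\sim t e^{t}$ dominates $e^{rt}$ for \emph{every} fixed $\mu>0$, however small, and no lower bound on $R_\mu(t)$ follows. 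The paper circumvents this with two ingredients missing from your argument: (a) comparing $P(t/M)$ with the integrable profile $e^{rt/(2M)}(1+|x|)^{-1-r/4}$ to produce a point $x_t\gtrsim e^{2rt/(M(4+r))}$ where $u(t/M,\cdot)$ is merely \emph{exponentially small} (hence $u(t/M,\cdot)\ge e^{-3t/M}$ on $[-x_t,x_t]$ by monotonicity), and (b) the differential inequality $u_t-u_{xx}\ge u(1-(1+|J|)u)$ -- itself a consequence of monotonicity, via $-(K*u)u_x\ge0$ and $(K*u)_x\le|J|u$ -- which, with an explicit cosine sub-solution on the long time window $[t/M,t]$, grows that tiny value up to the order-one constant $1/(2(1+|J|))$ on $[-x_t+\pi,x_t-\pi]$. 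Without this KPP-growth step the exponential radius cannot be reached. (Also, \Cref{lem:harnack} is not available off the shelf here: its constants require a uniform-in-time bound on $F(u(t,\cdot))=K*u$, which grows like $e^{t}$ when $K\notin L^1(\R)$; it is in any case unnecessary once monotonicity is known.)

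\textbf{Part (ii).} Here the quantity you decompose is the wrong one. Writing $\Omega_\mu(s)=\{x:u(s,x)\ge\mu\}$, your split $V(s)\le\tfrac14|\Omega_\mu(s)|+\int_{\{u(s,\cdot)<\mu\}}u\,dx$ integrated against $\int_0^tV(s)\,ds=P(t)-P(0)\ge Ct^{1/\alpha}$ can, at the very best (even if the residual tail term were identically zero), give $\int_0^t|\Omega_\mu(s)|\,ds\gtrsim t^{1/\alpha}$, i.e.\ $\tfrac1t\int_0^t|\Omega_\mu(s)|\,ds\gtrsim t^{1/\alpha-1}$ -- a full factor of $t$ short of the statement; your claim that this ``would deliver \dots order $t^{1/\alpha+1}$'' does not follow from the displayed inequality. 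Worse, the residual term is of size $\mu X(s)\sim\mu s^{1/\alpha+\varepsilon}$, whose time integral swamps $t^{1/\alpha}$, so in fact nothing is obtained, and no bootstrap on this inequality can repair the missing factor of $t$. The paper's proof (\Cref{cor:acceleration_pointwise2}) decomposes $P(s)$ itself, not $V(s)$: the whole point is that the time-averaged burning rate $\tfrac1t\int_0^tV(s)\,ds=(P(t)-P(0))/t\lesssim t^{1/\alpha-1+\delta/2}$ is negligible compared with $P(s)\gtrsim s^{1/\alpha}$, so the ``bad'' set where $u$ is at intermediate values carries negligible mass in time average, while the low-density region contributes at most $\epsilon\cdot s^{1/\alpha+\delta/2}$ inside the essential support (plus the exponentially small outer tail from \Cref{prop:fokker_planck}), which is made negligible by taking the threshold $\epsilon=t^{-\delta}\to0$ rather than a fixed level; the fixed-$\mu$ statement then follows since $\{u\ge1-t^{-\delta}\}\subset\{u\ge\mu\}$ for large $t$. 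So the fix is not a contradiction argument on your $V$-inequality but a change of the quantity being split.
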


This is proved in two parts: \Cref{cor:acceleration_pointwise1} and \Cref{cor:acceleration_pointwise2}. We note that, in Corollary~\ref{cor:acceleration_pointwise}, if $K(x) \approx x^{-\alpha}$ as $x\to+\infty$ and $u_0$ is even, radially non-increasing and of class $C^{2+\beta}(\R)$ for some $\beta\in(0,1)$, we may easily bootstrap the results of \Cref{cor:acceleration_pointwise} to obtain that
\[
	\lim_{t\to+\infty} \sup_{|x|<C_1 t^{1/\alpha}} |u(t,x) - 1| = 0
\]
for some $C_1>0$, by using the fact that $u(t,x)$ is even in $x$ and non-increasing in $|x|$ for all $t > 0$.


\subsection*{Organization of the paper}

In \Cref{sec:finite_speed}, we prove~\Cref{thm:finite_speed} through the results~\Cref{thm:localized_advection_bound} and \Cref{prop:heat_kernel} up to technical lemmas that we outsource to \Cref{sec:lemmas}.  We also show that the lower bound in \Cref{sec:finite_speed} may be upgraded to convergence to 1 when $J < 1/2$ in \Cref{sec:finite_speed}. In \Cref{sec:acceleration}, we prove \Cref{thm:acceleration} and \Cref{cor:acceleration_pointwise}, including in \Cref{sec:well_posed} a discussion of {\em a priori} bounds for the negative chemotaxis model when $K$ is in $L^{\infty}(\R)$ but may not be in $L^1(\R)$.


\section{Bounds on the speed of propagation for the Cauchy problem}\label{sec:finite_speed}

In this section, we prove the bounds in \Cref{thm:finite_speed}.  We prove the upper bound and the lower bound in \Cref{sec:upper_bound} and \Cref{sec:lower_bound}, respectively. To begin, we show in Section~\ref{sec21} some explicit uniform bounds of $u$ when $J < 1$.


\subsection{Explicit pointwise bounds}\label{sec21}

\begin{lem}\label{lemmax}
Let $K$ and $u$ be as in Theorem~$\ref{thm:finite_speed}$. Then $u(t,x)>0$ for all $t>0$ and $x\in\R$. Furthermore, if $J < 1$,
\begin{equation}\label{ineqmax1}
u(t,x)\leq \max\big\{1,(1-J)^{-1}\big\}\qquad \hbox{ for all }t>0\hbox{ and }x\in\R.
\end{equation}
\end{lem}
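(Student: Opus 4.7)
The plan is, first, to deduce the positivity of $u$ from a parabolic strong maximum principle applied to the equation rewritten in non-divergence form, and second, to obtain the bound $u\le M^\ast:=\max\{1,(1-J)^{-1}\}$ through comparison with this constant via a perturbation of the form $u-M^\ast-\varepsilon-\delta t$, the crucial new ingredient being a lower bound for $(K*u)_x$ at a spatial maximum of $u(t,\cdot)$ that explicitly encodes the jump $J$ of $K$ at the origin.

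For the positivity, I would rewrite~\eqref{eq:the_equation} as
\[
u_t - u_{xx} + (K*u)\,u_x + \bigl[(K*u)_x + u - 1\bigr]\,u = 0,
\]
which, thanks to~\eqref{eq:convolution_bounds_prelim} and the assumed boundedness of $u$, is a linear parabolic equation with bounded coefficients. Comparison with the sub-solution $0$ yields $u\ge 0$, and the strong maximum principle upgrades this to $u>0$ in $(0,+\infty)\times\R$ because $u_0\not\equiv 0$.

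For the $L^\infty$ bound, the key identity is obtained by splitting $K*v(x)=\int K(y)v(x-y)\,dy$ at $y=0$ and integrating by parts on each half-line, producing a boundary term at $y=0^\pm$:
\[
(K*v)_x(x)=-J\,v(x)+\int_{\R\setminus\{0\}}K'(y)\,v(x-y)\,dy
\]
for any bounded $v\in C^1(\R)$, where $K'$ denotes the regular (a.e.) derivative. Since $K\in L^1(\R)$ is odd, monotonic and sign-constant on each open half-line, either $K$ is non-decreasing on each half (so $J\ge 0$ and $K'\ge 0$ a.e.) or $K$ is non-increasing on each half (so $J\le 0$, $K'\le 0$ a.e., and the fundamental theorem of calculus gives $\int_{\R\setminus\{0\}}K'=J$). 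Setting $J^+:=\max(J,0)$, so that $M^\ast=(1-J^+)^{-1}$ whenever $J<1$, at any $(t_0,x_0)$ where $u(t_0,\cdot)$ attains its spatial supremum $M_0=u(t_0,x_0)$, the identity above combined with $0\le u(t_0,\cdot)\le M_0$ and the sign of $K'$ yields, in both cases,
\[
(K*u)_x(t_0,x_0)\ge -J^+ M_0.
\]

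To close the argument, I would fix $T>0$ and $\varepsilon,\delta>0$ and set $w(t,x):=u(t,x)-M^\ast-\varepsilon-\delta t$, so that $w(0,\cdot)\le -\varepsilon<0$ because $\|u_0\|_\infty\le 1\le M^\ast$. Proposition~\ref{prop:fokker_planck} applied to the sub-solution $\tilde u:=e^{-t}u$ of the drift-diffusion equation with bounded drift $K*u$ and compactly supported initial datum $u_0$ gives Gaussian decay of $u(t,\cdot)$ at $\pm\infty$, uniformly on $[0,T]$, so $w$ attains its supremum on $[0,T]\times\R$ at some point $(t_0,x_0)$, which must satisfy $t_0>0$ whenever that supremum is positive. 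At such a point, $w$ has a local maximum, giving $u_t(t_0,x_0)\ge\delta$, $u_x(t_0,x_0)=0$, $u_{xx}(t_0,x_0)\le 0$, and $u(t_0,\cdot)\le M_0:=u(t_0,x_0)>M^\ast$; inserting these into the equation yields $(K*u)_x(t_0,x_0)\le(1-M_0)-\delta/M_0<1-M_0$, which combined with the key lower bound above forces $M_0(1-J^+)<1$, i.e., $M_0<M^\ast$, a contradiction. Thus $w\le 0$, and letting $\varepsilon,\delta\to 0$ and then $T\to\infty$ proves~\eqref{ineqmax1}. The main technical obstacle will be the uniform lower bound $(K*u)_x(t_0,x_0)\ge -J^+M_0$: both the jump contribution $-J\,u$ and the convolution $K'*u$ must be controlled simultaneously, and in the non-increasing case one needs the sharp refinement $K'*u(t_0,x_0)\ge M_0\int K'=M_0 J$, obtained by comparing $u(t_0,\cdot)$ pointwise to $M_0$, in order to cancel the jump contribution exactly.
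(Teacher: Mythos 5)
Your core mechanism is the same as the paper's: everything hinges on the observation that at a point where $u(t_0,\cdot)$ attains its spatial maximum $M_0$, splitting $(K*u)_x$ into the jump contribution $-J\,u$ plus a convolution against the variation of $K$ on $\R\setminus\{0\}$ forces $(K*u)_x(t_0,x_0)\ge -\max(J,0)\,M_0$, which, combined with the PDE at the maximum, yields $M_0<\max\{1,(1-J)^{-1}\}$. Where you genuinely differ is in how an attained maximum is produced: the paper does not perturb, but takes a maximizing sequence $(t_n,x_n)$ (kept away from $t=0$ via the bound~\eqref{boundu}), translates, and passes to a limit solution $u_\infty$ by parabolic estimates, which attains its supremum at an interior point; you keep the original solution and buy attainment with the sliding constant $\varepsilon+\delta t$ together with Gaussian decay in $x$ coming from \Cref{prop:fokker_planck}. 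Both routes work; the paper's compactness argument needs only boundedness of $u$ (no decay, no use of the compact support of $u_0$), while yours avoids the limiting procedure at the cost of invoking the tail estimate.

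Three details need repair, none fatal. First, your identity $(K*v)_x=-Jv+\int_{\R\setminus\{0\}}K'(y)\,v(\cdot-y)\,dy$ with $K'$ the a.e.\ derivative is false for general monotone $K\in L^1(\R)$, which may have jumps away from $0$ or a singular part; the correct statement, used in the paper, replaces $K'(y)\,dy$ by the Radon measure $dK(y)$. Likewise the FTC claim $\int_{\R\setminus\{0\}}K'\,dy=J$ can fail (in the non-increasing case one only has $\int_{\R\setminus\{0\}}K'\,dy\ge J$), but since that inequality goes in the favorable direction, your key bound $(K*u)_x(t_0,x_0)\ge-\max(J,0)\,M_0$ survives once rewritten with $dK$. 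Second, \Cref{prop:fokker_planck} is stated for solutions of the drift--diffusion equation, whereas $e^{-t}u$ is only a sub-solution; either justify the sub-solution version via the fundamental-solution representation, or, as in the paper's proof of Theorem~\ref{thm:finite_speed}-(ii), compare $u$ with the solution $\overline u$ of $\overline u_t+((K*u)\overline u)_x=\overline u_{xx}+\overline u$ and apply the proposition to $e^{-t}\overline u$. Third, since $u$ is only assumed to attain $u_0$ in $L^1(\R)$ as $t\to0^+$, ruling out a positive supremum of $w$ escaping to $t\to0^+$ requires the small-time bound $u(t,x)\le\|u_0\|_\infty e^{Ct}$ (the paper's~\eqref{boundu}), not merely the statement $w(0,\cdot)\le-\varepsilon$.
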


\begin{proof}
First of all, the assumptions on $K$ and $u$ imply that the functions $a:=K*u$ and $b:=(K*u)_x$ belong to $L^{\infty}((0,+\infty)\times\R)$ with $\|a\|_\infty\le\|K\|_1\|u\|_\infty/2$ and $\|b\|_\infty\le \,|J|\,\|u\|_\infty$ as noted in~\eqref{eq:convolution_bounds}. The function $u$ can then be written as a bounded classical solution of
$$u_t=u_{xx}-a(t,x)\,u_x-b(t,x)\,u(t,x)+u(1-u)$$
in $(0,+\infty)\times\R$. As already emphasized in Section~\ref{sec:results}, since $u_0$ is non-negative and non-zero, it follows from the maximum principle that $u(t,x)>0$ for all $t>0$ and $x\in\R$. Remember also that, from standard parabolic estimates, the functions $u$, $u_t$, $u_x$ and $u_{xx}$ are then H\"older continuous in $(\epsilon,+\infty)$ for every $\epsilon>0$.\par
Fix any $T>0$.  Since $u_t\le u_{xx}-a(t,x)\,u_x+Cu$ in $(0,+\infty)\times\R$ for some constant $C\ge0$, there holds
\begin{equation}\label{boundu}
u(t,x)\le\|u_0\|_\infty e^{Ct}\ \hbox{ for all }t>0\hbox{ and }x\in\R.
\end{equation}
Assume by way of contradiction that $\|u\|_{L^\infty([0,T]\times \R)}>\max\{1,(1-J)^{-1}\}$. Using the inequality $\|u\|_{L^\infty([0,T]\times \R)}>1 \geq \|u_0\|_\infty$ and equation~\eqref{boundu}, one gets the existence of a sequence $(t_n,x_n)_{n\in\N}$ in $(0,T]\times\R$ such that $u(t_n,x_n)\to\|u\|_\infty=\sup_{(0,T]\times\R}u$ as $n\to+\infty$ and $\liminf_{n\to+\infty}t_n>0$. Up to extraction of a subsequence, the functions $u_n(t,x):=u(t,x+x_n)$ converge in $C^{1;2}_{t;x;loc}((0,T]\times\R)$, to a classical solution $u_{\infty}$ of
$$
(u_\infty)_t=(u_\infty)_{xx}-a_{\infty}(t,x)\,(u_\infty)_x-b_\infty(t,x)\,u_\infty+u_\infty(1-u_\infty)
$$
in $(0,T]\times\R$, with $a_{\infty}=K*u_\infty$ and $b_\infty=(K*u_\infty)_x$.  Furthermore, for all $(t,x)\in(0,T]\times\R$, one has $0\le u_\infty(t,x)\le\|u\|_{\infty}=u_\infty(t_\infty,0)$ for some $t_\infty \in (0,T]$. At the point $(t_\infty,0)$,
\begin{equation}\label{eq:max_principle}
\begin{array}{rcl}
0 & \le & (u_\infty)_t(t_\infty,0)-(u_\infty)_{xx}(t_\infty,0)+a_\infty(t_\infty,0)\,(u_\infty)_x(t_\infty,0)\vspace{3pt}\\
& = & u_\infty(t_\infty,0)\,(1-u_\infty(t_\infty,0))-(K*u_\infty)_x(t_\infty,0)\,u_\infty(t_\infty,0).\end{array}
\end{equation}
If $J\ge0$ and $K\in L^1(\R)$ is monotonic except at $0$, $K$ is then non-decreasing in $(-\infty,0)$ and in $(0,+\infty)$. The function $u_\infty$ being itself non-negative, it follows that $-(K*u_\infty)_x(t_\infty,0)\,u_\infty(t_\infty,0)\le J\,(u_\infty(t_\infty,0))^2$, hence $0\le u_\infty(t_\infty,0)\,(1-u_\infty(t_\infty,0))+J\,(u_\infty(t_\infty,0))^2$. Since $u_\infty(t_\infty,0)=\|u\|_\infty$ was assumed to be larger than $(1-J)^{-1}\,(>0)$, one reaches a contradiction.

If $J < 0$, then similar reasoning shows that $-(K*u_\infty)_x(t_\infty,0) u_\infty(t_\infty,0) \leq 0$.  Hence~\eqref{eq:max_principle} implies $0 \leq u_\infty(t_\infty,0)(1 - u_\infty(t_\infty,0))$.  Since $u_\infty(t_\infty,0) = \|u\|_\infty$ was assumed to be larger than $1$, one reaches a contradiction.

Thus,~\eqref{ineqmax1} is proved.
\end{proof}


\subsection{The upper bound: proofs of Theorem~\ref{thm:finite_speed}-(ii) and Corollary~\ref{cor:c^*}}\label{sec:upper_bound}

\begin{proof}[Proof of part~{\rm{(ii)}} in Theorem~$\ref{thm:finite_speed}$ and Corollary~$\ref{cor:c^*}$, from Propositions~$\ref{prop:heat_kernel}$ and $\ref{prop:fokker_planck}$] We first consider the case when
$$c > 2 + \frac{\|K\|_1 \|u\|_\infty}{2}.$$
We obtain our bound by applying \Cref{prop:fokker_planck} to a suitable super-solution of $u$.  Indeed, let $\overline u$ satisfy
\begin{equation}\label{overlineu}
	\overline u_t + [(K*u)\overline u]_x = \overline u_{xx} + \overline u
\end{equation}
in $(0,+\infty)\times \R$ with $\overline u(0,\cdot) = u_0$.   Hence, together with the non-negativity of $u$, we have that $0 \leq u(t,x) \leq \overline u(t,x)$ for all $t> 0$ and all $x \in \R$.  Let $a>0$ be such that $\supp u_0 \subset [-a,a]$.  For any $t>0$, recalling~\eqref{eq:convolution_bounds} and choosing $v = K*u$ and $A = \|K\|_1\|u\|_\infty/2$, we apply \Cref{prop:fokker_planck} to $e^{-t} \overline u(t,x)$ to obtain a constant $C_K$ depending only on $K$, $\|u\|_\infty$, and $a$ such that, for all $t>1$ and $|x| \ge \|K\|_1\|u\|_\infty t/2 + a + 1$,
\[
	0\le e^{-t} u(t,x)\leq e^{-t}\overline u(t,x)\leq C_K\sqrt{t}\,e^{-(|x| - \|K\|_1 \|u\|_\infty t/2 - a)^2/4t}.
\]

Let $2\delta_c = c - 2 - \|K\|_1 \|u\|_\infty/2>0$.  Fix $T>1$ large enough such that, for all $t \geq T$, $ct > (2 + \delta_c + \|K\|_1 \|u\|_\infty /2)t + a + 1$.  Then, for $t  \geq T$ and $|x| \geq ct$, we have
\[
0\le u(t,x)\leq C_K\sqrt{t}\exp\left\{t - \frac{(2 + \delta_c)^2 t^2}{4t}\right\}=C_K\sqrt{t}\exp\left\{-\frac{(4\delta_c + \delta_c^2)t}{4}\right\}.
\]
Recalling that $\delta_c > 0$, we see that $\sup_{|x|\ge ct} u(t,x) \to 0$ as $t\to+\infty$ for $c > 2 + \|K\|_1 \|u\|_\infty/2$.

Let us now consider the case when
\begin{equation}\label{eq:c^*2}
\begin{split}
	c> \min\Bigg\{&2\sqrt{\left(1 + \frac{|J|\|u\|_\infty}{2}\right)\left(1 + \|\overline K\|_1^2 \|u\|_\infty^2 \right)},\\
		&2\inf_{\epsilon \in (0,1)} \sqrt{\left(1 + \frac{\|K\|_1^2 \|u\|_\infty^2}{16\epsilon}\right)\left(1 + \frac{\|\overline{K}\|_1^2 \|u\|_\infty^2}{1-\epsilon}\right)}
		\Bigg\}.
\end{split}
\end{equation}
The key estimate here is the heat kernel estimate in \Cref{prop:heat_kernel} that follows from a modification of the strategy of Fabes and Stroock~\cite{FabesStroock}. As in the previous paragraph, the function $\overline{u}$ solving~\eqref{overlineu} with $\overline{u}(0,\cdot)=u_0$ satisfies $0\le u(t,x) \leq \overline u(t,x)$ for all $t>0$ and all $x\in\R$.  Let $\Gamma$ be the fundamental solution given in~\eqref{eqGamma} with the choice $v(t,x) = (\overline K * u(t,\cdot))(x)$.  Thus, using also that $u_0$ has compact support included in $[-a,a]$ and ranges in $[0,1]$,

\[
		0\le u(t,x)
			\leq \overline u(t,x)
			= e^t \int_\R \Gamma(t,0,x,y)\,u_0(y)\,dy
			\leq 2\,a\, e^t \max_{y\in[-a,a]}\Gamma(t,0,x,y).
\]
As already emphasized in Section~\ref{sec:results} and~\eqref{eq:convolution_bounds}, the function $v$ is of class $C^{1;2}_{t;x}((0,+\infty)\times\R)$, with
\begin{equation}\label{boundsv}
	\|v\|_\infty\!\le A_0\!:=\|\overline{K}\|_1\|u\|_\infty,\ 
	\|v_x\|_\infty\!\le A_1\!:=\frac{\|K\|_1\|u\|_\infty}{2}
	\hbox{ and }
	\|v_{xx}\|_\infty\!\le A_2\!:=|J|\,\|u\|_\infty.
\end{equation}
It follows from formula~\eqref{eq:kernel_bound} in Proposition~\ref{prop:heat_kernel} that, for any $\delta>0$, there exists a constant $C_\delta>0$ depending on $\delta$, such that, for every $t>0$ and $x\in\R$,
$$\begin{array}{r}
\ds0\le u(t,x) \leq\frac{2\,a\,C_\delta}{\sqrt{t}}\max_{y\in[-a,a]} \exp \Bigg\{ (1+\delta)t +\min\Bigg[\ds \frac{A_2}{2}t-\frac{(x-y)^2}{4(1+\delta+A_0^2)t} ,\qquad\qquad\qquad\ \ \\
\displaystyle \inf_{\epsilon\in(0,1)}\!\!\Bigg(\frac{A_1^2t}{4\epsilon}-\frac{(x-y)^2}{4\big(1+\delta+\frac{A_0^2}{1-\epsilon}\big)t}\Bigg)\Bigg]\Bigg\}.
\end{array}$$
In the above formula, each term in the max-min is of the form
$$\frac{2\,a\,C_\delta}{\sqrt{t}}\,\exp\Big\{(1+\delta+a_1)t-\frac{(x-y)^2}{4(1 + \delta + a_2)t}\Big\}$$
for some $a_1, a_2\geq 0$. More precisely, the pairs $(a_1,a_2)$ are of the type
\begin{equation}\label{a123}
(a_1,a_2)=\left(\frac{A_2}{2},A_0^2\right),
\ds\hbox{ or }
(a_1,a_2)=\left(\frac{A_1^2}{4\epsilon},\frac{A_0^2}{1-\epsilon}\right)\hbox{ with }0<\epsilon<1.
\end{equation}
Hence, our upper bound gives us, for every $\delta>0$ and $c>0$, and for both choices $(a_1,a_2)$ above,
	\[
		0\le \sup_{|x|>ct} u(t,x)\leq \frac{2\,a\,C_\delta}{\sqrt t}\sup_{|x|>ct}\max_{y\in[-a,a]} \exp\left\{ (1+\delta+a_1)t - \frac{(x-y)^2}{4(1 + \delta + a_2)t}\right\}.
	\]
The term on the right tends to zero if
\begin{equation}\label{largespeed}
c > 2\sqrt{(1+\delta+a_1)(1 + \delta + a_2)}.
\end{equation}
Substituting the values of $(a_1,a_2)$ into the right hand side above with $(A_0,A_1,A_2)$ given in~\eqref{boundsv}, minimizing over these values $(a_1,a_2)$ and setting $\delta = 0$, yields exactly the terms in the right-hand side of~\eqref{eq:c^*2}.  If we now choose $c$ as in~\eqref{eq:c^*2}, then we may choose $\delta>0$ small enough so that~\eqref{largespeed} holds for at least one of the choices $(a_1,a_2)$ listed in~\eqref{a123}. Hence, $\sup_{|x|>ct}u(t,x)\to0$ as $t\to+\infty$ for every  $c$ satisfying~\eqref{eq:c^*2}.

The two cases above yield that $\sup_{|x|>ct} u(t,x) \to 0$ as $t \to +\infty$ for all $c> c^*$, where $c^*$ is given by~\eqref{eq:c^*}.  This finishes the proof.
\end{proof}


\subsection{The lower bounds}\label{sec:lower_bound}

In this section, we prove the lower bounds in~\Cref{thm:finite_speed}.  Afterwards, we show that when $K$ is small, these lower bounds may be bootstrapped to show that $u$ converges to $1$.

\subsubsection{Proofs of \Cref{thm:finite_speed}-(i) and Theorem~\ref{thm:localized_advection_bound}}

\begin{proof}[Proof of part~{\rm{(i)}} in Theorem~$\ref{thm:finite_speed}$, from Theorem~$\ref{thm:localized_advection_bound}$] It is immediate, by setting
$$\begin{array}{rcl}
F:C(\R)\cap L^{\infty}(\R) & \to & C^1(\R)\cap W^{1,\infty}(\R)\vspace{3pt}\\
v & \mapsto & K*v.\end{array}$$
Indeed, it is straightforward to check that $F$ is locally H\"older continuous (it is even Lipschitz continuous since $\|F(v)-F(w)\|_{W^{1,\infty}(\R)}\le(\|K\|_1+2|J|)\|v-w\|_{L^{\infty}(\R)}$ for all $v,\,w\in C(\R)\cap L^{\infty}(\R)$) and that it satisfies~\eqref{hypF} with $C_R=\|K\|_1+2\,|J|$ and $\lim_{R\to+\infty}\epsilon_R=0$, and even $\epsilon_R=0$ for $R$ large enough if $K$ is compactly supported.
\end{proof}

Therefore, we just have to prove \Cref{thm:localized_advection_bound}.  There are two key estimates that we use to prove this.  The first is a local-in-time Harnack inequality, allowing us to compare $u$ at any two points at the same time, and the second is an extension of this to the gradient of~$u$.  The first result in Lemma~\ref{lem:harnack} below is a slight generalization of the original version which appears in~\cite[Theorem~1.2]{BHR_LogDelay}, while the second result is new and relies on the first.

\begin{lem}\label{lem:harnack}
Suppose that the assumptions of Theorem~$\ref{thm:localized_advection_bound}$ hold.  For any $t_0>0$, $s_0\ge0$, $R>0$ and $p\in(1,+\infty)$, there exists a constant $C>0$, depending only on $t_0$, $s_0$, $R$, $p$, $F$, and~$n$, such that if $t \geq t_0$, $s \in [0,s_0]$, and $|x-y| \leq R$, then
\begin{equation}\label{*14}
		u(t,x) \leq C\,u(t+s,y)^{1/p}\,\max\{\|u\|_\infty^{1-1/p},\|u\|_\infty\},
\end{equation}
and
	\[
		|\nabla u(t,x)| \leq C\,u(t,y)^{1/p}\,\max\{\|u\|_\infty^{1-1/p},\|u\|_\infty\}\,(1+\|u\|_\infty).
	\]
\end{lem}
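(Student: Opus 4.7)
My plan is to view $u$ as a positive classical solution of a linear parabolic equation with bounded, H\"older continuous coefficients, then combine the classical parabolic Harnack inequality with interior gradient estimates, using the trivial bound $v \leq v^{1/p}\,M^{1-1/p}$ (valid for $0 \leq v \leq M := \|u\|_\infty$) to produce the exponent $1/p$.

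Writing $b(t,x) := F(u(t,\cdot))(x)$, equation~\eqref{eqF} reads $u_t = \Delta u - b \cdot \nabla u + (1 - \nabla \cdot b - u)\,u$. The hypothesis~\eqref{hypF}, together with $u \in L^\infty$ and the local H\"older continuity of $F$, ensures that $b$, $\nabla \cdot b$, and the zero-order coefficient $c := 1 - \nabla \cdot b - u$ are uniformly bounded and uniformly H\"older continuous on $[t_0/2, +\infty) \times \R^n$ (the H\"older continuity in $t$ of $b$ comes from the H\"older continuity of $t \mapsto u(t,\cdot)$ in $L^\infty(\R^n)$, which is a by-product of interior parabolic regularity). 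For~\eqref{*14} I then invoke the classical parabolic Harnack inequality for this linear equation: for any fixed $\tau > 0$ there is $C_0 > 0$ such that $u(t,x) \leq C_0\,u(t+s,y)$ whenever $t \geq t_0$, $s \in [\tau, s_0+\tau]$ and $|x-y| \leq R$. The missing range $s \in [0,\tau)$ is recovered from this by combining the Harnack with interior H\"older continuity of $u$ in time, exactly as in~\cite[Theorem~1.2]{BHR_LogDelay}. Writing $u(t+s,y) \leq u(t+s,y)^{1/p}\,M^{1-1/p} \leq u(t+s,y)^{1/p}\,\max\{M^{1-1/p}, M\}$ then delivers~\eqref{*14}.

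For the gradient bound I combine~\eqref{*14} with interior parabolic gradient estimates on the same linear equation. Choosing $r > 0$ small enough that $Q := (t - r^2, t] \times B_r(x) \subset [t_0/2, +\infty) \times \R^n$ and rescaling $\tilde u := u/M$ to normalise $\|\tilde u\|_\infty = 1$, the zero-order coefficient of the equation satisfied by $\tilde u$ is bounded in absolute value by a constant multiple of $1+M$ (since $|c| \leq 1 + |\nabla \cdot b| + u = O(1+M)$), so standard parabolic Schauder estimates give $|\nabla u(t,x)| \leq C_1\,(1+M)\,\sup_Q u$. Applying~\eqref{*14} to each $(t',x') \in Q$, with $s = t - t' \in [0, r^2]$ and $|x'-y| \leq R + r$, yields $\sup_Q u \leq C_2\,u(t,y)^{1/p}\,\max\{M^{1-1/p}, M\}$, and composition of the two bounds produces the required inequality. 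The main technical obstacle is the small-$s$ regime in~\eqref{*14}, where the parabolic Harnack requires a strictly positive time gap and one must interpolate via interior H\"older regularity in order to reach $s = 0$; a secondary, more routine issue is verifying the linear $(1+M)$ factor in the gradient estimate, which is achieved cleanly by the rescaling $u = M\tilde u$ above.
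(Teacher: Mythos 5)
Your plan breaks down at exactly the point you flag as ``the main technical obstacle,'' and the fix you propose there does not work. The classical parabolic Harnack inequality gives a \emph{linear} comparison $u(t,x)\le C_0\,u(t+s,y)$ only with a strictly positive time lag $s\ge\tau>0$; at equal times (or with $s$ arbitrarily small) such a linear bound is simply false in this setting. Indeed, for $F\equiv 0$ and compactly supported initial data, $u(t_0,\cdot)$ has genuinely Gaussian tails (this follows from the two-sided bounds of Lemma~\ref{lem:weak_heat_kernel} and Proposition~\ref{prop:fokker_planck}), so $u(t_0,x)/u(t_0,y)\to+\infty$ as $|x|\to+\infty$ along $|x-y|=R$; no constant $C_0$ uniform in $x,y$ can exist for $s=0$. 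Your proposed repair --- interpolating through interior H\"older continuity of $u$ in time --- produces only \emph{additive} errors of size $O(\tau^\beta)$, which are useless precisely where the lemma is needed in the proof of Theorem~\ref{thm:localized_advection_bound}, namely where $u$ is exponentially small near the front: an additive error cannot be absorbed into a multiplicative bound there. Consequently the step ``$u(t+s,y)\le u(t+s,y)^{1/p}M^{1-1/p}$ then delivers~\eqref{*14}'' puts the exponent $1/p$ in as a cosmetic afterthought, whereas in fact the power loss is the essential mechanism that makes a same-time comparison possible at all, and it has to be \emph{generated} by the proof, not appended at the end. (The appeal to \cite{BHR_LogDelay} does not rescue this: that local-in-time Harnack inequality is itself proved by heat-kernel comparisons of the type below, not by the classical Harnack plus H\"older regularity, and the present lemma moreover needs the extra forward shift $s\in[0,s_0]$, which is precisely what is used later for the gradient bound.)

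The paper's route shows what is actually required: after sandwiching $u$ between multiples of the reaction-free solution $\overline u$ on $(t-\delta,+\infty)$, one writes $\overline u(t,x)=\int\Gamma(t,t-\delta,x,z)u(t-\delta,z)\,dz$, splits the kernel by H\"older's inequality as $\Gamma^{\alpha p}\cdot\Gamma^{(1-\alpha)q}$ with $\alpha=(1+p)/(2p)$ (so $\alpha<1$ but $\alpha p>1$), and then proves the uniform kernel comparison $\Gamma(t,t-\delta,x,z)^{\alpha p}\le C_0\,\Gamma(t+s,t-\delta,y,z)$ for all $z$ and $s\in[0,s_0]$, using the Gaussian-sharp upper and lower bounds of Hill (Lemma~\ref{lem:weak_heat_kernel}); the condition $\alpha p>1$ is exactly what makes the ratio of Gaussian tails bounded, and this is where $1/p$ truly enters. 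Your second step (the gradient bound) is structurally fine \emph{given}~\eqref{*14} with the forward shift, but note two smaller points: it needs~\eqref{*14} on the whole cylinder, i.e.\ for shifts $s\in[0,r^2]$ including $s$ near $0$, so it inherits the gap above; and invoking Schauder estimates requires H\"older continuity of $\nabla\cdot F(u(t,\cdot))$ in $x$, which the hypothesis $F:C(\R^n)\cap L^\infty(\R^n)\to(C^1(\R^n)\cap W^{1,\infty}(\R^n))^n$ does not provide --- the paper instead uses local $L^q$ estimates together with an anisotropic Sobolev embedding, which only need bounded coefficients.
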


We now show how to conclude \Cref{thm:localized_advection_bound} from \Cref{lem:harnack}.  The proof of \Cref{lem:harnack} is in \Cref{sec:harnack}.

\begin{proof}[Proof of Theorem~$\ref{thm:localized_advection_bound}$ from Lemma~$\ref{lem:harnack}$]

First of all, the general assumptions on $F$ and the boundedness of $u$ imply that $\sup_{t>0}\|F(u(t,\cdot))\|_{(W^{1,\infty}(\R^n))^n}<+\infty$. From standard parabolic estimates, it follows that all functions $u$, $u_t$ , $u_{x_i}$ and $u_{x_ix_j}$ belong to $L^q_{loc}((0,+\infty)\times\R^n)$ for every $q\in[1,+\infty)$ and $1\le i,j\le n$, and furthermore that $u$ and $\nabla u$ are H\"older continuous in $[\epsilon,+\infty)\times\R^n$ for every $\epsilon>0$. Since $\|F(u(t,\cdot))-F(u(t',\cdot))\|_{(W^{1,\infty}(\R^n))^n}\le C\|u(t,\cdot)-u(t',\cdot)\|_{L^{\infty}(\R^n)}^{\alpha}$ for some  $C>0$, $\alpha\in(0,1)$ and for all $t,t'\in(0,+\infty)$, and since $\sup_{t>0}\|F(u(t,\cdot))\|_{(W^{1,\infty}(\R^n))^n}<+\infty$, one gets that the functions $(t,x)\mapsto F(u(t,\cdot))(x)$ and $(t,x)\mapsto \nabla\cdot F(u(t,\cdot))(x)$ are H\"older continuous in $[\epsilon,+\infty)\times\R^n$ for every $\epsilon>0$. Finally, Schauder parabolic estimates imply that all functions $u$, $u_t$ , $u_{x_i}$ and $u_{x_ix_j}$ are H\"older continuous in $[\epsilon,+\infty)\times\R^n$ for every $\epsilon>0$. From the strong parabolic maximum principle, one also has $u>0$ in $(0,+\infty)\times\R^n$.\par
Fix any $c \in (0,2)$.  Using the bounds~\eqref{hypF} for $\nabla\cdot F(u(t,\cdot))$ along with the results of \Cref{lem:harnack} applied with $u$, $t_0=1/2$, $s_0 = 0$, and $p=3/2$, we see that, for any $R>0$ large enough, $u$ satisfies
\begin{equation}\label{eq:u_super}
	u_t - \Delta u +F(u(t,\cdot))(x)\cdot\nabla u \geq u(1-\tilde{\epsilon}_R - u -  C_R u^{2/3})
\end{equation}
in $[1,+\infty)\times\R^n$, where $C_R$ is a positive constant depending only on $R$, $F$, and $\|u\|_\infty$ (and therefore $C_R$ depends only on $F$ and $\|u\|_\infty$), and $\tilde{\epsilon}_R=\epsilon_R\|u\|_{\infty}\to0$ as $R\to+\infty$. In the rest of the proof, we fix $R>0$ large enough so that~\eqref{eq:u_super} holds and $2\sqrt{1 - \tilde{\epsilon}_R} > c$, and we also choose $\eta>0$ such that
$$(c+\eta)^2<4(1-\tilde{\epsilon}_R).$$\par
In order to build a sub-solution to~\eqref{eq:u_super}, we define, for $(t,x)\in[1,+\infty)\times\R^n$,
\[
	A(t,x) = \frac{ F(u(t,\cdot))(x)}{ \max\{ 1, |F(u(t,\cdot))(x)|\,\eta^{-1} \}}.
\]
Notice that $A:[1,+\infty)\times\R^n\to\R^n$ is H\"older continuous and $|A(t,x)|\leq \eta$ for all $(t,x)\in[1,+\infty)\times\R^n$.  This advection cut-off is, to our knowledge, novel. Let then $R'>0$ be such that $\epsilon_{R'}\|u\|_{\infty}<\eta/2$. The assumptions~\eqref{hypF} on $F$ and Lemma~\ref{lem:harnack} (applied now with power $p=2$) yield the existence of a constant $C'>0$ (depending on $R'$, $F$ and $\|u\|_\infty$, and therefore on $c$, $F$ and $\|u\|_\infty$) such that
\begin{equation}\label{eq:small_F}
	|F(u(t,\cdot))(x)|\le C'\sqrt{u(t,x)}+\frac{\eta}{2}\ \hbox{ for all }(t,x)\in[1,+\infty)\times\R^n.
\end{equation}
Then, fix $M>1$ such that
\begin{equation}\label{defM}
C'\sqrt{\frac{\|u(1,\cdot)\|_{\infty}+1}{M}}<\frac{\eta}{2}
\end{equation}
and define $\underline u$ to solve
\[\begin{cases}
	\underline u_t - \Delta\underline u + A(t,x)\cdot\nabla\underline u = \underline u( 1 - \tilde{\epsilon}_R - M\underline{u} - C_R \underline u^{2/3}), \quad &\text{ in } [1,+\infty)\times\R^n,\\
	\underline u(1,\cdot)= u(1,\cdot)/M.
\end{cases}\]
Notice that, from standard parabolic estimates, $\underline{u}$ is a classical solution in $[1,+\infty)\times\R^n$ and that all functions $\underline{u}$, $\underline{u}_t$, $\underline{u}_{x_i}$ and $\underline{u}_{x_ix_j}$ are H\"older continuous in $[1,+\infty)\times\R^n$, while
\begin{equation}\label{underlineu}
0\le\underline{u}(t,x)
	<\max\left( \frac{\|u(1,\cdot)\|_\infty}{M}, \frac{1}{M}\right)
	<\frac{\|u(1,\cdot)\|_{\infty}+1}{M}\ \hbox{ for all }(t,x)\in[1,+\infty)\times\R^n
\end{equation}
from the maximum principle.\par
Now we show that we may compare $\underline u$ and $u$. To do so, observe that, for every $(t,x)\in[1,+\infty)\times\R^n$ such that $u(t,x)<(\|u(1,\cdot)\|_{\infty}+1)/M$, one has $|F(u(t,\cdot))(x)|<\eta$ due to~\eqref{eq:small_F} and~\eqref{defM}; hence, $A(t,x)=F(u(t,\cdot))(x)$. Together with~\eqref{underlineu} and the definition of~$\underline{u}$, it follows that $u$ and $\underline u$ satisfy the same equation locally around any point where they could ``touch.''  Thus, one concludes from the maximum principle that
$$0\le\underline{u}(t,x)\le u(t,x)\ \hbox{ for all }(t,x)\in[1,+\infty)\times\R^n.$$
Since the positive real numbers $\tilde{\epsilon}_R$, $c$, and $\eta$ are such that $(c+\eta)^2<4(1-\tilde{\epsilon}_R)$ and since $|A|\le\eta$ in $[1,+\infty)\times\R^n$, we may now apply the results of~\cite[Theorem~1.2]{BerestyckiHamelNadin}, which imply that
\[
	\liminf_{t\to+\infty} \inf_{|x| < ct} u(t,x)
		\geq \liminf_{t\to+\infty} \inf_{|x| < ct} \underline u(t,x)
		\geq \delta,
\]
where $\delta>0$ is the unique positive real number solving $1 - \tilde{\epsilon}_R = M \delta + C_R \delta^{2/3}$. The above arguments imply that $\delta$ depends on $c$, $F$, and $\|u\|_\infty$.

To conclude, we need to consider the case when $\epsilon_R = 0$ in~\eqref{hypF} for all $R>0$ large enough.  Hence, in the above calculations,
$\tilde{\epsilon}_R=\epsilon_R\|u\|_\infty=0$ and
$$	u_t - \Delta u +F(u(t,\cdot))(x)\cdot\nabla u \geq u(1 - u -  C_R u^{2/3})$$
in $[1,+\infty)\times\R^n$. Using again the bounds~\eqref{hypF} for $F(u(t,\cdot))$ along with the results of \Cref{lem:harnack} applied (twice) with $u$, $t_0=1/2$, $s_0=0$, and $p=3/2$, one gets the existence of a positive constant $C'_R$ depending only on $F$ and $\|u\|_\infty$ such that
$$|F(u(t,\cdot))(x)\cdot\nabla u(t,x)|\le C'_Ru(t,x)^{4/3}\ \hbox{ for all }(t,x)\in[1,+\infty)\times\R^n.$$
Hence,
$$	u_t - \Delta u\geq u(1 - u -  C_R u^{2/3}-C'_Ru^{1/3})$$
in $[1,+\infty)\times\R^n$. It then follows from the maximum principle that $u(t,x)\ge\underline{v}(t,x)$ for all $(t,x)\in[1,+\infty)\times\R^n$, where $\underline{v}$ is the classical solution of
\[\begin{cases}
	\underline v_t - \Delta\underline v = \underline v( 1 - \underline{v} - C_R \underline v^{2/3}-C'_R\underline{v}^{1/3}), \quad &\text{ in } [1,+\infty)\times\R^n,\\
	\underline v(1,\cdot)= u(1,\cdot).
\end{cases}\]
This, along with the results of~\cite{Gartner}, implies that
\[
	\liminf_{t\to+\infty}\inf_{|x| < 2t - (n+1/2) \log(t)} u(t,x)
		\geq \liminf_{t\to+\infty}\inf_{|x| < 2t - (n+1/2) \log(t)} \underline v(t,x)>0.
\]
The proof of Theorem~\ref{thm:localized_advection_bound} is thereby complete.
\end{proof}


\subsubsection{Convergence to $1$ when $K$ is small}

In many cases, a weak bound on the infimum may be bootstrapped to convergence to $1$.  This occurs, in particular, when there are non-local terms in the reaction term and when the non-local terms are small enough.  This argument is an adaptation from the work of Salako and Shen~\cite{SalakoShen1, SalakoShen2}.

\begin{corollary}\label{cor:convergence_to_1}
Suppose that the assumptions of Theorem~$\ref{thm:finite_speed}$ hold and $J< 1/2$. Then, for any $c\in(0,2)$, we have that
	\[
		\lim_{t\to+\infty} \sup_{|x| <ct} |u(t,x) - 1| = 0.
	\]
	If $K$ is compactly supported, then
	\[
		\lim_{\substack{(t,L)\to(+\infty,+\infty),\\L<2t-(3/2)\log(t)}}\ \sup_{|x| < 2t - (3/2)\log(t) - L} |u(t,x) - 1| = 0.
	\]
\end{corollary}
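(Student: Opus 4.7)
The strategy is to argue by contradiction, using parabolic compactness to extract an entire-in-time bounded classical solution $u_\infty$ on $\R\times\R$, and then to establish a Liouville-type statement: any such $u_\infty$ with $\inf u_\infty>0$ must be identically $1$ when $J<1/2$.

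Suppose the first claim fails: there exist $\eta>0$, $c\in(0,2)$, and sequences $t_n\to+\infty$, $|x_n|<ct_n$ with $|u(t_n,x_n)-1|\ge\eta$. Set $u_n(t,x):=u(t+t_n,x+x_n)$ and fix $c'\in(c,2)$. For any bounded set $B$ in $(t,x)$, one has $|x+x_n|<c'(t+t_n)$ for $n$ large, so Theorem~\ref{thm:finite_speed}-(i) gives $u_n\ge\delta>0$ on $B$. Standard parabolic estimates, together with the uniform bound $\|u\|_\infty\le M_0:=\max\{1,(1-J)^{-1}\}$ from Lemma~\ref{lemmax}, provide uniform H\"older regularity of $u_n$ and of its first and second derivatives. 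Extracting a subsequence yields $u_n\to u_\infty$ in $C^{1;2}_{t;x;loc}(\R\times\R)$. By the monotonicity of $K$ on each side, its distributional derivative decomposes as $K'=K'_{\mathrm{reg}}-J\delta_0$ with $K'_{\mathrm{reg}}\in L^1(\R)$, $\int K'_{\mathrm{reg}}=J$, and $K'_{\mathrm{reg}}$ of constant sign on $\R^*$. Dominated convergence then gives $K*u_n\to K*u_\infty$ and $(K*u_n)_x\to(K*u_\infty)_x$ locally uniformly, so $u_\infty$ is an entire-in-time bounded classical solution of~\eqref{eq:the_equation} with $\delta\le u_\infty\le M_0$ and $|u_\infty(0,0)-1|\ge\eta$.

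To derive the contradiction, set $m:=\inf u_\infty$ and $M:=\sup u_\infty$, and pick $(s_k,y_k)$ with $u_\infty(s_k,y_k)\to m$. The translates $v_k(t,x):=u_\infty(t+s_k,x+y_k)$ admit, along a subsequence, a $C^{1;2}_{t;x;loc}$-limit $v$ which is an entire solution of~\eqref{eq:the_equation} with $v\ge m$ and $v(0,0)=m$. At this interior minimum $v_t(0,0)=v_x(0,0)=0$ and $v_{xx}(0,0)\ge 0$; substituting into~\eqref{eq:the_equation} yields $(K*v)_x(0,0)\ge 1-m$. Writing $(K*v)_x(0,0)=\int K'_{\mathrm{reg}}(-y)\,v(0,y)\,dy-Jm$ and using the constant sign of $K'_{\mathrm{reg}}$, one obtains in the positive-chemotaxis case $J\ge 0$ (where $K'_{\mathrm{reg}}\ge 0$) the bound $(K*v)_x(0,0)\le J(M-m)$, hence $(1-J)m+JM\ge 1$. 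A symmetric argument at a translate $w$ of $u_\infty$ attaining its supremum (so $w\le M$, $w(0,0)=M$) yields $(1-J)M+Jm\le 1$. Subtracting produces $(1-2J)(m-M)\ge 0$; since $J<1/2$, we conclude $m=M$, and plugging the constant $u_\infty\equiv m$ into~\eqref{eq:the_equation} forces $m(1-m)=0$ (since $K*m=m\int K=0$), so $m=1$. In the negative-chemotaxis case $J<0$ (where $K'_{\mathrm{reg}}\le 0$), the min step alone gives $(K*v)_x(0,0)\le 0$ and so $m\ge 1$; combined with $M\le M_0=1$ from Lemma~\ref{lemmax}, this yields $u_\infty\equiv 1$. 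Either way $u_\infty\equiv 1$, contradicting $|u_\infty(0,0)-1|\ge\eta$.

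The compactly supported case follows by the same scheme, using sequences $(t_n,L_n,x_n)$ with $L_n\to+\infty$, $L_n<2t_n-(3/2)\log t_n$, $|x_n|<2t_n-(3/2)\log t_n-L_n$, and $|u(t_n,x_n)-1|\ge\eta$; the choice $L_n\to+\infty$ guarantees that for any bounded set in $(t,x)$ and $n$ large the point $(t+t_n,x+x_n)$ lies in the region provided by the second lower bound of Theorem~\ref{thm:finite_speed}-(i), so $u_n\ge\delta$ there, and the Liouville step applies without change. The central technical point is the evaluation of $(K*v)_x$ at the extrema of $v$: it rests on the distributional identity $K'=K'_{\mathrm{reg}}-J\delta_0$ and on the constant sign of $K'_{\mathrm{reg}}$ from the monotonicity of $K$, while the hypothesis $J<1/2$ enters precisely through the positivity of the coefficient $1-2J$, which makes the sandwich $(1-J)m+JM\ge 1\ge(1-J)M+Jm$ force $m=M$.
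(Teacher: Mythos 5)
Your proposal is correct and follows essentially the same route as the paper's proof: parabolic compactness to produce an entire limiting solution with a positive lower bound from Theorem~\ref{thm:finite_speed}-(i), re-centering at points where the infimum and supremum are attained, and then the two affine inequalities $(1-J)m+JM\ge 1\ge (1-J)M+Jm$ (resp.\ the minimum inequality together with \Cref{lemmax} when $J<0$), with $J<1/2$ forcing $m=M$ and hence $u_\infty\equiv 1$. The only differences are cosmetic: the paper works with the Radon measure $dK$ rather than an $L^1$ density $K'_{\mathrm{reg}}$ (a monotone $K$ need not be absolutely continuous, though this changes nothing in the estimates), and it combines the two extremal inequalities slightly differently, your direct subtraction being an equivalent and somewhat cleaner piece of algebra.
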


\begin{proof}
Let $X(t,L) =2t - (3/2)\log(t) - L$ or $ct$ with $c\in(0,2)$ depending on whether $K$ is compactly supported or not.  Take any sequences $(t_n)_{n\in\N}$ in $(0,+\infty)$, $(L_n)_{n\in\N}$ in $(0,+\infty)$, and $(x_n)_{n\in\N}$ in $\R$ such that $t_n$ and $L_n$ converge to $+\infty$, $X(t_n,L_n) > 0$, and $|x_n| < X(t_n,L_n)$.  Define
\[
	u_n(t,x) = u(t+t_n, x + x_n).
\]
We are finished if we show that $\lim_{n\to+\infty} u_n(0,0) = 1$.\par
As already emphasized in Section~\ref{sec:results}, since $u$ is bounded uniformly in $L^\infty$, parabolic regularity theory implies that $u$ is bounded in $C^{1+\alpha/2;2+\alpha}_{t;x}([\epsilon,+\infty)\times\R^n)$ for any $\alpha \in (0,1)$ and $\epsilon>0$.  Hence, up to extracting a sub-sequence, $u_n$ converges to some limit $u_\infty$ in $C^{1;2}_{t;x;loc}(\R\times\R)$. We observe that, due to \Cref{thm:finite_speed},
\[
	0 < \inf_{(t,x) \in\R\times\R} u_\infty(t,x) \leq \sup_{(t,x) \in \R\times \R} u_\infty(t,x) < +\infty.
\]
In addition, we have that
\begin{equation}\label{eq:u_infty}
	(u_\infty)_t + [(K*u_\infty) u_\infty]_x = (u_\infty)_{xx} + u_\infty(1-u_\infty),\ \ \hbox{ in }\R\times\R.
\end{equation}
In order to finish, we show that $\underline u := \inf u>0$ and $\overline u := \sup u>0$ satisfy $\underline u = \overline u = 1$.  The inequality $\underline u>0$ is due to \Cref{thm:finite_speed}. To that end, up to re-centering the equation and taking another limit, we may assume without loss of generality that there exists $(t_i,x_i)$ such that $\underline u = u_\infty(t_i,x_i)$ and that there exists $(t_s,x_s)$ in $\R\times\R$ such that $\overline u = u_\infty(t_s,x_s)$. From here, we handle the cases $J< 0$ and $J \in [0,1/2)$ separately.

First, assume that $J \in [0,1/2)$. At $(t_i,x_i)$, we claim that
\begin{equation}\label{tixi}\begin{array}{rcl}
0 \geq (u_\infty)_t - (u_\infty)_{xx} + (K*u_\infty) (u_\infty)_x & = & u_\infty(1-u_\infty) - (K*u_\infty)_x u_\infty\vspace{3pt}\\
& \geq &\underline u(1-\underline u) + \underline u J(\underline u - \overline u).\end{array}
\end{equation}
Indeed, since $J\ge0$ and $K$ is thus non-decreasing in $(-\infty,0)$ and $(0,+\infty)$, the last inequality is a consequence of the following one:
\[
	-(K*u_\infty)_x(t_i,x_i)
		= J\,u_\infty(t_i,x_i) - \int_{-\infty}^0 [u_\infty(t_i,x+y) + u_\infty(t_i,x-y)]\,dK(y)
		\geq J\underline u - J \overline u,
\]
where by $\mu:=dK(y)$ we mean the Radon measure such that $\mu((a,b))=\lim_{x\to b^-}K(x)-\lim_{x\to a^+}K(x)$ and $\mu(\{c\})=\lim_{x\to c^+}K(x)-\lim_{x\to c^-}K(x)$ for every $-\infty\le a<b\le+\infty$ and $c\in\R$. Since $\underline u>0$,~\eqref{tixi} yields the inequality $1\le J \overline u + (1-J)\underline u$, which we use later. Similarly, we may argue that at $(t_s,x_s)$ we have
$$0 \leq (u_\infty)_t - (u_\infty)_{xx} + (K*u_\infty) (u_\infty)_x=u_\infty(1-u_\infty) - (K*u_\infty)_x u_\infty\leq\overline u(1-\overline u) + \overline u J(\overline u - \underline u).$$
The above inequality and~\eqref{tixi} give us $\underline u - \underline u^2 + \underline u J(\underline u - \overline u)\leq 0\le \overline u - \overline u^2 + \overline u J(\overline u - \underline u)$, hence
\[
	(1-J) (\overline u + \underline u)(\overline u - \underline u)
		= (1-J)\left(\overline u^2 - \underline u^2\right)
		\leq \overline u - \underline u.
\]
We claim that $\overline u = \underline u$.  If this is not true, then dividing by $\overline u - \underline u>0$ above leads to $(1-J)\,(\overline{u}+\underline{u})\le1$, and using the inequality $1\le J \overline u + (1-J) \underline u$  implies that
\[
	(1-2J)\,\overline u \leq 0.
\]
Since $\overline{u}>0$ and $J < 1/2$, by assumption, this is a contradiction.  Hence $\overline u = \underline u$. Therefore, $u_\infty$ is uniformly equal to a positive constant.  Since $K \in L^1(\R)$ is odd, then $K*u_\infty \equiv 0$.  Hence, from~\eqref{eq:u_infty}, the constant $u_\infty$ satisfies $u_\infty(1-u_\infty)=0$, with $u_{\infty}>0$.  We conclude that $u_\infty \equiv 1$, finishing the proof of the case when $J \in [0,1/2)$.

When $J< 0$, the argument is similar, but less complicated.  At $(t_i,x_i)$, we claim that
\begin{equation}\label{eq:underline_u_bound}
	0 \geq (u_\infty)_t - (u_\infty)_{xx} + (K*u_\infty)(u_\infty)_x= u_\infty(1-u_\infty) - (K*u_\infty)_xu_\infty\geq \underline u(1-\underline u).
\end{equation}
Indeed, since $J < 0$ and $K$ is thus non-increasing in $(-\infty,0)$ and $(0,+\infty)$,
\[\begin{array}{rcl}
	-(K*u_\infty)_x(t_i,x_i) & = & \ds J\,u_\infty(t_i,x_i) - \int_{-\infty}^0 [u_\infty(t_i,x+y) + u_\infty(t_i,x-y)]\,dK(y)\vspace{3pt}\\
	& \ge & \ds J\,\underline{u}+2\underline{u}\int_{-\infty}^0-dK(y)=0.\end{array}
\]
Hence, from~\eqref{eq:underline_u_bound} along with the positivity of $\underline u$, it follows that $\underline u \geq 1$. Hence, using also \Cref{lemmax},
\[
	1 \leq \underline u \leq \overline u \leq 1,
\]
finishing the proof.
\end{proof}


\section{Negative chemotaxis with large tails}\label{sec:acceleration}

In this section we prove Proposition~\ref{prop:well_posed} in Subsection~\ref{sec:well_posed} and then \Cref{thm:acceleration} in the following three subsections.  We handle the three cases separately.  We recall that, throughout this section, $J\le 0$ and $K$ is non-increasing expect at $0$, non-positive on $(-\infty,0)$, and non-negative on $(0,+\infty)$.


\subsection{Well-posedness of the model in \Cref{thm:acceleration}: proof of Proposition~\ref{prop:well_posed}}\label{sec:well_posed}

In this section, our goal is to show that {\em a priori} bounds on $u$ may be established in the case when $K \in L^\infty(\R)$ and is non-increasing except at $x = 0$, non-positive on $(-\infty,0)$, and non-negative on $(0,+\infty)$.  We aim to give enough of a treatment that the reader is re-assured that the problem is well-posed, without belaboring the point.

We first claim that the Cauchy problem~\eqref{eq:the_equation}-\eqref{hypu0} is locally well-posed in spaces with suitable decay at $|x| = +\infty$. This may easily be justified by rigorously constructing a unique strong solution $u$ in $(0,T]\times\R$, via the Banach fixed point in sets of the type
$$E_T=\big\{u\in C([0,T];L^1(\R)): u(0,\cdot)=u_0\hbox{ and } 0\le u(t,x)\le C\,e^{\gamma t-|x|}\hbox{ in }(0,T]\times\R\big\}$$
endowed with the norm $\|u\|_{E_T}=\max_{t\in[0,T]}\|u(t,\cdot)\|_{L^1(\R)}+\|u(t,x)\,e^{-\gamma t+|x|}\|_{L^{\infty}((0,T)\times\R)}$, with a constant $C>0$ depending on $u_0$, and some constants $\gamma>0$ large and $T>0$ small.

It then follows that the functions $K*u$ and $(K*u)_x$ are bounded in $(0,T)\times\R$. Local parabolic regularity in Sobolev spaces (see e.g.~\cite[Theorem~7.22]{Lieberman}), along with Sobolev embedding theorems (see e.g.~\cite[Lemma A3]{Engler}), implies that $u \in C^{(1+\alpha)/2; 1+\alpha}_{t;x}([\epsilon,T]\times \R)$ for all $\alpha \in (0,1)$ and $\epsilon\in(0,T)$.  Examining the form of $(K*u)_x$, it is clear that it is $C^{(1+\alpha)/2;1+\alpha}_{t;x}([\epsilon,T]\times \R)$ since $u$ is and $K$ has bounded variation. In particular, the function $K*u$ is H\"older continuous with respect to $x$ in $[\epsilon,T]\times\R$. 

We now  prove H\"older regularity in time.  Due to the heat kernel bounds of Aronson, \cite[Theorem 10]{Aronson}, which crucially do not require more regularity than $L^\infty$ estimates on the coefficients of the equation, and the compact support of $u_0$, there is a constant $C_T>0$ so that
\begin{equation}\label{gaussian}
	u(t,x) \leq \frac{C_T}{\sqrt t}\,e^{- \frac{x^2}{C_T t}}
\end{equation}
for all $(t,x) \in(0,T]\times \R$.  Fix $\alpha \in (0,1)$, $\epsilon\in(0,T]$ and consider $t_1, t_2 \in [\epsilon,T]$.  We may assume without loss of generality that $0<|t_1 - t_2|<1$ since a bound on $|K*u(t_1,x) - K*u(t_2,x)|/|t_1-t_2|^\alpha$ follows from the $L^\infty((0,T)\times\R)$ bound on $K*u$ if $|t_1-t_2|\ge 1$. Fix $\alpha'\in(\alpha,1)$ and $R=\sqrt{-\alpha'T C_T \log|t_1 - t_2|}>0$. Then we have that
\[\begin{split}
	&|K*u(t_1,x) - K*u(t_2,x)|
		\leq \int_\R |K(y)| |u(t_1,x-y) - u(t_2,x-y)| dy\\
		&\leq \|K\|_\infty \left[ \int_{|y - x| > R} |u(t_1,x-y) - u(t_2,x-y)|dy + \int_{|y-x|\leq R} |u(t_1,x-y) - u(t_2,x-y)|dy \right]\\
		&\leq \|K\|_\infty \left[\int_{|y| \geq R} \left(\frac{C_T}{\sqrt t_1} e^{-y^2/C_Tt_1} + \frac{C_T}{\sqrt t_2} e^{-y^2/C_Tt_2}\right) dy +  2\|u\|_{C^{\alpha';2\alpha'}_{t;x}([\epsilon,T]\times \R)}|t_1-t_2|^{\alpha'}R\right]\\
		&\leq 2C_T^2 \|K\|_\infty \left[ \sqrt t_1 \frac{e^{-R^2/C_Tt_1}}{R} +  \sqrt t_2 \frac{e^{-R^2/C_Tt_2}}{R} + \|u\|_{C^{\alpha';2\alpha'}_{t;x}([\epsilon,T]\times \R)} |t_1 - t_2|^{\alpha'} R \right].
\end{split}\]
Using that $t_1,t_2 \leq T$ as well as the explicit expression for $R$, and absorbing all factors into the constant $C_T$, including $\|K\|_\infty$ and $\|u\|_{C^{\alpha';2\alpha'}_{t;x}([\epsilon,T]\times \R)}$, we obtain
\[
	|K*u(t_1,x) - K*u(t_2,x)|
		\leq C_T
			\left( \frac{|t_1 - t_2|^{\alpha'}}{\sqrt{-\alpha'\log|t_1-t_2|}} + |t_1 - t_2|^{\alpha'} \sqrt{-\alpha'\log|t_1-t_2|}\right)
\]
Using that $\alpha' > \alpha$, this simplifies to the desired inequality:
\[
	|K*u(t_1,x) - K*u(t_2,x)|
		\leq C_T |t_1-t_2|^{\alpha}.
\]

Since we have shown that all the coefficients in the equation for $u$ are H\"older continuous in $[\epsilon,T]\times\R$ for every $\epsilon\in(0,T)$ and with every exponent $\alpha\in(0,1)$, it follows from the classical Schauder estimates for parabolic equations that $u \in C^{1 +\alpha/2; 2+\alpha}_{t;x}([\epsilon,T]\times \R)$ for every $\epsilon\in(0,T)$ and $\alpha\in(0,1)$, see e.g.~\cite[Theorem~4.9]{Lieberman}. Notice also that the Schauder estimates imply that, for every $\epsilon\in(0,T)$, there is a constant $C_{\epsilon,T}>0$ such that
$$|u_t(t,x)|+|u_x(t,x)|+|u_{xx}(t,x)|\le C_{\epsilon,T}\,e^{-|x|}\ \hbox{ for all }(t,x)\in[\epsilon,T]\times\R.$$
In addition, since the functions $K*u$ and $(K*u)_x$ are bounded in $(0,T)\times\R$, since $0$ is a sub-solution and since $J\le0$ here, a maximum principle argument as in \Cref{lemmax} implies that
$$0<u(t,x) \leq 1\ \hbox{ for all }(t,x)\in(0,T]\times \R.$$
Furthermore, the function $P$ defined in~\eqref{defP}, which is by construction continuous in $[0,T]$, is actually of class $C^1((0,T])$ and, by integrating the equation over $\R$ for any $t\in(0,T]$, one infers that
\begin{equation}\label{eq:P_V}
P'(t)=\int_\R u(t,x)\,(1-u(t,x))\,dx\le P(t)
\end{equation}
for all $t\in(0,T]$, hence $P(t)\le P(0)\,e^t=\|u_0\|_1e^t$ for all $t\in[0,T]$. Since $K \in L^\infty(\R)$, then we obtain the bound
\[
	|K*u(t,x)|
		\leq \|K\|_\infty\,\|u_0\|_1\,e^t
\]
for all $(t,x)\in[0,T]\times\R$. Thus, we also obtain that
\begin{equation}\label{ineqK*u}
	|(K*u)_x(t,x)|
		= \left|-J u(t,x) + \int_{-\infty}^0 [u(x-y) + u(x+y)]\,dK(y)\right|
		\leq 2\,|J|
\end{equation}
for all $(t,x)\in(0,T]\times\R$.

The above upper bounds for $u$, $|K*u|$ and $|(K*u)_x|$ do not depend on $T$ and it then follows from standard arguments that the maximal existence time for the solution $u$ is $+\infty$ (hence, $u$ being locally unique is globally unique) and that all above estimates hold for any $T>0$. The proof of Proposition~\ref{prop:well_posed} is thereby complete.\hfill$\Box$


\subsection{The case when $\displaystyle\lim_{x \to +\infty} K(x) = K_\infty > 0$: proof of \Cref{thm:acceleration}-(i)}

\begin{proof}[Proof of Theorem~$\ref{thm:acceleration}$-{\rm{(i)}}]
The upper bound follows from the proof of Proposition~\ref{prop:well_posed}: the function $P$ defined by~\eqref{defP} is continuous in $[0,+\infty)$, of class $C^1((0,+\infty)$, and it satisfies
\begin{equation}\label{ineqP}
P(t)\le P(0)\,e^t\ \hbox{ for all }t\ge0
\end{equation}
(notice that the above inequality holds when $K_{\infty}=0$ as well).

To proceed with the lower bound, we first obtain a refined pointwise upper bound on $u$.  To this end, fix any $\e \in(0,2K_\infty)$ and let
$$w(t,x) = e^{\epsilon t} u(t,x)$$
for $(t,x)\in[0,+\infty)\times\R$.  Notice that $w$ is non-negative, since $u$ is, and solves
\[
	w_t - \epsilon w + (K*u) w_x + (K*u)_x w = w_{xx} + w( 1 - e^{-\epsilon t} w)\quad\hbox{ in }(0,+\infty)\times\R,
\]
with $w(0,x) = u_0(x)$ for all $x\in\R$.  Let $T_\epsilon = \e^{-1}\log(1 + 2K_\infty)>0$ and fix any $T\ge T_\epsilon$.

There are two cases.  The first case is that $\|w\|_{L^\infty((0,T)\times\R)} \leq \|u_0\|_\infty$.  Due to \Cref{prop:well_posed}, $u \in C^{1;2}_{t;x}((0,T]\times\R)$, which implies that $w \in C^{1;2}_{t;x}((0,T]\times \R)$. Then we have that $u(t,x) = e^{-\epsilon t} w(t,x) \leq e^{-\epsilon t} \|u_0\|_\infty \leq e^{-\e t}$ for all $(t,x) \in (0,T]\times \R$.  In particular, this is true with $ t = T$, which implies that
$$u(T, x) \leq\frac{1}{1+2K_\infty}\ \hbox{ for all }x\in \R.$$

The second case is that $\|w\|_{L^\infty((0,T)\times\R)} > \|u_0\|_\infty$. Thanks to the upper bounds~\eqref{gaussian} in Proposition~\ref{prop:well_posed} and the bounds~\eqref{boundu}, which still hold locally in time since the functions $K*u$ and $(K*u)_x$ are bounded in $L^{\infty}((0,T)\times\R)$, there exists $(t_0,x_0) \in (0,T]\times \R$ such that $(t_0,x_0)$ is the location of a maximum of $w$ in $(0,T]\times\R$.  For notational ease, let $M = w(t_0,x_0) = \|w\|_{L^\infty((0,T)\times\R)}>\|u_0\|_\infty>0$.  Then, at $(t_0,x_0)$, we claim that
\begin{equation}\label{eq:M_bound}
\begin{split}
	0 \leq w_t - w_{xx} + (K*u) w_x
		& = M(1+ \epsilon - e^{-\epsilon t_0}M) - (K*u)_x M\\
		&\leq M(1 + \epsilon - e^{-\epsilon t_0}(1+2 K_\infty) M).
\end{split}
\end{equation}
Indeed, the first inequality comes from the fact that $(t_0,x_0)$ is the location of a maximum, while the second inequality is proved as follows.  Since $J$ is non-positive and $K$ is non-increasing on $(-\infty,0)$ and $(0,+\infty)$ we have that $|J| \geq 2K_\infty$, and, hence,
\[\begin{split}
	-(K*u)_x(t_0,x_0)
		&= J\,u(t_0,x_0) - \int_{-\infty}^0 [u(t_0,x_0+y) + u(t_0,x_0-y)] dK(y)\\
		&\leq -|J|\,u(t_0,x_0) + (|J|-2K_\infty) \| u(t_0,\cdot)\|_\infty\\
		&= -|J|\,w(t_0,x_0) e^{-\epsilon t_0} + (|J|-2K_\infty) \| w(t_0,\cdot)\|_\infty e^{-\epsilon t_0}
		= -2K_\infty M e^{-\epsilon t_0}.
\end{split}\]
It follows from~\eqref{eq:M_bound} that $M \leq (1+\e) e^{\e t_0}/(1+2K_\infty)$.  Hence, for all $x\in\R$,
\[
	u(T, x) = e^{-\epsilon T} w(T,x)
		\leq  e^{-\epsilon T} M
		\leq e^{-\epsilon T} \frac{1+\epsilon}{1 + 2K_\infty} e^{\epsilon t_0}
		\leq \frac{1+\epsilon}{1 + 2K_\infty}.
\]

Combining the results of our two cases, we see that
\begin{equation}\label{uT}
\|u(T,\cdot)\|_\infty \leq\frac{1+\e}{1+2K_\infty}\ \hbox{ for all }T\ge T_\e=\frac{\log(1 + 2K_\infty)}{\e}>0.
\end{equation}

The proof of the lower bound of $P$ is now relatively straightforward.  Integrate~\eqref{eq:the_equation} to obtain, for $t \geq T_\e$,
\[
	P'(t) = \int_\R u(t,x)(1-u(t,x)) dx \geq \int_\R u(t,x)\left(\frac{2K_\infty - \e}{1 + 2K_\infty}\right) dx = \left(\frac{2K_\infty - \e}{1 + 2K_\infty}\right)P(t).
\]
For simplicity, let
\begin{equation}\label{defr}
r:=\frac{2K_\infty-\e}{1+2K_\infty}\,\in\Big(0,\frac{2K_\infty}{1+2K_\infty}\Big).
\end{equation}
Integrating the equation above from $T_\e$ to $t$ yields the desired result:
\[
	P(t) \geq e^{r(t-T_\e)} P(T_\e)\ \hbox{ for all }t\ge T_\e.
\]
To control $P(T_\e)$, we again integrate~\eqref{eq:the_equation} to obtain
\begin{equation}\label{eq:P_increasing}
	P'(t) = \int_\R u(t,x)(1-u(t,x))\,dx
		\geq 0
\end{equation}
for all $t>0$. Here we used that $0<u\le1$ in $(0,+\infty)\times\R$ from \Cref{prop:well_posed}.  Hence, we have in particular that $P(t)\ge P(0)$ for all $t\in[0,T_\e]$. Combining this with our bound on $P(t)$ by $P(T_\e)$ yields, for all $t \in [0,+\infty)$,
\[
	P(t) \geq e^{r(t - T_\e)} P(0).
\]
This concludes the proof.
\end{proof}

\begin{rem} Since $\epsilon$ can be arbitrary in $(0,2K_\infty)$ in the above proof, it follows from~\eqref{uT} that
\begin{equation}\label{*20}
\limsup_{t\to+\infty}\|u(t,\cdot)\|_\infty\le\frac{1}{1+2K_\infty}
\end{equation}
and the real number $r$ in~\eqref{defr} can be any real number in the interval $(0,2K_\infty/(1+2K_\infty))$ and can therefore be all the closer to $1$ as $K_\infty$ is large.
\end{rem}

To illustrate that the result of Theorem~\ref{thm:acceleration}-(i) is not capturing a situation where the maximum of $u$ is small while its support is wide, we consider a specific case in the following corollary, which amounts to the first claim in \Cref{cor:acceleration_pointwise}.

\begin{corollary}\label{cor:acceleration_pointwise1}
Under the assumptions of Theorem~$\ref{thm:acceleration}$-$(i)$, suppose further that $K\in C^1(\R^*)$ is convex on $(0,+\infty)$ and that $u_0$ is even and radially non-increasing and of class $C^{2+\alpha}(\R)$ for some $\alpha\in(0,1)$.  Then there exists $\lambda > 0$ such that
$$\lim_{t\to+\infty} \inf_{|x| < e^{\lambda t}} u(t,x) \geq \frac{1}{2\,(1+|J|)}.$$
\end{corollary}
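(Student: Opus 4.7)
The plan has three stages: establishing spatial symmetry and radial monotonicity of $u$, reducing the target to the super-level sets of $u$, and constructing an expanding sub-solution at the explicit threshold $\mu=1/(2(1+|J|))$.

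First, I would show that $u(t,\cdot)$ is even in $x$ and radially non-increasing for all $t>0$. Evenness follows from the uniqueness of the bounded classical solution given by Proposition~\ref{prop:well_posed} together with the observation that the reflection $x\mapsto-x$ sends a solution to a solution when $K$ is odd and $u_0$ is even. For radial monotonicity, set $w:=u_x$; differentiating the PDE in $x$ yields
\begin{equation*}
w_t+(K*u)\,w_x=w_{xx}+w\bigl(1-2u-2(K*u)_x\bigr)-(K*u)_{xx}\,u.
\end{equation*}
The source term $-(K*u)_{xx}\,u$ is non-positive on $\{x>0\}$: writing $K'=K'_{\mathrm{sm}}+|J|\delta_0$, the smooth part $K''_{\mathrm{sm}}*u(t,x)=\int_0^\infty K''_{\mathrm{sm}}(z)\,[u(t,x-z)-u(t,x+z)]\,dz$ is non-negative for $x>0$ by the convexity of $K$ on $(0,+\infty)$ together with the evenness and radial monotonicity of $u$, while the jump contribution $|J|\,u_x$ is non-positive. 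With boundary data $w(t,0)=0$ (by evenness), $w\to 0$ at infinity (by decay), and $w(0,\cdot)\le 0$ on $(0,+\infty)$ (from the hypothesis on $u_0$), the parabolic maximum principle on $\{x>0\}$ yields $w\le 0$.

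Second, I would extract the structural consequences: $\|u(t,\cdot)\|_\infty=u(t,0)$ and $\{u(t,\cdot)\ge\mu\}=[-L_\mu(t),L_\mu(t)]$ for every $\mu>0$. Theorem~\ref{thm:acceleration}-(i) provides $P(t)\ge C^{-1}e^{rt}$ for some $r>0$, and the remark following it gives $\limsup_{t\to+\infty}\|u(t,\cdot)\|_\infty\le 1/(1+2K_\infty)$. The desired conclusion reduces to proving that $L_\mu(t)\ge e^{\lambda t}$ for some $\lambda>0$ with $\mu=1/(2(1+|J|))$.

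Third, I would produce an expanding sub-solution. On $\{x>0\}$, the advection term $(K*u)\,u_x$ is non-positive (since $(K*u)>0$ and $u_x\le 0$ by Step~1); together with the bound $|(K*u)_x|\le |J|\,\|u\|_\infty$ from~\eqref{eq:convolution_bounds}, the PDE reduces to the differential inequality
\begin{equation*}
u_t-u_{xx}\ge u\bigl(1-u-|J|\,\|u(t,\cdot)\|_\infty\bigr)\quad\text{on }(0,+\infty)\times(0,+\infty).
\end{equation*}
I would then take an ansatz $\underline u(t,x)=\mu\,\phi(|x|/L(t))$ with a smooth even cutoff $\phi\le 1$, $\phi(0)=1$, $\phi\equiv 1$ on $[-1/2,1/2]$, $\mathrm{supp}\,\phi\subset[-1,1]$, and $L(t)=2e^{\lambda t}$, and then verify the sub-solution inequality so that the comparison principle yields $u\ge\underline u\ge\mu$ on $|x|\le e^{\lambda t}$. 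The threshold $\mu=1/(2(1+|J|))$ is expected to arise from the balance between the reaction and the advection-derivative contributions on the plateau of $\underline u$, and $\lambda>0$ must be chosen small enough that the moving-support time derivative and the diffusion term are controlled. The main obstacle will be making this sub-solution argument rigorous at the precise constant $1/(2(1+|J|))$: the naive pointwise inequality $\mu+|J|\,\|u\|_\infty\le 1$ can fail when $|J|$ is large compared to $K_\infty$, so a more refined argument---for instance working in the Lagrangian frame of the advection $K*u$, in which the drift is eliminated and a Harnack-type estimate can be applied---will likely be needed to convert the heuristic algebraic balance into a genuine expanding sub-solution.
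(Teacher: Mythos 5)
Your Step 1 follows the paper's route (uniqueness plus reflection for evenness, maximum principle for $u_x$ on the half-line using convexity of $K$), but note that as written it is circular: you justify the sign of the source term $-(K*u)_{xx}\,u$ by invoking the radial monotonicity of $u$, which is exactly what you are proving. The paper avoids this by evaluating at a first interior maximum of $e^{-6|J|t}u_x$ and using only the bound of $u_x$ by its supremum together with the convexity of $K$ (which gives $-(K*w)_x(t_0,x_0)\le -2K(x_0)\,\eta\le 0$); a similar first-touching or continuation argument is needed to make your step rigorous. This is fixable, but the later steps contain the real gaps.

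The decisive missing idea is the \emph{pointwise} estimate
\begin{equation*}
(K*u)_x(t,x)=-J\,u(t,x)+\int_{-\infty}^0\bigl[u(t,x+y)+u(t,x-y)\bigr]\,dK(y)\le |J|\,u(t,x),
\end{equation*}
which holds because $dK\le 0$ on $(-\infty,0)$ and $u\ge 0$. Combined with $-(K*u)u_x\ge 0$ it yields $u_t-u_{xx}\ge u\bigl(1-(1+|J|)u\bigr)$ on all of $(0,+\infty)\times\R$, a genuine KPP-type inequality whose stable level is $1/(1+|J|)$ and which explains the constant $1/(2(1+|J|))$. Your substitute bound $|(K*u)_x|\le |J|\,\|u(t,\cdot)\|_\infty$ gives a reaction $u(1-u-|J|\,\|u(t,\cdot)\|_\infty)$ that is negative everywhere once $|J|$ is large compared with $K_\infty$ (and $|J|$ can be arbitrarily large while $K_\infty$ stays small), so the sub-solution argument at the stated threshold collapses; you flag this obstacle but the ``Lagrangian frame'' fix is not carried out, and the convexity hypothesis on $K$ is not needed for the pointwise bound anyway. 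A second, independent gap is the sub-solution itself: a plateau $\underline u(t,x)=\mu\,\phi(|x|/L(t))$ with $L(t)=2e^{\lambda t}$ cannot be a sub-solution of the \emph{local} inequality $u_t-u_{xx}\ge u(1-(1+|J|)u)$, whose spreading speed is bounded (at most $2$), so no comparison argument of this form can produce exponentially expanding level sets. In the paper the exponential rate does not come from the local dynamics at all: the mass growth $P(t)\ge C^{-1}e^{rt}$ from Theorem~\ref{thm:acceleration}-(i), combined with the Gaussian tail upper bound from Proposition~\ref{prop:fokker_planck} applied to a super-solution, locates a point $x_t$ with $e^{2rt/(M(4+r))}-1\le x_t\le e^{2t/M}$ at which $u(t/M,\cdot)\ge e^{-3t/M}$, monotonicity transfers this small lower bound to all of $[-x_t,x_t]$, and then a compactly supported cosine bump grown over the long time window $[t/M,t]$ lifts $u$ up to $1/(2(1+|J|))$ at each such point. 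Your proposal omits both the use of the tail estimate to pin down $x_t$ and the ``small initial datum grown over a long time'' mechanism, so the exponential radius $e^{\lambda t}$ is never actually produced.
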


\begin{proof}
Due to the assumptions, we first claim that $u(t,\cdot)$ is even and radially non-increasing for all time $t\ge0$. This follows from the well-posedness of~\eqref{eq:the_equation} shown in \Cref{prop:well_posed} along with comparison principle arguments. Indeed, to be more precise, observe first that, since $K$ is odd, the function $\tilde{u}(t,x):=u(t,-x)$ satisfies the same equation~\eqref{eq:the_equation} as $u$ with the same initial condition $u_0$. Therefore, Proposition~\ref{prop:well_posed} implies that $\tilde{u}$ is equal to $u$, meaning that $u(t,\cdot)$ is even for all $t\ge0$.

Let us now show that $u(t,\cdot)$ is radially non-increasing for every $t\ge 0$. First of all, since $u_0$ is of class $C^{2+\alpha}(\R)$ with $\alpha\in(0,1)$, similar arguments as in the proof of Proposition~\ref{prop:well_posed} lead to the local and then global existence and uniqueness of a solution of~\eqref{eq:the_equation}-\eqref{hypu0} of class $C^{1+\alpha/2;2+\alpha}_{t;x}([0,+\infty)\times\R)$ and satisfying the same estimates as in Proposition~\ref{prop:well_posed}. In other words, the solution $u$ constructed in Proposition~\ref{prop:well_posed} is then of class $C^{1+\alpha/2;2+\alpha}_{t;x}([0,+\infty)\times\R)$. Furthermore, parabolic regularity also implies that the function $v:=u_x$ is a bounded classical $C^{1;2}_{t;x}((0,+\infty)\times\R)\cap C([0,+\infty)\times\R)$ solution of
\begin{equation}\label{eqv}
v_t+(K*u)\,v_x+2\,(K*u)_x\,v+(K*v)_x\,u=v_{xx}+v\,(1-2u)\quad \hbox{ in }(0,+\infty)\times\R
\end{equation}
and that $v(t,x)\to0$ as $|x|\to+\infty$ locally uniformly in $t\in[0,+\infty)$ since $u$ satisfies the same property and the coefficient $K*u$ is bounded locally with respect to $t\in[0,+\infty)$ and the other coefficients $(K*u)_x$ and $(K*v)_x$ are globally bounded. Our goal is to show that
\begin{equation}\label{ux0}
u_x=v\le0\ \hbox{ in }[0,+\infty)\times[0,+\infty),
\end{equation}
meaning that $u(t,\cdot)$ is non-increasing in $[0,+\infty)$ for all $t\ge0$. If $J=0$ then $K\equiv 0$ in $\R$ from our assumptions on $K$; in this case, the boundedness of $u$ and $v$ and the non-positivity of $v(0,\cdot)$ on $[0,+\infty)$ (from our assumption on $u_0$) and of $v(\cdot,0)$ on $[0,+\infty)$ ($v(t,0)=0$ for all $t\ge0$ since $u(t,\cdot)$ is even) directly yield~\eqref{ux0} from the local maximum principle. Assume then in the rest of this paragraph that $J\neq0$, that is, $J<0$ with our hypotheses on $K$. Assume by way of contradiction that there exists $T>0$ such that $\sup_{[0,T]\times[0,+\infty)}v>0$. Denote
$$w(t,x)=e^{-6|J| t}v(t,x).$$
Then $\sup_{[0,T]\times[0,+\infty)}w>0$. From the previous observations and the facts that $w(t,0)=0$ for all $t\ge0$ (by evenness of $u(t,\cdot)$) and $w(0,x)\le0$ for all $x\ge0$ by assumption, one infers the existence of $(t_0,x_0)\in(0,T]\times(0,+\infty)$ such that
$$\eta:=w(t_0,x_0)=\sup_{[0,T]\times[0,+\infty)}w>0.$$
From~\eqref{eqv}, there holds, at $(t_0,x_0)$,
$$\begin{array}{rcl}
0\le w_t+(K*u)\,w_x-w_{xx} & = & -6\,|J|\,w-2\,(K*u)_x\,w-(K*w)_x\,u+w\,(1-2u)\\
& = & -6\,|J|\,\eta-2\,(K*u)_x\,\eta+(1-2u)\,\eta-(K*w)_x\,u.\end{array}$$
Together with~\eqref{ineqK*u}, the bound $0<u\le1$ in $(0,+\infty)\times\R$ and the positivity of $\eta$, one infers that
\begin{equation}\label{ineqt0x0}
0\le-|J|\,\eta-(K*w)_x(t_0,x_0)\,u(t_0,x_0).
\end{equation}
Since $w(t_0,\cdot)$ is odd (because $u(t_0,\cdot)$ is even), a straightforward calculation yields
$$\begin{array}{rcl}
-(K*w)_x(t_0,x_0)=-|J|\,w(t_0,x_0) & + & \ds\int_0^{x_0}\big(K'(x_0+y)-K'(x_0-y)\big)\,w(t_0,y)\,dy\vspace{3pt}\\
& + & \ds\int_{x_0}^{+\infty}\big(K'(x_0+y)-K'(x_0-y)\big)\,w(t_0,y)\,dy.\end{array}$$
Since $K\in L^{\infty}(\R)\cap C^1(\R^*)$ is assumed to be odd in $\R$ and convex on $(0,+\infty)$, it follows that $\kappa(y):=K'(x_0+y)-K'(x_0-y)\ge0$ for all $y\in(0,+\infty)\backslash\{x_0\}$. Using $w(t_0,\cdot)\le\eta$ in $[0,+\infty)$, we obtain
$$-(K*w)_x(t_0,x_0)\le-|J|\,\eta+\eta\,\Big(\int_0^{x_0}\kappa(y)\,dy+\int_{x_0}^{+\infty}\kappa(y)\,dy\Big)=-2\,K(x_0)\,\eta\le0.$$
With~\eqref{ineqt0x0} and the positivity of $u(t_0,x_0)$, one gets that $0\le-|J|\,\eta$, which is a contradiction since $J<0$ and $\eta>0$. Therefore,~\eqref{ux0} holds and the even function $u(t,\cdot)$ is non-increasing in $[0,+\infty)$ for every $t\ge0$.

With this in hand, we now claim that $-(K*u)(t,x) u_x(t,x) \geq 0$ for all $t>0$ and $x\in\R$.  Indeed, if $x\ge0$, then $u_x(t,x) \leq 0$ by our observation that $u(t,\cdot)$ is radially non-increasing, while
\begin{equation}\label{eq:Ku}
	(K*u)(t,x)
		= \int_\R K(y)u(t,x-y)dy
		= \int_0^{+\infty} [ u(t,x-y) - u(t,x+y)] K(y)dy \geq 0.
\end{equation}
The second equality follows from the fact that $K$ is odd.  The inequality is due to the fact that, for every $y>0$, $K(y) \geq 0$, $|x-y| \leq x+y$, and $u(t,\cdot)$ is even and radially non-increasing.  This establishes the inequality $-(K*u)(t,x) u_x(t,x) \geq 0$ for all $t>0$ and $x\ge0$. Similarly, if $t>0$ and $x\le0$, then $u_x(t,x)\ge0$ and $(K*u)(t,x)\le0$ since this time $|x+y|\le|x|+y=|x-y|$ for all $y>0$.

In addition, a straightforward computation implies that, for all $(t,x) \in (0,+\infty) \times \R$,
\begin{equation}\label{eq:Kux}
	(K*u)_x(t,x)
		= -J\,u(t,x) + \int_{-\infty}^0 [u(t,x+y) + u(t,x-y)] dK(y)
		\leq |J|\,u(t,x).
\end{equation}
The combination of~\eqref{eq:Ku} and~\eqref{eq:Kux} yield
\begin{equation}\label{eq:exponential_sub_solution}
	u_t - u_{xx} \geq u(1 - (1 + |J|) u)
		\qquad \text{ in }(0,+\infty)\times \R.
\end{equation}
We use this inequality to create a sub-solution, and, hence, lower bound of $u$ in the sequel.

In order to create a sub-solution, we first obtain a preliminary lower bound of $u$.  Fix $r$ as in \Cref{thm:acceleration} with the choice $\epsilon=K_\infty$, that is, $r=K_\infty/(1+2K_\infty)>0$, and let $M =13$ and $t_0>M$ to be determined below, depending only on $P(0)$ and $K$. In addition, fix $t \geq t_0$.  We claim that there exists $x_t\in\R$ such that
\begin{equation}\label{defxt}
u(t/M,x_t) \geq\frac{e^{rt/(2M)}}{(1 + |x_t|)^{1 + r/4}}.
\end{equation}
If not, then using \Cref{thm:acceleration}-(i), there exists $C_0>0$ depending only on $P(0)$ and $K$ such that
\[
	C_0\,e^{rt/M}
		\leq P(t/M)
		= \int_\R u(t/M,x)dx
		< \int_\R \frac{e^{rt/(2M)}}{(1 + |x|)^{1 + r/4}} dx
		= \frac{8\,e^{rt/(2M)}}{r},
\]
which is clearly a contradiction when $t_0 > (2M/r) \log(8/(rC_0))$.  Hence, enlarging $t_0>M$ if necessary (depending on $M$, $r$ and $C_0$, and therefore on $K$ and $P(0)$), we have that, for all $t \geq t_0$, there exists $x_t\in\R$ satisfying~\eqref{defxt}. Since $u(t,\cdot)$ is even, one can assume without loss of generality that $x_t\ge0$. Recalling that $1 \geq u(t/M,x_t)$ yields $1 \geq e^{rt/(2M)}/(1+ x_t)^{1+r/4}$. This, in turn, implies
\begin{equation}\label{eq:x_t_bound1}
	x_t \geq e^{2rt/(M(4+r))} - 1\ \hbox{ for every }t\ge t_0.
\end{equation}

We now obtain a partially matching upper bound on $x_t$, for every fixed $t\ge t_0$, via an upper bound of $u$.  Indeed, let $a>0$ be such that $\supp u_0 \subset [-a,a]$, and notice that, for all $s \in [0,t/M]$,
\[
	\|K*u(s,\cdot)\|_\infty \leq \|K\|_\infty \|u(s,\cdot)\|_1 = \frac{|J|}{2} P(s) \leq \frac{|J|\,P(0)\,e^{t/M}}{2},
\]
and
\begin{equation}\label{K*ux}
\|(K*u(s,\cdot))_x\|_\infty \leq |J|
\end{equation}
from the bounds in Proposition~\ref{prop:well_posed} and from the second inequality in~\eqref{eq:convolution_bounds}, which still holds here.  Define $\overline u(s,x)$ to be the solution of
\[
	\overline u_s + [(K*u)\overline u]_x = \overline u_{xx} + \overline u,
\]
for all $(s,x) \in (0,t/M]\times \R$, with initial data $\overline u(0,\cdot) = u_0$.  Using the non-negativity of $u$, the comparison principle implies that $u(s,x) \leq \overline u(s,x)$ for all $(s,x) \in [0,t/M]\times\R$. Applying \Cref{prop:fokker_planck} to $e^{-s}\overline u(s,x)$ with $T = t/M\ge t_0/M>1$ and $A = |J|\,P(0)\,e^{t/M}/2$, there is a constant $C_1$ depending only on $J$, $a$, and $P(0)$, such that, for $|x| \ge t\,|J|\,P(0)\,e^{t/M}/(2M) + a + 1$,
\[
	e^{-t/M} u(t/M,x)
		\leq e^{-t/M} \overline u(t/M,x)
		\leq C_1e^{t/M} \sqrt{t/M}\,e^{- \frac{(|x| - t\,|J|\,P(0)\,e^{t/M}/(2M) - a)^2}{4t/M}}.
\]

If we had $x_t\ge e^{2t/M}$ for some $t\ge t_0$ large enough so that $e^{2t/M}\ge t\,|J|\,P(0)\,e^{t/M}/(2M) + a + 1$, then using the above inequality at the non-negative point $x_t$ would yield
\[
	\frac{e^{rt/2M}}{(1 + x_t)^{1 + r/4}}
		\leq u(t/M,x_t)
		\leq C_1e^{2t/M} \sqrt{t/M}\,e^{- \frac{(x_t -t\,|J|\,P(0)\,e^{t/M}/(2M) - a)^2}{4t/M}}.
\]

This leads to a contradiction if $t$ is large enough, depending on $r$, $M$, $C_1$, $P(0)$, and $a$, hence on $K$, $P(0)$, and $a$. Therefore, increasing $t_0$ if necessary, depending only on $K$, $P(0)$, and $a$, this implies that
\begin{equation}\label{eq:x_t_bound2}
	x_t \leq e^{2t/M}
\end{equation}
for every $t\ge t_0$. Since $u$ is even and non-increasing then $u(t,x) \geq u(t,x_t)$ for all $|x| \leq x_t$.  Hence, increasing again $t_0$ if necessary,
\begin{equation}\label{eq:prelim_lower_bound}
	u(t/M,x) \geq \frac{e^{rt/(2M)}}{(1 + x_t)^{1+r/4}}
		\geq \frac{e^{-2t/M}}{2^{1+r/4}}\ge e^{-3t/M}\ \hbox{ for all }t\ge t_0\hbox{ and }x\in[-x_t,x_t].
\end{equation}

For every fixed $t\ge t_0$, we now construct a sub-solution of $u$ on $[t/M, t]\times [\overline x-\pi,\overline x+\pi]$ for any $\overline x \in [-x_t + \pi, x_t - \pi]$.  Define
\[
		\mu = \frac{3 - t^{-1}M\log(2)}{M-1}.
\]
Increasing $t_0$ such that $t_0 \geq M\log(2)/3$ if necessary,  we have that $0\le\mu \leq 1/4$ because $M=13$.  Let
\begin{equation}\label{eq:underline_u}
	\underline u(s,x)
		= \frac{e^{\mu( s - t/M)-3t/M}}{1+|J|}\, \cos\left( \frac{x - \overline x}{2}\right)\!,\ \hbox{ for }(s,x)\in[t/M,t]\times[\overline{x}-\pi,\overline{x}+\pi].
\end{equation}
Then on $[t/M, t]\times [\overline x-\pi,\overline x+\pi]$, using the definition of $\mu$, we see that
\[
	0\le(1+|J|)\,\underline u(s,x)
		\leq e^{(\mu (M-1) - 3)t/M}
		= \frac{1}{2}.
\]
Hence $\underline u$ satisfies
\[
	\underline u_s - \underline u_{xx}
		= \mu \underline u + \frac{1}{4}\,\underline u
		\leq \frac{1}{2}\, \underline u
		\leq \underline{u}\,\left(1 - (1+|J|)\underline u \right)\ \hbox{ in }[t/M,t]\times[\overline{x}-\pi,\overline{x}+\pi].
\]
It follows that $\underline u$ is a sub-solution to~\eqref{eq:exponential_sub_solution} in $[t/M,t]\times[\overline{x}-\pi,\overline{x}+\pi]$. On the other hand, by the definition of $\underline u$~\eqref{eq:underline_u} along with~\eqref{eq:prelim_lower_bound}, one has $\underline u(t/M,\cdot)\leq u(t/M,\cdot)$ in $[\overline{x}-\pi,\overline{x}+\pi]$, while $\underline{u}(s,\overline{x}\pm\pi)=0\le u(s,\overline{x}\pm\pi)$ for all $s\in[t/M,t]$. Thus, one infers from the maximum principle that $\underline{u}\le u$ in $[t/M,t]\times[\overline{x}-\pi,\overline{x}+\pi]$, hence
\[
	\frac{1}{2(1+|J|)}=
		\underline u(t, \overline x)
		\leq u(t,\overline x).
\]
Thus, we have that, when $t$ is sufficiently large,
\[
	\inf_{|x| \leq x_t - \pi} u(t,x) \geq \frac{1}{2(1 +|J|)}.
\]
Let $\lambda = 2r/(M(5 + r))>0$.  From~\eqref{eq:x_t_bound1}, it follows that
\[
	\liminf_{t\to+\infty} \inf_{|x| < e^{\lambda t}} u(t,x) \geq \frac{1}{2(1 +|J|)}.
\]
This concludes the proof.
\end{proof}

\begin{rem} In the proof above, for any $\epsilon>0$, redefining $\mu_\e=(3-t^{-1}M_\e\log(1+\epsilon))/(M_\e-1)$, enlarging $M_\e$ and redefining $\underline u(s,x)= (1+|J|)^{-1}\,e^{\mu_\e( s - t/M_\e)-3t/M_\e}\,\cos((x - \overline x)/(2\gamma_\e))$ in $[t/M_\e,t]\times[\overline{x}-\gamma_\e\pi,\overline{x}+\gamma_\e\pi]$ with $\overline x \in [-x_t + \gamma_\e\pi, x_t - \gamma_\e\pi]$ and $\gamma_\e>0$ large, it is straightforward to see that there exists $\lambda_\e>0$ such that
$$\liminf_{t\to+\infty} \inf_{|x| < e^{\lambda_\e t}} u(t,x) \geq \frac{1}{(1+\epsilon)(1 +|J|)}.$$
In particular, for every family $(y_t)_{t>0}$ of positive real numbers such that $y_t=o(e^{\alpha t})$ as $t\to+\infty$ for every $\alpha>0$, then
$$\liminf_{t\to+\infty} \inf_{|x| < y_t} u(t,x) \geq \frac{1}{1 +|J|}.$$
Notice that this lower bound is coherent with the upper bound~\eqref{*20}, since $|J|\ge 2K_\infty$. Furthermore, if $K$ is constant on $(-\infty,0)$ and on $(0,+\infty)$, then $|J|=2K_\infty$ and $\sup_{|x|<y_t}|u(t,x)-(1+|J|)^{-1}|\to0$ as $t\to+\infty$ for every family $(y_t)_{t>0}$ of positive real numbers such that $y_t=o(e^{\alpha t})$ as $t\to+\infty$ for every $\alpha>0$.
\end{rem}


\subsection{The case when $K \in L^p(\R)$: proof of \Cref{thm:acceleration}-(ii)}

\begin{proof}[Proof of Theorem~$\ref{thm:acceleration}$-{\rm{(ii)}}]
First of all, when $K=0$ a.e. in $\R$, then the desired conclusion with any $p\in[1,\infty)$ follows easily from e.g. \cite{Uchiyama}. In the sequel, we then assume that $K$ is not trivial.  We begin by obtaining a pointwise upper bound on $u$.  First, for every $t\ge0$ and $x\in\R$,
\[\begin{split}
	|(K*u)(t,x)|
		= \left| \int_\R K(x-y)u(t,y)dy\right|
		\leq \|K\|_p \left( \int_\R u(t,y)^\frac{p}{p-1} dy\right)^{1 - \frac{1}{p}}
		\leq \|K\|_p\,P(t)^{1-\frac{1}{p}},
\end{split}\]
since $0 \leq u \leq 1$ by Proposition~\ref{prop:well_posed}. Together with~\eqref{eq:P_V} and~\eqref{eq:P_increasing}, this also implies that, for every $t>0$ and $s\in(0,t+1]$, $\|K*u(s,\cdot)\|_\infty\le\|K\|_p\,e^{1-1/p}\,P(t)^{1-1/p}\le\|K\|_p\,e^{1-1/p}\,P(t)^{1-1/p}$. Let $\overline u$ be the solution to
\[
	\overline u_t + ((K*u) \overline u)_x = \overline u_{xx} + \overline u\quad\hbox{ in }(0,+\infty)\times\R,
\]
with initial data $\overline u(0,\cdot) = u_0$.  As above, $\overline u$ is a super-solution of~\eqref{eq:the_equation} and $u(t,x) \leq \overline u(t,x)$ for all $(t,x) \in [0,+\infty)\times \R$.

Let $a>0$ be such that $\supp u_0 \subset [-a,a]$. Fixing $t>1$, using~\eqref{eq:P_increasing} and applying \Cref{prop:fokker_planck} to $e^{-s} \overline u(s,x)$ with $T=t$ and $A = \|K\|_p\,P(t)^{1-1/p}$, there is a constant $C_K$ depending only on $K$, $P(0)$, and $p$ such that, for all $|x| \ge t\,\|K\|_p\,P(t)^{1-1/p} + a + 1$,
\begin{equation}\label{eq:fast_super_solution}
	e^{-t} u(t,x)
		\leq e^{-t} \overline u(t,x)
		\leq C_K\,P(t)^{1-1/p}\,\sqrt{t}\,e^{- \frac{(|x|-t\|K\|_pP(t)^{1-1/p}-a)^2}{4t}}.
\end{equation}
Let
$$I_t = \big\{x\in\R : |x| < 2\,t\,\|K\|_p\,P(t)^{1-1/p} +2a+2\big\}.$$
Since $0 \leq u \leq 1$, it follows immediately that, for every $t>1$,
\[\begin{split}
	P(t) &\leq \int_{I_t} dx + \int_{I_t^c} C_K\,P(t)^{1-1/p}\,\sqrt{t}\,e^{t- \frac{(|x| - t\|K\|_p P(t)^{1-1/p} - a)^2}{4t}} dx\\
		&\leq |I_t|+ C_K\,P(t)^{1-1/p}\,\sqrt{t}\,e^t\,\frac{4t\,e^{-\frac{(t\,\|K\|_p\,P(t)^{1-1/p} +a+2)^2}{4t}}}{t\,\|K\|_p\,P(t)^{1-1/p} +a+2}\\
					&\leq 2(2\,t\,\|K\|_p\,P(t)^{1-1/p} +2a+2)+ 2\,C_K\,P(t)^{1-1/p}\,t^{3/2}\,e^{t(1- \|K\|_p^2 P(t)^{2(1-1/p)}/4)},
\end{split}\]
where we used the inequality $\int_b^{+\infty}e^{-y^2}dy\le e^{-b^2}/(2b)$ for all $b>0$.

Assume first that $p>1$ and denote $q=p/(p-1)$ the conjugate exponent of $p$. Applying Young's inequality, we obtain, for every $t>1$,
\[
	P(t)	 \leq 4(a+1)\!+\!\left(\frac{3^{p-1}\,4^p\,\|K\|_p^p\,t^p}{p\,q^{p - 1}}\!+\!\frac{P(t)}{3}\right)\!
		 +\! \left( \frac{3^{p-1}\,2^p\,C_K^p\,t^{3p/2}\,e^{pt(1- \|K\|_p^2 P(t)^{2(1-1/p)}/4)}}{p\,q^{p-1}}\!+\!\frac{P(t)}{3}\right).
\]
Re-arranging this inequality  and using~\eqref{ineqP} and $\|K\|_p>0$ clearly yields a constant $C>0$ depending on $a$, $p$, $K$, and $P(0)$ such that $P(t) \leq C(t^p +1)$ for all $t\ge 0$.

When $p=1$, then $\|K*u(s,\cdot)\|_\infty\le\|K\|_1$ for all $s\ge0$. Therefore, with $A=\|K\|_1$ and defining this time $I_t=\big\{x\in\R:|x|<M(t\|K\|_1+a+1)\}$ for some $M\ge 1$ to be chosen, the previous calculations imply similarly that $|x|-t\|K\|_1-a-1\ge(1-1/M)|x|$ for all $x\in I_t^c$ and
$$P(t)\le2M(t\|K\|_1+a+1)+C_KM(M-1)^{-2}\,t^{3/2}\,e^{t-(M-1)^2t\|K\|_1^2/4}$$
for every $t\ge1$, where $C_K$ depends only on $K$. Observe that the second term is less than $1$ for all $t\ge1$, provided $M\ge1$ is fixed large enough, depending only on $K$. Therefore, together with~\eqref{ineqP}, there is a constant $C>0$ depending on $a$, $K$, and $P(0)$, such that $P(t)\le C(t+1)$ for all $t\ge0$. The proof is thereby complete.
\end{proof}


\subsection{The case when $K(x) \geq A(1+x)^{-\alpha}$: proof of \Cref{thm:acceleration}-(iii)}\label{sec:K_alpha}

In order to prove the third case of \Cref{thm:acceleration}, we require the following lemma, which we state now and prove in the sequel.

\begin{lem}\label{lem:minimization}
	Suppose that $\phi$ is a non-negative, non-increasing and integrable function on $(0,+\infty)$.  Let $M>0$ and $X = \big\{w \in L^1(0,+\infty)\cap L^\infty(0,+\infty) : \|w\|_\infty\leq 2,\, \|w\|_1 \leq M\big\}$.  Then
	\[
		\max_{w\in X} \int_0^{+\infty} \phi(y) w(y) dy = 2\int_0^{M/2} \phi(y)dy.
	\]
\end{lem}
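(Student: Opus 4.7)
The statement is a classical \emph{bathtub principle} type result: since $\phi$ is non-increasing and non-negative, we should pile as much of the $L^1$ mass of $w$ as close to $0$ as possible, subject to the pointwise cap $w \le 2$. This suggests the optimizer is $w^* := 2\,\mathbf{1}_{[0,M/2]}$, which lies in $X$ (it has $\|w^*\|_\infty = 2$ and $\|w^*\|_1 = M$) and yields the value $2\int_0^{M/2}\phi(y)\,dy$.

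The plan is therefore to prove the matching upper bound for every $w\in X$. First, observe that we may assume $w\ge 0$: replacing $w$ with its positive part $w^+$ leaves $\|w\|_\infty$ unchanged or smaller, does not increase $\|w\|_1$, and, because $\phi\ge 0$, satisfies $\int\phi\,w^+\ge\int\phi\,w$. So it suffices to bound $\int_0^{+\infty}\phi\,w$ for non-negative $w$ with $\|w\|_\infty\le 2$ and $\|w\|_1\le M$.

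Then I would write
\[
\int_0^{+\infty}\phi(y)\big(w(y)-w^*(y)\big)\,dy = \int_0^{M/2}\phi(y)\big(w(y)-2\big)\,dy + \int_{M/2}^{+\infty}\phi(y)\,w(y)\,dy
\]
and exploit the monotonicity of $\phi$: on $[0,M/2]$ we have $w(y)-2\le 0$ and $\phi(y)\ge\phi(M/2)$, so $\phi(y)(w(y)-2)\le\phi(M/2)(w(y)-2)$; on $(M/2,+\infty)$ we have $w(y)\ge0$ and $\phi(y)\le\phi(M/2)$, so $\phi(y)w(y)\le\phi(M/2)w(y)$. Adding these two estimates collapses the right-hand side to
\[
\phi(M/2)\Bigg(\int_0^{+\infty}w(y)\,dy - M\Bigg) \le 0,
\]
since $\|w\|_1\le M$. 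This gives $\int\phi\,w\le\int\phi\,w^* = 2\int_0^{M/2}\phi$, and combined with the fact that $w^*\in X$ realizes this value, the maximum equals $2\int_0^{M/2}\phi(y)\,dy$.

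There is no real obstacle: the argument is a two-line rearrangement inequality. The only minor care needed is the reduction to non-negative $w$ (since $X$ allows sign changes) and the observation that $\phi$ need not be strictly positive or continuous, so one should phrase the monotonicity step as a pointwise inequality rather than invoking strict ordering. A one-line alternative would be to cite the bathtub principle directly (e.g.\ Lieb--Loss), but the self-contained proof above is short enough to include.
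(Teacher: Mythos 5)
Your proof is correct and uses essentially the same argument as the paper: the same extremizer $2\,\mathbf{1}_{[0,M/2]}$, the same reduction to non-negative $w$, and the same comparison with $\phi(M/2)$ exploiting the sign of $w-w^*$ on either side of $M/2$. The only cosmetic difference is that the paper runs the estimate on an $\epsilon$-near maximizer $w_\epsilon$ while you bound every $w\in X$ directly (which also justifies writing ``max''), so nothing further is needed.
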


\begin{proof}[Proof of Theorem~$\ref{thm:acceleration}$-{\rm{(iii)}}, from Lemma~$\ref{lem:minimization}$]
Let the bulk-burning rate be
\[
	V(t) = \int_\R u(t,x)(1-u(t,x)) dx,\quad\hbox{ for }t\ge0.
\]
This proves useful for estimating $P$. First of all, notice that the a priori estimates listed in the proof of Proposition~\ref{prop:well_posed} imply that the function $V$ is well-defined and continuous on $[0,+\infty)$ and of class $C^1$ on $(0,+\infty)$. Furthermore, we obtain a differential inequality with these quantities as follows: at $t > 0$,
\begin{equation}\label{eq:bulk_burning}
\begin{split}
	V'(t)
		&=\int_\R u_t(1-2u) dx
		=\int_\R \left[ u_{xx} + u(1-u) - ((K*u) u)_x \right] (1 - 2u) dx\\
		&= 2 \int_\R u_x^2 dx + V(t) - 2\int_\R u^2(1-u)dx - 2 \int_\R (K*u) u u_x dx\\
		&\geq V(t) - 2\int_\R u^2(1-u)dx  + \int_\R (K*u)_x u^2 dx,
\end{split}
\end{equation}
where all quantities in the integrals are evaluated at $(t,x)$. A straightforward computation yields
\[
	(K*u(t,\cdot))_x(x) = |J|\,u(t,x) + \int_0^{+\infty} [u(t,x-y) + u(t,x+y)]\,K'(y)\,dy.
\]
For any fixed $t>0$, applying \Cref{lem:minimization} with $\phi(y) = - K'(y)$ (remember that $K$ is here assumed to be $C^1$, convex and bounded on $(0,+\infty)$), $M = P(t)$, and $w(y) = u(t,x-y) + u(t,x+y)$, we see that, for all $x\in\R$,
\[
	(K*u(t,\cdot))_x(x)
		\geq |J|\,u(t,x) + 2\int_0^{P(t)/2} K'(y)\,dy
		= |J|\,u(t,x) - |J| + 2K(P(t)/2).
\]
Using this inequality in~\eqref{eq:bulk_burning} along with the identity $u^2 = u - u(1-u)$, we see that, at every $t >0$,
\[\begin{split}
	V'(t) &\geq V(t) - 2 \int_\R u^2 (1-u)dx + \int_\R \left( |J| u^3 - |J|u^2 + K(P(t)/2) u^2\right) dx\\
		&= V(t) - (|J|+2) \int_\R u^2 (1-u)dx + K(P(t)/2) (P(t)-V(t)).
\end{split}\]
We now use the facts that $\int_\R u^2(1-u)dx \leq V(t)$ and that $K\in L^\infty(\R)$ to re-write this as
\begin{equation}\label{eq:P_differential_inequality1}
	V'(t) + C_K V(t) \geq K(P(t)/2) P(t),
\end{equation}
where $C_K$ depends only on $\|K\|_\infty$ and $|J|$.  On the other hand, $P(t) \geq P(0)>0$ for all $t\ge0$ (see~\eqref{eq:P_increasing}) and we assume here that $K(x)\ge A(1+x)^{-\alpha}$ for all $x>0$, with $\alpha\in(0,1)$ and $A>0$. So there exists a constant $C_0>0$ depending only on $\alpha$, $A$ and $P(0)$, and hence only on $u_0$ and $K$, such that
\[
	K(P(t)/2)\,P(t) \geq C_0\,P(t)^{1-\alpha}
\]
for all $t\ge0$. The combination of this with~\eqref{eq:P_differential_inequality1} yields the key inequality
\begin{equation}\label{eq:P_differential_inequality}
	V'(t) + C_K V(t) \geq C_0\,P(t)^{1-\alpha}\ \hbox{ for all }t>0.
\end{equation}

From~\eqref{eq:P_V}, notice that $P'(t) = V(t)$ for all $t>0$.  We claim that~\eqref{eq:P_differential_inequality} is enough to conclude.  To see this, define
$$\underline P(t) = \epsilon (t+1)^{1/\alpha}$$
for $\epsilon>0$ to be determined.  Then,
\[
	\underline P''(t) + C_K \underline P'(t)
		= \epsilon \frac{1-\alpha}{\alpha^2} (1+t)^{1/\alpha - 2} + \epsilon\frac{C_K}{\alpha} (1+t)^{1/\alpha - 1}
\]
for all $t>0$, while
\[
	C_0\,\underline P(t)^{1-\alpha} = C_0\,\epsilon^{1-\alpha}\,(1+t)^{1/\alpha -1}.
\]
Hence, taking $\epsilon>0$ sufficiently small (depending only on $\alpha$, $C_K$, and $C_0$, and hence depending only on $u_0$ and $K$), we have that, for all $t\ge0$,
\begin{equation}\label{eq:underline_P}
	\underline P''(t) + C_K \underline P'(t)
		< C_0\,\underline P(t)^{1-\alpha}.
\end{equation}
Remember now that $P(1)\ge P(0)>0$. Furthermore, $u(1,\cdot)$ is continuous, ranges in $(0,1]$ by Proposition~\ref{prop:well_posed} and $u(1,x)\to0$ as $|x|\to+\infty$ by~\eqref{gaussian}. Hence, $V(1)>0$. Therefore, decreasing $\epsilon>0$ if necessary (depending only on $u_0$ and $K$), we may assume that $\underline P(1) < P(1)$ and that $\underline P'(1) < V(1)$.

We now claim that
$$\underline P(t) \leq P(t)\ \hbox{ for all }t\geq 1.$$
We prove this by contradiction.  Hence, let
\[
	t_1 = \sup\big\{t\ge1: \underline P(s) \leq P(s)\hbox{ and }\underline P'(s) \leq V(s)\text{ for all } s \in [1,t]\big\}\,>0,
\]
and assume that $t_1 <+\infty$.  We first claim that $\underline P'(t_1) = V(t_1)$.  If $\underline P(t_1) < P(t_1)$ then this follows from the definition of $t_1$.  If $\underline P(t_1) = P(t_1)$, we argue as follows.  The fact that $\underline P(s) \leq P(s)$ for all $s\in[1,t_1]$ implies that $P - \underline P$ has a minimum of zero on $[1,t_1]$ that occurs at $t_1$.  It follows that $(P-\underline P)'(t_1) \leq 0$, which is equivalent to $\underline P'(t_1) \geq P'(t_1) = V(t_1)$.  Here we used that $P' = V$.  Continuity, along with the definition of $t_1$, then yields that $\underline P'(t_1) = V(t_1)$.  It follows that, in both cases $\underline{P}(t_1)<P(t_1)$ or $\underline{P}(t_1)=P(t_1)$, there holds
\begin{equation}\label{eq:P_P'}
	\underline P(t_1) \leq P(t_1)
		\qquad \text{ and } \qquad
	\underline P'(t_1) = V(t_1),
\end{equation}
which finishes the proof of the claim that $\underline P'(t_1) = V(t_1)$. We next claim that
\begin{equation}\label{eq:P''_V'}
	\underline P''(t_1) \geq V'(t_1).
\end{equation}
Indeed, since $\underline P'(s) \leq V(s)$ for all $s\in[0,t_1]$, then we have that $V - \underline P'$ has a minimum of zero on $[1,t_1]$ that occurs at $t_1$.  It follows that $(V - \underline P')'(t_1) \leq 0$, that is, $\underline P''(t_1) \geq V'(t_1)$, as desired. Combining the bounds on the relationship between $\underline P$ and $P$ at $t_1$,~\eqref{eq:P_P'} and~\eqref{eq:P''_V'}, with the differential inequality for $\underline P$~\eqref{eq:underline_P} and the differential inequality for $P$~\eqref{eq:P_differential_inequality}, we have that
\[
	C_0\,\underline P(t_1)^{1-\alpha}
		> \underline P''(t_1) + C_K \underline P'(t_1)
		\geq V'(t_1) + C_K V(t_1)
		\geq C_0\,P(t_1)^{1-\alpha}
		\geq C_0\,\underline P(t_1)^{1-\alpha},
\]
a contradiction.

We conclude that $\underline P(t) \leq P(t)$ for all $t \geq 1$.  This may be re-written as
\[
	P(t) \geq \epsilon\,(1+t)^{1/\alpha},
\]
for all $t\ge1$. Together with~\eqref{ineqP} and~\eqref{eq:P_increasing}, there is a constant $C>0$ depending on $u_0$ and $K$ such that $P(t)\ge C\,(1+t)^{-\alpha}$ for all $t\ge0$, which finishes the proof.
\end{proof}

We now prove \Cref{lem:minimization}.

\begin{proof}[Proof of Lemma~$\ref{lem:minimization}$]
First, taking $w = 2\chi_{[0,M/2]}$, we see that
\[
	\sup_{w\in X} \int_0^{+\infty} \phi(y)w(y)dy \geq 2\int_0^{M/2} \phi(y) dy.
\]
To show the opposite inequality, fix $\epsilon>0$ and find $w_\epsilon\in X$ so that
\begin{equation}\label{eq:w_epsilon}
	\int_0^{+\infty} \phi(y)w_\epsilon(y)dy
		\geq \sup_{w\in X} \int_0^{+\infty} \phi(y)w(y)dy - \epsilon.
\end{equation}
We assume, without loss of generality that $w_\e$ is non-negative since, otherwise, we may replace it with $|w_\e|$, which also satisfies~\eqref{eq:w_epsilon}. Let $v = 2 \chi_{[0,M/2]} - w_\epsilon\in X$ and observe that $v(y) \geq 0$ if $0<y \leq M/2$ and $v(y) \leq 0$ if $y \geq M/2$.  In addition, notice that $\int_0^{+\infty} v(y)\, dy \geq 0$.  Hence,
\[\begin{split}
	\int_0^{+\infty}\!\! \phi(y) 2 \chi_{[0,M/2]}(y) dy
		&= \int_0^{M/2}\!\! \phi(y)v(y)dy + \int_{M/2}^{+\infty}\!\!\phi(y)v(y)dy + \int_0^{+\infty} \!\!\phi(y) w_\epsilon(y) dy\\
		&\geq \int_0^{M/2}\!\! \phi(M/2)v(y)dy + \int_{M/2}^{+\infty}\!\!\phi(M/2)v(y)dy + \int_0^{+\infty}\!\! \phi(y) w_\epsilon(y) dy\\
		&\geq 0 +  \sup_{w\in X} \int_0^{+\infty} \phi(y) w(y)dy - \epsilon.
\end{split}\]
In the first inequality above, we used the sign of $v$ and the fact that $\phi$ is non-increasing.  In the second inequality, we used that $\int_0^{+\infty} v(y) dy \geq 0$ and~\eqref{eq:w_epsilon}.  Since the above is true for all $\epsilon>0$, then we obtain
\[
	2\int_0^{M/2} \phi(y)dy
		\geq \sup_{w\in X} \int_0^{+\infty} \phi(y) w(y) dy,
\]
finishing the proof.
\end{proof}


\subsection{Pointwise estimates when $K(x) \approx x^{-\alpha}$ as $x\to+\infty$}

As before, we show that \Cref{thm:acceleration}-(ii) and \Cref{thm:acceleration}-(iii) imply pointwise propagation.  A pointwise upper bound was constructed explicitly in the proof of~\Cref{thm:acceleration}-(ii) so we are concerned only with estimates from below.

First, we point out that we may argue as in \Cref{cor:acceleration_pointwise1} to obtain pointwise bounds when $u_0$ is even and radially non-increasing.  In order to see a more general argument, we show the following stronger version of \Cref{cor:acceleration_pointwise}.

\begin{corollary}\label{cor:acceleration_pointwise2}
Under the assumptions of Theorem~$\ref{thm:acceleration}$-{\rm{(iii)}}, suppose that there exist some constants $A\ge1$ and $\alpha \in (0,1)$ such that
	\[
		\frac{1}{A(1+x)^{\alpha}} \leq K(x) \leq \frac{A}{(1+x)^{\alpha}}
	\]
for all $x\in \R$. Then, for any $\delta \in (0,1/6)$, there exists a constant $C_0>0$ depending only on $u_0$, $K$ and $\delta$, such that
	\[
		\liminf_{t\to+\infty}\,t^{-1/\alpha}\Big(\frac{1}{t}\int_{0}^t \big|\{ x\in\R : u(s,x) \geq 1 - t^{-\delta}\}\big|\,ds\Big)\ge C_0.
	\]
\end{corollary}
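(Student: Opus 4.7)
The plan is to extract pointwise size information from the integral bounds in Theorem~\ref{thm:acceleration} by coupling the mass $P(s)=\int_\R u(s,x)\,dx$ with the bulk-burning rate $V(s)=\int_\R u(s,x)(1-u(s,x))\,dx$, via the identity $P'=V$ from~\eqref{eq:P_V}. The two-sided hypothesis on $K$ places us simultaneously in the settings of parts~(ii) and~(iii) of that theorem: the upper bound $K(x)\le A(1+x)^{-\alpha}$ gives $K\in L^p(\R)$ for every $p>1/\alpha$, so Theorem~\ref{thm:acceleration}-(ii) yields a constant $C_p>0$ (depending on $u_0$, $K$, $p$) with $P(t)\le C_p(1+t^p)$, while the lower bound $K(x)\ge A^{-1}(1+x)^{-\alpha}$ with Theorem~\ref{thm:acceleration}-(iii) provides a constant $c_1>0$ (depending on $u_0$, $K$) with $P(t)\ge c_1(1+t)^{1/\alpha}$ for every $t\ge 0$.

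The heart of the argument is a one-line layer-cake decomposition. Set $B_t(s):=|\{x\in\R:u(s,x)\ge 1-t^{-\delta}\}|$ and split $P(s)$ at the threshold $1-t^{-\delta}$. Using $0<u\le 1$ from Proposition~\ref{prop:well_posed}, the part where $u\ge 1-t^{-\delta}$ contributes at most $B_t(s)$; on the complement one has $1-u>t^{-\delta}$, which forces $u<t^\delta u(1-u)$, so that part is bounded by $t^\delta V(s)$. Hence, for every $s\in[0,t]$,
\[
P(s)\;\le\;B_t(s)+t^\delta V(s).
\]
Integrating over $s\in[0,t]$ and using $\int_0^t V\,ds=P(t)-P(0)\le P(t)$ together with the two-sided bound on $P$ gives, for any $p>1/\alpha$,
\[
\int_0^t B_t(s)\,ds\;\ge\;\int_0^t P(s)\,ds\,-\,t^\delta P(t)\;\ge\;\tfrac{\alpha\,c_1}{\alpha+1}\,t^{1+1/\alpha}(1+o(1))\,-\,C_p\,t^{p+\delta}.
\]

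To conclude, divide by $t^{1+1/\alpha}$ and let $t\to+\infty$; the negative term is negligible as soon as $p+\delta<1+1/\alpha$. Since $p$ may be chosen arbitrarily close to $1/\alpha$, this is possible for any $\delta<1$, and in particular throughout the range $\delta\in(0,1/6)$, yielding the claim with $C_0=\alpha c_1/(\alpha+1)$, up to a $\delta$-dependent choice of $p$ absorbed into the vanishing error. No new machinery is required beyond Theorem~\ref{thm:acceleration}. The only minor technicality I expect is that the constant $C_p$ from part~(ii) degenerates as $p\downarrow 1/\alpha$ (because $\|K\|_p\to+\infty$ there), so one must verify that a workable $p$ satisfying $p+\delta<1+1/\alpha$ still exists; this trade-off is harmless, as $\delta<1$ leaves a gap of size $1-\delta$ in which to pick $p$ strictly above $1/\alpha$.
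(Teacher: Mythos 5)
Your argument is correct, and it is genuinely simpler than the one in the paper. Both proofs start from the same inputs: the two-sided growth of $P(t)=\int_\R u(t,x)\,dx$ obtained from Theorem~\ref{thm:acceleration}-(ii) (with $K\in L^p(\R)$ for any $p>1/\alpha$) and Theorem~\ref{thm:acceleration}-(iii), together with $P'=V$ and $0<u\le1$ from Proposition~\ref{prop:well_posed}. The difference is in how the mass is localized on the level set $\{u\ge 1-t^{-\delta}\}$. The paper splits $\R$ at each time into a good set, a bad set $\{\epsilon<u<1-\epsilon\}$ and a tail set $\{u\le\epsilon\}$ with $\epsilon=t^{-\delta}$; the bad set is controlled by $\epsilon(1-\epsilon)|B_\epsilon|\le V$ and time-averaging, but the tail set must be cut at $|x|\sim s^{1/\alpha+\delta/2}$ and its far part controlled by the Gaussian super-solution bound~\eqref{eq:fast_super_solution}, i.e.\ by the Fokker--Planck tail estimate of Proposition~\ref{prop:fokker_planck} — the heaviest ingredient of that proof. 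Your single layer-cake split does away with this: on the whole complement $\{u<1-t^{-\delta}\}$ one has $1-u>t^{-\delta}$, hence $u\le t^{\delta}u(1-u)$, so bad set and tail are absorbed at once into $t^{\delta}V(s)$, and then $\int_0^t V\,ds=P(t)-P(0)\le C_p(1+t^p)$ kills this term after dividing by $t^{1+1/\alpha}$, provided $p+\delta<1+1/\alpha$, which your choice of $p$ close to $1/\alpha$ guarantees (and you correctly flag and dispose of the blow-up of $C_p$ as $p\downarrow 1/\alpha$). What your route buys: no pointwise heat-kernel/tail estimate is needed, the conclusion holds for every $\delta\in(0,1)$ rather than only $\delta\in(0,1/6)$, and the constant $C_0=\alpha c_1/(\alpha+1)$ is explicit and independent of $\delta$. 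What the paper's route buys in exchange is a spatial localization of the mass (the good and bad sets are shown to live essentially in $|x|\lesssim t^{1/\alpha+\delta/2}$), which is extra information not required for the stated corollary.
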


\begin{proof}
Fix $\delta \in (0,1/6)$.  Due to \Cref{thm:acceleration} and our assumptions on $K$, we know that there is a constant $C_1\ge1$ depending on $u_0$, $K$, and $\delta$ such that, for all $t\geq 1$,
\begin{equation}\label{eq:P_alpha}
	\frac{t^{1/\alpha}}{C_1}
		\leq P(t)
		\leq C_1\,t^{1/\alpha + \delta/2}.
\end{equation}

Define the time averaged burning rate
\[
	\overline V(t) := \frac{1}{t} \int_{0}^t V(s)ds
\]
for $t>0$. Using the relationship $P' = V$ in $(0,+\infty)$, we have that
\begin{equation}\label{eq:burning_bound}
		\overline V(t)
		= \frac{1}{t} \int_0^t P'(s)ds
		= \frac{1}{t} \left(P(t) -P(0)\right)
		\leq C_1\,t^{1/\alpha -1 + \delta/2}
\end{equation}
for all $t\ge1$. For any $\e\in(0,1/2)$ and $s>0$, define the {\em good} set, the {\em bad} set, and the {\em tail} set
\[\begin{split}
	&G_\epsilon(s) = \{x\in\R : u(s,x) > 1- \epsilon\},\quad
	B_\epsilon(s) = \{x\in\R : \epsilon < u(s,x) < 1-\epsilon\},\\
	&\text{ and }\qquad
	T_\epsilon(s) = \{x\in\R : u(s,x) \leq \epsilon\}.
\end{split}\]
Our goal is to obtain lower bounds on $|G_\epsilon|$ in an averaged sense with the choice $\epsilon = t^{-\delta}$ and $t\ge1$ large enough so that $0<t^{-\delta}<1/2$.

Firstly, we notice that $B_\epsilon(t)$ cannot be too big.  In particular, we have that, for any $t>0$ and $\epsilon\in(0,1/2)$,
\[
	\epsilon(1-\epsilon)|B_\epsilon(t)|
		\leq \int_{B_\epsilon(t)} u(t,x)\,(1-u(t,x))\, dx
		\leq V(t).
\]
Using this with~\eqref{eq:burning_bound}, we have that, for every $t\ge1$ and $\epsilon\in(0,1/2)$,
\begin{equation}\label{eq:bad_set}
	\frac{2}{t} \int_{t/2}^t |B_\epsilon(s)| ds
		\leq \frac{2}{t} \int_{0}^t |B_\epsilon(s)| ds
		\leq \frac{2\,C_1\,t^{1/\alpha - 1 + \delta/2}}{\epsilon(1-\e)} 
\end{equation}

Secondly, using the upper bound for $u$~\eqref{eq:fast_super_solution} with any $p\in(1/\alpha,+\infty)$ along with the bounds on $P$~\eqref{eq:P_alpha}, it is straightforward to check that
\begin{equation}\label{eq:tail_estimate}
	\lim_{t\to+\infty} \int_{\{|x|>t^{1/\alpha + \delta/2}\}} u(t,x)\,dx = 0.
\end{equation}

With these inequalities in hand, we obtain the desired lower bound for $|G_{t^{-\delta}}(s)|$ in an averaged sense.  Indeed, using $\|u\|_\infty \leq 1$,~\eqref{eq:P_increasing},~\eqref{eq:P_alpha},~\eqref{eq:bad_set}, and  the choice $\e = t^{-\delta}$, we have, for all $t$ large enough,
\[\begin{split}
	\frac{t^{1/\alpha}}{2^{1/\alpha}C_1}
		&\leq \frac{2}{t} \int_{t/2}^t P(s)ds\\
		&\leq \frac{2}{t} \int_{t/2}^t \left[ \int_{B_\epsilon(s)} u(s,x) dx + \int_{G_\epsilon(s)} u(s,x) dx + \int_{T_\epsilon(s) \cap[-s^{1/\alpha+\delta/2},s^{1/\alpha+\delta/2}]} u(s,x) dx\right.\\
		& \qquad\qquad\left. + \int_{\{|x|>s^{1/\alpha + \delta/2}\}} u(s,x) dx\right]ds\\
		&\leq \frac{2}{t} \int_{t/2}^t \left[ |B_\epsilon(s)| + |G_\epsilon(s)| + 2\epsilon s^{1/\alpha+\delta} + \int_{\{|x|>s^{1/\alpha + \delta/2}\}} u(s,x) dx\right]ds\\
		&\leq 8C_1 t^{1/\alpha - 1 + 3\delta/2} + \frac{2}{t} \int_0^t |G_\epsilon(s)|ds + 2t^{1/\alpha - \delta/2} + \frac{2}{t}\int_{t/2}^t\int_{\{|x|>s^{1/\alpha + \delta/2}\}} u(s,x) dx ds.
\end{split}\]
Using~\eqref{eq:tail_estimate}, it follows that the fourth term tends to zero.  Since, by assumption, $3\delta/2 < 1$, then, if $t$ is sufficiently large, we may absorb the first, third, and fourth terms from the right hand side into the left hand side.  We obtain, for all $t$ sufficiently large, that
\[
	\frac{t^{1/\alpha}}{2^{1/\alpha+1}\,C_1} 
		\leq \frac{2}{t}\int_0^t |G_{t^{-\delta}}(s)| ds.
\]
This concludes the proof.
\end{proof}


\section{Proofs of Propositions~\ref{prop:heat_kernel} and~\ref{prop:fokker_planck} and Lemma~\ref{lem:harnack}}\label{sec:lemmas}


\subsection{Heat kernel estimates: proof of Proposition~\ref{prop:heat_kernel}}

In this section, we prove~\Cref{prop:heat_kernel}.  We investigate the heat kernel for the equation
\begin{equation}\label{eq:heat_equation}
	w_t + (v_x w)_x = w_{xx} \qquad \text{ in } (s,+\infty)\times\R
\end{equation}
with $s\ge0$, where $v$ satisfies~\eqref{boundsv0}. We follow the Fabes-Stroock~\cite{FabesStroock} approach.  First, we look at weighted functions
\begin{equation}\label{eq:phi_alpha_def}
	\phi_\alpha(t,x) = e^{-\alpha x} w_\alpha(t,x)
\end{equation}
for $(t,x)\in[s,+\infty)\times\R$, where $\alpha\in\R$ and $w_\alpha$ solves~\eqref{eq:heat_equation} with initial data
\begin{equation}\label{walpha}
w_\alpha(s,x)=g(x)\,e^{\alpha x},
\end{equation}
for a fixed $g\in C_c(\R)$.   Then we prove the following bound.

\begin{lem}\label{weighted_upper_bound}
For any $\delta>0$, there exist a constant $C_\delta>0$, depending on $\delta$, and a constant $\tilde R>0$, depending only on $A_0$, $A_1$, and $A_2$,  so that, for every $\alpha\in\R$, $g\in C_c(\R)$ and $t>s\ge0$,
\[
	\|\phi_\alpha(t,\cdot)\|_\infty \leq \frac{C_\delta}{\sqrt{t-s}}\,e^{R_\alpha (t-s) + \delta \tilde R (\alpha^2 + 1) (t-s)} \|g\|_1,
\]
where $R_\alpha$ is given by
\[
	R_\alpha
		= \min\left\{\inf_{\epsilon\in(0,1)}\left[\alpha^2\left(1 + \frac{A_0^2}{1-\e}\right) + \frac{A_1^2}{4\e}\right] , \alpha^2(1 + A_0^2) + \frac{A_2}{2}
		 \right\}.
\]
\end{lem}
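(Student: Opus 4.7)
The plan is to follow the Fabes--Stroock scheme for parabolic heat kernel estimates, but with the $L^2$ step carefully optimized to capture the exact rate $R_\alpha$ while the subsequent Moser iteration contributes only an arbitrarily small exponential cost $\delta\tilde R(\alpha^2+1)(t-s)$.

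First I would derive the equation for $\phi_\alpha$. Applying the product rule to $w_\alpha = e^{\alpha x}\phi_\alpha$ in~\eqref{eq:heat_equation}, a direct computation gives
\[
(\phi_\alpha)_t + (v_x - 2\alpha)(\phi_\alpha)_x + (v_{xx} + \alpha v_x - \alpha^2)\phi_\alpha = (\phi_\alpha)_{xx}
\]
on $(s,+\infty)\times\R$ with $\phi_\alpha(s,\cdot) = g$. Since $g \in C_c(\R)$ and $v \in C^{1;2}_{t;x}$ with $v, v_x, v_{xx} \in L^\infty$, standard parabolic theory gives Gaussian decay of $\phi_\alpha$ at spatial infinity (uniformly in $t$ on bounded intervals), legitimizing the integrations by parts below.

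Next I would prove the sharp $L^2$ estimate. Multiplying the above equation by $\phi_\alpha$, integrating, and using $\int v_x(\phi_\alpha)_x\phi_\alpha = -\tfrac12\int v_{xx}\phi_\alpha^2$, one obtains
\[
\frac{d}{dt}\frac{1}{2}\|\phi_\alpha\|_2^2 = -\|(\phi_\alpha)_x\|_2^2 - \frac{1}{2}\int v_{xx}\phi_\alpha^2 - \alpha\int v_x \phi_\alpha^2 + \alpha^2 \|\phi_\alpha\|_2^2.
\]
The last three terms are then bounded in two complementary ways. Option (i): use $|v_{xx}|\le A_2$ crudely for the first term, and integrate by parts on $-\alpha\int v_x\phi_\alpha^2 = 2\alpha\int v\phi_\alpha(\phi_\alpha)_x$, bounding by $\|(\phi_\alpha)_x\|_2^2 + \alpha^2 A_0^2\|\phi_\alpha\|_2^2$; this cancels the dissipation and yields the rate $\alpha^2(1+A_0^2)+A_2/2$. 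Option (ii): integrate by parts $-\tfrac12\int v_{xx}\phi_\alpha^2 = \int v_x\phi_\alpha(\phi_\alpha)_x \le \epsilon\|(\phi_\alpha)_x\|_2^2 + (A_1^2/4\epsilon)\|\phi_\alpha\|_2^2$, and similarly $-\alpha\int v_x\phi_\alpha^2 \le (1-\epsilon)\|(\phi_\alpha)_x\|_2^2 + (\alpha^2 A_0^2/(1-\epsilon))\|\phi_\alpha\|_2^2$; the $\phi_x$ terms cancel exactly, leaving the rate $\alpha^2(1+A_0^2/(1-\epsilon)) + A_1^2/(4\epsilon)$. Taking the minimum over options and the infimum over $\epsilon$ gives $\|\phi_\alpha(t,\cdot)\|_2 \le e^{R_\alpha(t-s)}\|g\|_2$.

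Then I would bootstrap from $L^2$ to $L^\infty$ by Moser iteration. For each $k\ge1$, multiplying the PDE by $|\phi_\alpha|^{2^{k+1}-2}\phi_\alpha$ and repeating the same integration-by-parts/Young type manipulations (but now splitting dissipation with small weights $\sim 1/2^k$ between the $v_x$ and $v_{xx}$ terms) yields
\[
\frac{d}{dt}\|\phi_\alpha\|_{2^{k+1}}^{2^{k+1}} \le -c_k\,\|(\phi_\alpha^{2^k})_x\|_2^2 + \gamma_k(\alpha^2+1)\|\phi_\alpha\|_{2^{k+1}}^{2^{k+1}},
\]
with $c_k>0$ and $\gamma_k$ depending polynomially on $A_0,A_1,A_2$. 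Combining with the 1D Nash inequality $\|\psi\|_2^6\le C\|\psi_x\|_2^2\|\psi\|_1^4$ applied to $\psi=\phi_\alpha^{2^k}$, integrating over a small subinterval $I_k\subset[(s+t)/2,t]$ with $|I_k|\sim (t-s)/2^{k+1}$, and iterating gives
\[
\|\phi_\alpha(t,\cdot)\|_\infty \le \frac{C_\delta}{(t-s)^{1/4}}\, e^{\delta\tilde R(\alpha^2+1)(t-s)/2}\,\|\phi_\alpha((s+t)/2,\cdot)\|_2,
\]
where by choosing the splitting weights the sums $\sum_k \gamma_k|I_k|$ and $\prod_k C_k$ converge, with the former bounded by $\delta\tilde R(\alpha^2+1)(t-s)/2$ for any prescribed $\delta>0$. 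Finally, the formal adjoint of the operator for $\phi_\alpha$ has the same structural coefficients (obtained from $v$ by sign changes, with the same $L^\infty$ bounds), and the formula for $R_\alpha$ is invariant under $\alpha\to-\alpha$; hence applying the Moser argument to the adjoint equation and dualizing yields an $L^1\to L^2$ bound on $[s,(s+t)/2]$ with a matching rate $R_\alpha(t-s)/2 + \delta\tilde R(\alpha^2+1)(t-s)/2$ and a prefactor $C_\delta/(t-s)^{1/4}$. Composing this with the $L^2\to L^\infty$ bound above produces the claimed inequality, with $(t-s)^{-1/2}$ arising from the product of the two $(t-s)^{-1/4}$ factors. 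The main obstacle will be the bookkeeping in the iteration: one must arrange the weights in the per-step Young inequalities so that the dissipation constants $c_k$ stay positive and the growth constants $\gamma_k$ sum (after multiplication by $|I_k|$) to $\le\delta\tilde R(\alpha^2+1)(t-s)$, while simultaneously ensuring $\prod C_k$ converges; the key feature making this possible is that the $\alpha$-dependence in the growth rate has already been paid for sharply in the $L^2$ step, so the higher iterates only need a negligible amount of it.
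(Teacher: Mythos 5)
Your proposal follows essentially the same Fabes--Stroock route as the paper: the same optimized $L^2$ energy estimate (your two options reproduce exactly the two terms in $R_\alpha$), a Nash-type iteration to pass from $L^2$ to $L^\infty$ at the cost of a $\delta$-small exponential surcharge, and a duality/midpoint composition giving the $L^1\to L^\infty$ bound with the $(t-s)^{-1/2}$ prefactor. Two points in your sketch, however, do not work as written.

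First, the source of the $\delta$-smallness. You attribute it to the per-step Young weights ($\sim 2^{-k}$) while taking slivers $|I_k|\sim (t-s)/2^{k+1}$ that cover a fixed fraction of $[s,t]$. The zeroth-order growth at level $p=2^k$ is intrinsic: multiplying the $\phi_\alpha$-equation by $\phi_\alpha^{2p-1}$ produces, among others, the exact term $2p\,\alpha^2\int\phi_\alpha^{2p}$ together with contributions of size $2p\,|\alpha|A_1\int\phi_\alpha^{2p}$ and $2p\,A_2\int\phi_\alpha^{2p}$; shrinking the dissipation weights in Young's inequality only inflates the complementary constants, it cannot make these terms small. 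Consequently, with your choice of $I_k$ the total extra factor is at best $e^{C(\alpha^2+1)(t-s)}$ for a \emph{fixed} $C$ depending on $A_0,A_1,A_2$, not $e^{\delta\tilde R(\alpha^2+1)(t-s)}$ for arbitrary $\delta$ --- and this loss would propagate into a strictly worse $c^*$ in Corollary~\ref{cor:c^*}. The smallness must come from shrinking the time slivers relative to the level-$p$ growth rate (e.g.\ total sliver length $O(\delta(t-s))$, shrinking like $\delta\,4^{-k}$ against the crude rate $\sim p\tilde R_\alpha$ for $M_p$), which is exactly what the paper's bookkeeping achieves via $G_p=M_pe^{-\tilde R_\alpha pt}$, the weighted sup $\overline M_p(t)=\sup_{s'\in(0,t]}(s')^{(p-1)/(4p)}M_p(s')$, and integration of the level-$p$ inequality only over $[(1-\delta/p^2)t,t]$, yielding the per-step factor $e^{\delta\tilde R_\alpha t/p}$ whose exponents sum geometrically.

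Second, the duality step. The adjoint of $w_t+(v_xw)_x=w_{xx}$ is, after time reversal, $\psi_t-v_x\psi_x=\psi_{xx}$: it is in non-divergence form with \emph{no} zeroth-order term, so it is not ``the same structural coefficients obtained from $v$ by sign changes,'' and the conjugated adjoint operator differs from the $(\alpha,v)\mapsto(-\alpha,-v)$ version of your $\phi_\alpha$-equation by the zeroth-order term $v_{xx}$. The needed fact is true --- the same energy computation (using $\int v_x\tilde\phi\tilde\phi_x=-\tfrac12\int v_{xx}\tilde\phi^2$, etc.) gives the same rate $R_\alpha$ for the adjoint --- but it must be proved, not inferred from symmetry. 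The paper's way of organizing this is to run the whole argument for the two-field class $w_t+v^{(1)}_xw_x+v^{(2)}_{xx}w=w_{xx}$ (original: $v^{(1)}=v^{(2)}=v$; adjoint: $v^{(1)}=-v$, $v^{(2)}=0$) and then observe that $\tfrac12(R_{\alpha,v,v}+R_{\alpha,-v,0})\le R_\alpha$; you should either adopt this or explicitly redo the $L^2$ and iteration estimates for the adjoint equation.
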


Before beginning, we note the following abuse of notation.  Since our estimates depend only on $t-s$, we set $s = 0$ for the remainder of this section and obtain the desired bounds.  The general case is straightforward.

Let us now explain how the bound on $\Gamma$ follows from \Cref{weighted_upper_bound}.

\begin{proof}[Proof of Proposition~$\ref{prop:heat_kernel}$, from Lemma~$\ref{weighted_upper_bound}$]
Let $P_t^\alpha$ be the solution operator which gives us $\phi_\alpha$ from $g$.  Notice that then
\[
	P_t^\alpha g(x)
		= e^{-\alpha x} \int_\R \Gamma(t,0,x,y) g(y) e^{\alpha y}dy
		= \int_\R K_\alpha(t,x,y) g(y)dy
\]
for every $t>0$ and $x\in\R$, where $K_\alpha(t,x,y):= e^{-\alpha (x-y)}\Gamma(t,0,x,y)$ and $\Gamma$ is the heat kernel for~\eqref{eqGamma}, that is,~\eqref{eq:heat_equation} with Dirac mass at $y$ as initial condition.  The inequality in \Cref{weighted_upper_bound} gives us by duality that, for any $t>0$ and $(x,y)\in\R\times\R$,
\[
	0\le K_\alpha(t,x,y)
		\leq \frac{C_\delta}{\sqrt{t}}\,e^{R_\alpha t + \delta \tilde R(\alpha^2 + 1)t},
\]
which in turn gives us that
\[
	\Gamma(t,0,x,y) \leq \frac{C_\delta}{\sqrt{t}}\,e^{R_\alpha t + \delta \tilde R(\alpha^2 + 1)t+\alpha (x-y)}.
\]
This holds for all $\alpha\in\R$ so we may optimize in the following ways.  Notice that we may write each term in the infimum in $R_\alpha$ as $a_1\alpha^2 + a_2$.  More precisely the pairs $(a_1,a_2)$ of the type
\begin{equation}\label{eq:pairs}
	(a_1, a_2) = \left(1 + \frac{A_0^2}{1-\e}, \frac{A_1^2}{4\e}\right)\ \hbox{ with }0<\e<1
	\qquad \text{ or } \qquad
	(a_1,a_2) = \left(1 + A_0^2, \frac{A_2}{2}\right).
\end{equation}
Hence, in each case, choosing $\alpha = -(x-y)/(2t(\delta\tilde R + a_1))$, yields
\[
	\Gamma(t,0,x,y)
		\leq \frac{C_\delta}{\sqrt t}\, \exp\left\{ (a_2 + \delta \tilde R)t - \frac{(x-y)^2}{4t(\delta \tilde R + a_1)}\right\}.
\]
Substituting the values $(a_1,a_2)$ from~\eqref{eq:pairs} into the right hand side above, and changing $\delta\tilde{R}$ into $\delta$ (remember that $\tilde{R}$ does not depend on $\delta$) yields~\eqref{eq:kernel_bound}, finishing the proof.
\end{proof}


\subsection{Proof of \Cref{weighted_upper_bound}}

\begin{proof}[Proof of Lemma~$\ref{weighted_upper_bound}$]
We repeat that we can assume $s=0$ without loss of generality. The proof of Lemma~\ref{weighted_upper_bound} proceeds as follows.  First, we obtain an estimate on the growth of $\|\phi_\alpha(t,\cdot)\|_2$ in time $t$.  Then, using the Nash inequality, we obtain a sequence of inequalities on the $L^{2^{k+1}}$ norm in terms of the $L^{2^k}$ norm.  Iterating this procedure and using the growth in $L^2$ as a boundary condition, we obtain an $L^2 \to L^\infty$ estimate.  Using duality, this gives us an $L^1 \to L^2$ and then an $L^1 \to L^\infty$ estimate.

In order to apply duality, we obtain bounds for a slightly more general equation.  For any $v^{(1)}, v^{(2)}$ satisfying the same conditions~\eqref{boundsv0} as $v$, we investigate bounds for the equation
\begin{equation}\label{eq:heat_equation1}
	w_t + v_x^{(1)} w_x + v^{(2)}_{xx} w = w_{xx} \qquad \text{ in } (0,+\infty)\times\R.
\end{equation}
Let $\phi_\alpha(t,x) = e^{-\alpha x} w_\alpha(t,x)$ for $t\ge0$ and $x\in\R$, where $\alpha\in\R$ is fixed and $w_\alpha$ solves~\eqref{eq:heat_equation1} with initial condition
\begin{equation}\label{walpha0}
w_\alpha(0,x)=g(x)\,e^{\alpha x}
\end{equation}
and a fixed non-zero function $g\in C_c(\R)$

\subsubsection*{$L^2$ growth of $\phi_\alpha$}

Define
$$M_p(t) := \|\phi_\alpha(t,\cdot)\|_{L^{2p}(\R)}$$
for $p\ge1$ and $t\ge0$. All quantites $M_p(t)$ are positive real numbers. As mentioned above, the first step is in obtaining the inequality
\begin{equation}\label{eq:FS1}
	M_1(t) \leq e^{R_{\alpha,v^{(1)},v^{(2)}} t} M_1(0)
\end{equation}
for all $t\ge0$, with $R_{\alpha, v^{(1)},v^{(2)}}$ given in~\eqref{eq:R_alpha_v} below.

To this end, using~\eqref{eq:heat_equation} we have that $\phi_\alpha$ satisfies
\begin{equation}\label{eq:phi_alpha}
	e^{\alpha x} (\phi_\alpha)_t + v_x^{(1)}(e^{\alpha x}\phi_\alpha)_x + v_{xx}^{(2)} e^{\alpha x} \phi_\alpha = (e^{\alpha x} \phi_\alpha)_{xx} \qquad \text{ in } (0,+\infty)\times \R.
\end{equation}
Multiplying this by $e^{-\alpha x} \phi_\alpha(t,x)$ and integrating by parts gives us, for all $t>0$,
\begin{equation}\label{eq:L2_first_step}
\begin{split}
	\frac{1}{2}\,\frac{dM_1^2}{dt}(t)
		&= -\!\!\int_\R\!( (\phi_\alpha)_x\!-\!\alpha \phi_\alpha)((\phi_\alpha)_x\!+\!\alpha \phi_\alpha) dx\!-\!\!\int_\R\!\! v_{xx}^{(2)} \phi_\alpha^2 dx \!-\!\! \int_\R\!\! ((\phi_\alpha)_x \!+\! \alpha \phi_\alpha) \phi_\alpha v^{(1)}_x dx\\
		&= \alpha^2 M_1^2(t) \!-\!\! \int_\R\! |(\phi_\alpha)_x|^2 dx \!+\! 2\!\!\int_\R\! (\phi_\alpha)_x \phi_\alpha \Big(v^{(2)}_x \!-\!\frac{v^{(1)}_x}{2}\Big) dx \!-\! \alpha\! \int_\R\! \phi_\alpha^2 v_x^{(1)} dx,
\end{split}
\end{equation}
where all functions in the integrals are evaluated at $(t,x)$. Notice that all integrals converge since $w_\alpha(t,x)$ and then $\phi_\alpha(t,x)$ satisfy Gaussian estimates as $|x|\to+\infty$ locally uniformly in $t\in(0,+\infty)$, see~\cite{Aronson}. Notice also that the function $M_1$ is continuous on $[0,+\infty)$ and of class $C^1$ on $(0,+\infty)$, as are the functions $M_p$ for all $1\le p<+\infty$.

We now estimate the last two terms in~\eqref{eq:L2_first_step}.  Depending on $A_0$, $A_1$, and $A_2$, there are different ways to estimate the third term.  It may be estimated, for any $\epsilon_1 \in (0,1)$, as
\begin{equation}\label{eq:L2_v_x_split}
		2\int_\R (\phi_\alpha)_x \phi_\alpha \left( v_x^{(2)} - \frac{v_x^{(1)}}{2}\right) dx
			\leq \epsilon_1 \int_\R |(\phi_\alpha)_x|^2 dx + \frac{\left\|v_x^{(2)} - \frac{v_x^{(1)}}{2}\right\|_\infty^2}{\epsilon_1} \int_\R |\phi_\alpha|^2 dx,
\end{equation}
or it may be estimated as
\begin{equation}\label{eq:L2_v_xx}
	2\int_\R\! (\phi_\alpha)_x \phi_\alpha \left( v_x^{(2)} - \frac{1}{2} v_x^{(1)}\right) dx
		= - \int_\R\! \phi_\alpha^2 \left( v_{xx}^{(2)} - \frac{v^{(1)}_{xx}}{2}\right) dx
		\leq \left\|v_{xx}^{(2)} - \frac{v_{xx}^{(1)}}{2}\right\|_\infty \int_\R\! \phi_\alpha^2 dx.
\end{equation}
We bound the last term in~\eqref{eq:L2_first_step}, for any $\epsilon_2 \in (0,1]$, as follows:
\begin{equation}\label{eq:L2_v}
	-\alpha \int_\R \phi_\alpha^2 v^{(1)}_x dx
		= 2\alpha \int_\R \phi_\alpha (\phi_\alpha)_x v^{(1)} dx
		\leq \epsilon_2 \int_\R |(\phi_\alpha)_x|^2 dx + \frac{\alpha^2 A_0^2}{\epsilon_2} \int_\R \phi_\alpha^2 dx.
\end{equation}

Combining the estimates above gives us~\eqref{eq:FS1}.  Indeed, we obtain the two different choices of $R_\alpha$ as follows: on the one hand, using~\eqref{eq:L2_v_x_split} and~\eqref{eq:L2_v}, with $\epsilon_1 + \epsilon_2 = 1$, we obtain
\[
	\frac{1}{2}\,\frac{dM_1^2}{dt}(t)
		\leq  \left(\alpha^2 + \frac{\left\|v_x^{(2)} - \frac{v_x^{(1)}}{2}\right\|_\infty^2}{\epsilon_1} + \frac{\alpha^2 A_0^2}{1-\epsilon_1} \right) M_1^2(t);
\]
on the other hand, using~\eqref{eq:L2_v_xx} and~\eqref{eq:L2_v}, with $\epsilon_2 =1$, we obtain
\[
	\frac{1}{2}\,\frac{dM_1^2}{dt}(t)
		\leq \left( \alpha^2 + \alpha^2 A_0^2 + \left\|v_{xx}^{(2)} - \frac{v_{xx}^{(1)}}{2}\right\|_\infty\right) M_1^2(t).
\]
Defining
\begin{equation}\label{eq:R_alpha_v}
	R_{\alpha,v^{(1)},v^{(2)}}
		\!\!=\! \min\!\left\{\!
			\inf_{\epsilon_1\in(0,1)}\!\!\left[\alpha^2\!\left(\!1 \!+\! \frac{A_0^2}{1\!-\!\epsilon_1}\!\right)\!\!+\! \frac{\left\|v_x^{(2)} \!-\! \frac{v_x^{(1)}}{2}\right\|_\infty^2}{\epsilon_1}\right]\!\!,
			\alpha^2(1\!+\! A_0^2) \!+\!\! \left\|v_{xx}^{(2)} \!-\! \frac{v_{xx}^{(1)}}{2}\right\|_\infty\!
		\right\}\!,
\end{equation}
we obtain $(M_1^2)'(t)\le 2 R_{\alpha,v^{(1)},v^{(2)}}  M_1^2(t)$ for any $t>0$.  This differential inequality yields exactly~\eqref{eq:FS1}.

\subsubsection*{An $L^{p/2} \to L^p$ inequality}

We now obtain the inequality
\begin{equation}\label{eq:FS2}
	\frac{1}{2p}\,\frac{dM_p^{2p}}{dt}(t) \leq p\, \tilde R\,(\alpha^2 + 1)\,M_p^{2p}(t) - \frac{C}{p}\,\frac{M_p^{6p}(t)}{M_{p/2}^{4p}(t)}
\end{equation}
for every $t>0$ and $p=2^k$ with $k\in\N$ ($k\ge1$), where $\tilde{R}>0$ is a constant depending only on $A_0$, $A_1$ and $A_2$, and $C>0$ is a universal constant. Recall the estimates~\eqref{eq:FS1} for $M_1(t)$; here, our arguments above give us a ``boundary condition''.  With this, we may close the system of inequalities by considering $p = 2^k$ for any $k\in\N$ ($k\ge1$) and obtain the estimate using an ODE argument.

To obtain~\eqref{eq:FS2}, we begin by multiplying the equation for $\phi_\alpha$~\eqref{eq:phi_alpha} by $e^{-\alpha x}\phi_\alpha^{2p-1}(t,x)$ (for $p=2^k$, $k\in\N$, $k\ge1$) and integrating by parts, which yields:
\[\begin{split}
	\frac{1}{2p} \frac{d}{dt} \int_\R \phi_\alpha^{2p} dx
	 	&= \underbrace{-\int_\R \left((2p-1) \phi_\alpha^{2p-2} (\phi_\alpha)_x - \alpha \phi_\alpha^{2p-1}\right) \left(\alpha \phi_\alpha + (\phi_\alpha)_x\right)_xdx}_{:= I_1}\\
		&\quad \underbrace{-\int_\R \left((2p-1)\phi_\alpha^{2p-2} (\phi_\alpha)_x + \alpha \phi_\alpha^{2p-1} \right)\phi_\alpha v^{(1)}_x dx - \int_\R v_{xx}^{(2)} \phi_\alpha^{2p} dx}_{:=I_2}
\end{split}\]
for every $t>0$. First, we re-write and then estimate $I_1$ using Young's inequality to obtain
\[\begin{split}
	I_1
		&= \alpha^2 \int_\R \phi_\alpha^{2p} dx
			- (2p-1) \int_\R \phi_\alpha^{2p-2} ((\phi_\alpha)_x)^2 dx
			- 2(p-1) \alpha \int_\R \phi_\alpha^{2p-1} (\phi_\alpha)_x dx\\
		&\leq \alpha^2 p \int_\R \phi_\alpha^{2p} dx
			- \frac{1}{p} \int_\R ((\phi_\alpha^p)_x)^2 dx
\end{split}\]
Now we estimate the second term $I_2$.  To that end, we use our bounds on $v^{(1)}$ and $v^{(2)}$, together with Young's inequality, to obtain
\[\begin{split}
	I_2
		&\leq\|v_x^{(1)}\|_\infty \int_\R |\phi_\alpha|^{2p-1} |(\phi_\alpha)_x|dx + (\alpha\|v_x^{(1)}\|_\infty + \|v_{xx}^{(2)}\|_\infty) \int_\R \phi_\alpha^{2p}dx\\
		&\leq \frac{\tilde R}{2}\,(\alpha^2+1) \int_\R \phi_\alpha^{2p} dx + \frac{1}{2p^2}\int_\R ((\phi_\alpha^p)_x)^2 dx,
\end{split}\]
where $\tilde R\ge2$ is a constant depending only on $A_0$, $A_1$, and $A_2$.  For notational ease, let
$$\tilde R_\alpha := \tilde R\,(\alpha^2+1).$$
Then, since $p\ge1$, one infers that
\[
	\frac{1}{2p}\, \frac{dM_{p}^{2p}}{dt}(t)
		\leq p\,\tilde R_{\alpha}\,M_p^{2p}(t) - \frac{1}{2p} \int_\R ((\phi_\alpha^p)_x)^2 dx
\]
for all $t>0$. From the Nash inequality, there is a universal constant $C>0$ such that
\begin{equation}\label{nash}
2\,C\,\|\psi\|_{L^2(\R)}^6\le\|\psi'\|_{L^2(\R)}^2\,\|\psi\|_{L^1(\R)}^4
\end{equation}
for all $\psi\in H^1(\R)\cap L^1(\R)$. Applying this inequality to $\psi=\phi_\alpha^p(t,\cdot)$ yields~\eqref{eq:FS2}.

\subsubsection*{From~\eqref{eq:FS2} to an $L^\infty$ bound}

Letting now $G_p(t) = M_p(t) e^{-\tilde R_\alpha p t}$, we have that, for every $t>0$ and $p=2^k$ with $k\in\N$ ($k\ge1$),
\begin{equation}\label{ineqGp}
	\frac{1}{4p}\,\frac{dG_p^{-4p}}{dt}(t)
		\geq \frac{C}{p}\,\frac{e^{4p^2 \tilde R_\alpha t}}{M_{p/2}^{4p}(t)}.
\end{equation}
Now, we define
$$\overline M_p(t) = \sup_{s \in (0,t]} s^{(p-1)/4p}M_p(s)$$
with the intention of leveraging the fact that $\overline M_p$ is non-decreasing in $t$ (notice that the supremum could be taken over $[0,t]$ without any change).  Fix any $\delta \in (0,1)$.  Then the above equation~\eqref{ineqGp} becomes, after substituting, integrating, and estimating the integral,
\[\begin{split}
	\frac{1}{G_p^{4p}(t)}
		&\geq 4\,C \int_0^t\frac{s^{p-2}e^{4p^2 \tilde R_\alpha s}}{s^{p-2}M_{p/2}^{4p}(s)} ds
		\geq \frac{4\,C}{\overline M_{p/2}^{4p}(t)} \int_{(1-\delta/p^2)t}^t s^{p-2} e^{4p^2 \tilde R_\alpha s}ds\\
		&\geq 4\,C\,\frac{t^{p-1}}{p}\,e^{(1- \delta/p^2) 4p^2 \tilde R_\alpha t} \left(1 - \left(1 - \frac{\delta}{p^2}\right)^{p-1}\right) \overline M_{p/2}^{-4p}(t),
\end{split}\]
for every $t>0$ and $p=2^k$ with $k\in\N$, $k\ge1$. There exists a constant $C_\delta > 0$, depending only on $\delta$, such that $1 - (1-\delta/p^2)^{p-1} \geq 1/(C_\delta p)$ for all $p=2^k$ with $k\in\N$, $k\ge1$.  Hence we have that
\[
	G_p^{4p}(t)
		\leq C_\delta\,\frac{p^2}{t^{p-1}}\,e^{-(1- \delta/p^2) 4p^2 \tilde R_\alpha t}\,\overline M_{p/2}^{4p}(t),
\]
for every $t>0$ and $p=2^k$ with $k\in\N$, $k\ge1$, where we have absorbed the universal constant $C$ given by~\eqref{nash} into $C_\delta$. Re-writing this in terms of $M_p$ yields
\begin{equation}\label{eq:kernel_last_step1}
	M_p(t)
		\leq C_\delta^{1/4p} \left(\frac{p^2}{t^{p-1}}\right)^{1/4p} e^{\delta \tilde R_\alpha t/ p}\,\overline M_{p/2}(t).
\end{equation}
Fixing $t>0$, considering the above inequality at any $s\in(0,t]$, multiplying by $s^{(p-1)/4p}$ and taking the supremum over $(0,t]$, we obtain
\begin{equation}\label{eq:kernel_last_step2}
	\overline M_p(t)
		\leq C_\delta^{1/4p} p^{1/(2p)} e^{\delta \tilde R_\alpha t/ p}\,\overline M_{p/2}(t)
\end{equation}
for every $t>0$ and $p=2^k$ with $k\in\N$, $k\ge1$. Plugging~\eqref{eq:kernel_last_step2} into~\eqref{eq:kernel_last_step1} $k$ times with $p = 2^k, 2^{k-1}, \dots, 2^1$, we obtain, for every $t>0$,
\[\begin{split}
	\|\phi_\alpha(t,\cdot)\|_\infty
		\le \limsup_{k\to+\infty} M_{2^k}(t) & \le\limsup_{k\to+\infty}\Big(t^{-(2^k-1)/(4\cdot 2^k)}\overline{M}_{2^k}(t)\Big)\\
		& \leq t^{-1/4}\limsup_{k\to+\infty}\left(C_\delta^{\sum_{\ell=1}^k 2^{-2-\ell}} \left(\prod_{\ell=1}^k 2^\frac{\ell}{2^{\ell+1}}\right) e^{\delta \tilde R_\alpha t \sum_{\ell=1}^k 2^{-\ell}}\, \overline M_1(t)\right).
\end{split}\]
Using the summability of $\ell\,2^{-\ell}$, the definition of $\overline M_1(t)$ and~\eqref{eq:FS1}, we get that, for every $t>0$,
\begin{equation}\label{eq:FS3}
	\|\phi_\alpha(t,\cdot)\|_\infty\leq \frac{C_\delta'}{t^{1/4}}\, e^{(R_{\alpha,v^{(1)},v^{(2)}} + \delta \tilde R_\alpha) t}\,\|\phi_\alpha(0,\cdot)\|_2=\frac{C_\delta'}{t^{1/4}}\, e^{(R_{\alpha,v^{(1)},v^{(2)}} + \delta \tilde R_\alpha) t}\,\|g\|_2,
\end{equation}
where $C_\delta'>0$ is a constant depending only on $\delta$. 

\subsubsection*{From an $L^2 \to L^\infty$ bound to an $L^1 \to L^\infty$ bound}

To conclude we use a standard technique. Let $S_{\alpha,v^{(1)},v^{(2)}}(t;s)$ be the solution operator sending the initial data $g$ at time $s\ge0$ to $\phi_\alpha$ at time $t>s$ where $\phi_\alpha$ is defined by~\eqref{eq:phi_alpha_def},~\eqref{eq:heat_equation1} and~\eqref{walpha0}.  Then~\eqref{eq:FS3} with the choice $v^{(1)} = v^{(2)} = v$ implies that
\begin{equation}\label{eq:solution_bound}
	\|S_{\alpha,v,v}(t;s)\|_{L^2 \to L^\infty}
		\leq \frac{C_\delta'\,e^{(R_{\alpha,v,v} + \delta \tilde R_\alpha)(t-s)}}{(t-s)^{1/4}}
\end{equation}
for all $t>s\ge0$. On the other hand,~\eqref{eq:FS3} with the choice $v^{(1)} = -v$ and $v^{(2)} = 0$ and with replacing $\alpha$ by $-\alpha$ yields that
\begin{equation}\label{eq:adjoint_bound}
	\|S_{-\alpha,-v,0}(t;s)^*\|_{L^1 \to L^2}
		\leq \frac{C_\delta'\,e^{(R_{-\alpha,-v,0}+ \delta \tilde R_{-\alpha})(t-s)}}{(t-s)^{1/4}}
\end{equation}
for all $t>s\ge0$, where $S_{-\alpha,-v,0}(t;s)^*$ is the adjoint operator of $S_{-\alpha,-v,0}(t;s)$, or the solution operator to the adjoint equation. Since, by a straightforward computation $S_{-\alpha,-v,0}(t;s)^* = S_{\alpha,v,v}(t;s)$, and since $R_{-\alpha,-v,0}=R_{\alpha,-v,0}$ and $\tilde{R}_{-\alpha}=\tilde{R}_{\alpha}$, we have that, for problem~\eqref{eq:heat_equation}-\eqref{walpha},
\[\begin{split}
	\|\phi_\alpha(t,\cdot)\|_\infty
		&= \|S_{\alpha,v,v}(t;t/2)S_{\alpha,v,v}(t/2;0)g\|_\infty
		= \|S_{\alpha,v,v}(t;t/2)S_{-\alpha,-v,0}(t/2;0)^*g\|_\infty\\
		&\leq \frac{2^{1/4}C_\delta'e^{(R_{\alpha,v,v}\!+\!\delta \tilde R_\alpha)t/2}}{t^{1/4}} \|S_{-\alpha,-v,0}(t/2;0)^*g\|_2
		\leq \frac{2^{1/2}(C_\delta')^2e^{((R_{\alpha,v,v}\!+\!R_{\alpha,-v,0})/2\!+\!\delta \tilde R_\alpha)t}}{t^{1/2}} \|g\|_1
\end{split}\]
for all $t>0$. To conclude, we simply note that
\[
	\frac{R_{\alpha, v,v} + R_{\alpha,-v,0}}{2}
		\leq \min\left\{\inf_{\epsilon\in(0,1)}\left[\alpha^2\left(1 + \frac{A_0^2}{1-\e}\right) + \frac{A_1^2}{4\e}\right] , \alpha^2(1 + A_0^2) + \frac{A_2}{2}
		 \right\} =: R_\alpha.
\]
\end{proof}

We note that, in the last step, we could have replaced $R_\alpha$ with a sharper, more complicated bound on $(R_{\alpha,v,v} + R_{\alpha,-v,0})/2$.  Since this does not provide any benefits when considering the asymptotic limit $A_i \to 0$ for all $i\in\{0,1,2\}$, we omit it.


\subsection{Upper bounds on the tails: proof of \Cref{prop:fokker_planck}}

In order to prove \Cref{prop:fokker_planck}, we begin by stating a bound due to Hill.  We use this bound in the sequel, as well, to derive our local-in-time Harnack inequality.

\begin{lem}[{\cite[Theorem~2.1]{Hill}}] \label{lem:weak_heat_kernel}
	Suppose that $v:(0,+\infty)\times\R^n\to\R^n$ is a bounded vector field with $\||v|\|_{L^{\infty}((0,+\infty)\times\R^n)}\le A<+\infty$. Let $\Gamma(t,s,x,y)$ be the fundamental solution to
\begin{equation}\label{gammav}
\begin{cases}
		\Gamma_t + v\cdot\nabla_x\Gamma = \Delta_x \Gamma \qquad \qquad \text{ in } (s,+\infty)\times \R^n,\\
		\Gamma(t=s,s,x,y) = \delta_y(x)
	\end{cases}
\end{equation}
for any $t > s\ge0$ and $y\in\R^n$. Then
\[
	\Gamma(t,s,x,y)
		\leq \left(\frac{1}{\sqrt{4\pi(t-s)}}+\frac{A}{2}\right)^n
\]
for all $t>s\ge0$ and $(x,y)\in\R^n\times\R^n$, and
$$\Gamma(t,s,x,y)\leq\left(\!\frac{1}{\sqrt{4\pi(t\!-\!s)}}\!+\!\frac{A}{2}\!\right)^{n-1}\!\!\!\!\times\left(\!\frac{1}{\sqrt{4\pi(t\!-\!s)}}\!+\!\frac{A\sqrt{t\!-\!s}}{\sqrt{4\pi}(|x\!-\!y|\!-\!A(t\!-\!s))}\!\right)e^{-\frac{(|x\!-\!y|\!-\!A(t\!-\!s))^2}{4(t\!-\!s)}}$$
if $|x-y|>A\,(t-s)$. Further, if $|x-y| > A\,\sqrt{n}\,(t-s)$ then
\begin{equation}\label{boundshill}
	\frac{e^{-\frac{(|x-y| + A\,\sqrt{n}\,(t-s))^2}{4(t-s)}}}{(16\pi (t-s))^{n/2}}
		\leq \Gamma(t,s,x,y).
\end{equation}
\end{lem}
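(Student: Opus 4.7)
The plan is to rely on the probabilistic representation of the fundamental solution together with a Girsanov change of measure. Let $(B_r)_{r\ge s}$ be a standard $n$-dimensional Brownian motion and set $Y_r := x+\sqrt 2(B_r-B_s)$, so that $Y_t$ has Gaussian density $g_\tau(z-x) = (4\pi\tau)^{-n/2}e^{-|z-x|^2/(4\tau)}$ with $\tau := t-s$. Under the tilted measure $\mathbb P$ with Radon--Nikodym derivative against Wiener measure given by the Girsanov exponential
\begin{equation*}
\mathcal E_t := \exp\Bigl(-\tfrac{1}{\sqrt 2}\!\int_s^t\! v(r,Y_r)\cdot dB_r - \tfrac 14\!\int_s^t\!|v(r,Y_r)|^2\,dr\Bigr),
\end{equation*}
the process $Y$ satisfies $dY_r = -v(r,Y_r)\,dr+\sqrt 2\,d\tilde B_r$, so $\Gamma(t,s,x,\cdot)$ is its $\mathbb P$-transition density. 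Changing measure back yields the key factorisation
\begin{equation*}
\Gamma(t,s,x,z) = g_\tau(z-x)\,\mathbb E_{\mathbb W}[\mathcal E_t\mid Y_t = z],
\end{equation*}
in which the conditional expectation is taken with respect to the Brownian bridge from $x$ at time $s$ to $z$ at time $t$.

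For the two upper bounds I would estimate $\mathbb E_{\mathbb W}[\mathcal E_t\mid Y_t=z]$ from above by decomposing the stochastic integral $\int v\cdot dB_r$ via the bridge identity $dB_r = d\hat B_r + \frac{z-Y_r}{\sqrt 2(t-r)}\,dr$: the martingale piece has zero conditional expectation, while the drift piece produces a contribution bounded by $\tfrac12 A|z-x|$ after using $|v|\le A$, and together with $-\tfrac14\int|v|^2\,dr\le 0$ this yields an upper bound on the exponent that, combined with $g_\tau(z-x)$, gives a coordinate-wise estimate. A product-structure argument on the $n$ orthogonal coordinates, with each coordinate contributing the factor $(4\pi\tau)^{-1/2}+A/2$ (the $A/2$ arising from the bridge-drift contribution), produces the global bound. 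For the tail bound $|x-y|>A\tau$, the Gaussian weight in the radial direction is improved after square completion to $\exp\bigl(-(|x-y|-A\tau)^2/(4\tau)\bigr)$, while the Mills-ratio inequality $\int_a^\infty e^{-r^2/(4\tau)}\,dr\le (2\tau/a)e^{-a^2/(4\tau)}$ applied with $a = |x-y|-A\tau$ produces the extra radial prefactor $A\sqrt{\tau}/(\sqrt{4\pi}(|x-y|-A\tau))$; the remaining $n-1$ tangential coordinates still contribute $((4\pi\tau)^{-1/2}+A/2)^{n-1}$.

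For the lower bound when $|x-y|>A\sqrt n\,\tau$ I would apply Jensen's inequality in the opposite direction to the factorisation, $\mathbb E_{\mathbb W}[\mathcal E_t\mid Y_t=z]\ge\exp\bigl(\mathbb E_{\mathbb W}[\log\mathcal E_t\mid Y_t=z]\bigr)$. Repeating the Brownian-bridge computation and using Cauchy--Schwarz on the bridge integral together with the dimensional bridge variance of order $n(r-s)(t-r)/(t-s)$ yields an estimate of the form $|\mathbb E_{\mathbb W}[\log\mathcal E_t\mid Y_t=z]|\le \tfrac12 A\sqrt n\,|z-x|+\tfrac14 A^2 n\,\tau$. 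Substituting into the Girsanov identity and completing the square against $|z-x|^2/(4\tau)$ in $g_\tau(z-x)$ yields the claimed exponent $-(|x-y|+A\sqrt n\,\tau)^2/(4\tau)$, while the prefactor $(16\pi\tau)^{-n/2}=2^{-n}(4\pi\tau)^{-n/2}$ arises from the constant $2^{-n}$ absorbed during the square-completion step.

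The main obstacle is obtaining the sharp constants in the upper bounds: naive applications of Cauchy--Schwarz or Jensen directly to $\mathcal E_t$ produce undesirable exponential-in-$\tau$ growth factors rather than the stated power-law form, so the Brownian-bridge decomposition must be performed carefully, coordinate by coordinate, in order to preserve the precise one-dimensional factor $(4\pi\tau)^{-1/2}+A/2$ along each direction. For the lower bound, the analogous difficulty is computing the conditional expectation of $\log\mathcal E_t$ with enough precision to capture the dimensional factor $\sqrt n$ in the shift $A\sqrt n\,\tau$ and the numerical factor $2^{-n}$ in the prefactor.
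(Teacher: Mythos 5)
Your route is genuinely different from the paper's, and it has real gaps. The paper does not prove these kernel bounds from scratch: it quotes the coordinate-wise estimates of Hill's Theorem~2.1, namely $\Gamma(t,0,x,0)\le\prod_{i=1}^n\big((4\pi t)^{-1/2}e^{-(|x_i|-A_it)^2/(4t)}+\tfrac{A_i}{4}\erfc(\tfrac{|x_i|}{2\sqrt t}-\tfrac{A_i\sqrt t}{2})\big)$ and the analogous lower bound with the signs reversed, and then deduces the stated inequalities by elementary manipulations of $\erfc$ (using $\erfc\le 2$ to get the additive $A/2$, the bound $\erfc(z)\le \pi^{-1/2}z^{-1}e^{-z^2}$ for $z>0$ to get the radial prefactor, and the inequality $\sum_i(|x_i|+A_it)^2\le(|x|+A\sqrt n\,t)^2$ together with $1-\tfrac{A_i}{|x_i|/t+A_i}\ge\tfrac12$ for the lower bound). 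Your proposal instead tries to reprove Hill's estimates via Girsanov and the Brownian bridge, which is a much harder task, and the two places where you yourself flag difficulty are exactly where the argument fails to close.

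Concretely: (1) for the upper bound, the bridge-drift contribution $\exp\big(-\tfrac12\int_s^t v(r,Y_r)\cdot\tfrac{z-Y_r}{t-r}\,dr\big)$ cannot be bounded pathwise by $e^{\frac12A|z-x|}$, since $\int_s^t|z-Y_r|(t-r)^{-1}dr$ is not controlled by $|z-x|$ along bridge paths; and even granting a bound of that multiplicative form, completing the square against $g_\tau(z-x)$ yields $(4\pi\tau)^{-1/2}e^{A^2\tau/4}$ per coordinate, i.e.\ precisely the exponential-in-$\tau$ growth you acknowledge, not the additive factor $(4\pi\tau)^{-1/2}+A/2$, whose origin in Hill's statement is the term $\tfrac{A_i}{4}\erfc(\cdot)\le\tfrac{A_i}{2}$ and which does not arise from a Gaussian tilt. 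The coordinate-wise product structure is also not available from your representation when the components of $v$ are coupled. (2) For the lower bound, your claimed intermediate estimate $|\mathbb E[\log\mathcal E_t\mid Y_t=z]|\le\tfrac12A\sqrt n\,|z-x|+\tfrac14A^2n\tau$ is not what the bridge computation gives: bounding $\mathbb E\big[\int_s^t|v(r,Y_r)|\,|z-Y_r|(t-r)^{-1}dr\big]$ using the bridge variance produces an additional term of order $A\sqrt{n\tau}$ (from $\int_s^t(t-r)^{-1}\sqrt{(r-s)(t-r)/\tau}\,dr\sim\sqrt\tau$), and this term is not absorbed by the available budget, which consists only of the prefactor loss $2^{-n}$ and the slack $\tfrac14A^2(n-1)\tau$; for $n=1$ that slack vanishes and $e^{-cA\sqrt\tau}$ overwhelms the constant $1/2$ whenever $A\sqrt\tau$ is of order one or larger. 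Unless you can remove the bridge-fluctuation term (which requires exploiting cancellation in $\mathbb E[v(r,Y_r)\cdot(z-Y_r)]$ that is unavailable for a general bounded $v$), the Jensen route does not yield~\eqref{boundshill}. The efficient fix is to do what the paper does: take Hill's coordinate-wise bounds as the cited input and reduce the lemma to the $\erfc$ inequalities above.
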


We briefly mention how to obtain these bounds from \cite[Theorem~2.1]{Hill}.  We first show how to obtain the lower bound~\eqref{boundshill}. Notice that this bound is invariant under rotation of $x-y$ and under translation in $(t,s)$ and $(x,y)$, and that the bound on $|v|$ is also invariant by translation and rotation of the frame. We may then assume $s=0<t$, $y=0$ and $|x_i| = |x|/\sqrt n> A\,t$ for all $i = 1,\dots, n$ since we may otherwise rotate and translate the entire system. Hill \cite[Theorem~2.1]{Hill} proves that
\[
	\Gamma(t,0,x,0)
		\geq \prod_{i=1}^n \left(\frac{1}{\sqrt{4\pi t}}\,e^{-\frac{(|x_i| + A_it)^2}{4t}} - \frac{A_i}{4}\erfc\left(\frac{|x_i|}{2\sqrt t} + \frac{A_i\sqrt t}{2} \right) \right),
\]
where $\erfc(z):= 2 \pi^{-1/2} \int_z^\infty e^{-s^2}ds$ for any $z \in \R$ and where $A_i:=\|v_i\|_{L^{\infty}((0,+\infty)\times\R^n}\le A$ denotes the $L^{\infty}$ norm of the $i$-th component $v_i$ of $v$. Using standard estimates, for $z>0$, we have $\erfc(z) \leq \pi^{-1/2} e^{-z^2}/z$.  Hence, we have that
\[\begin{split}
	\Gamma(t,0,x,0)
		&\geq \prod_{i=1}^n \left(\frac{1}{\sqrt{4\pi t}}e^{-\frac{(|x_i| + A_i  t)^2}{4t}} - \frac{A_i}{4}\,\frac{2}{\sqrt{\pi} \left(\frac{|x_i|}{\sqrt t} + A_i\sqrt t\right)}e^{- \frac{(|x_i| + A_it)^2}{4t}} \right)\\
		&\geq \frac{e^{-\frac{(|x| + A\,\sqrt{n}\,t)^2}{4t}}}{(4 \pi t)^{n/2}} \prod_{i=1}^n \left(1 - \frac{A_i}{\frac{|x_i|}{t} + A_i}\right)
		\geq  \frac{e^{-\frac{(|x| + A \,\sqrt{n}\,t)^2}{4t}}}{(16 \pi t)^{n/2}},
\end{split}\]
where in the second-to-last inequality we used that $|x_i| > A t \geq A_i t$ (implying in particular that all factors in the product are positive) and that
$$(|x_1|+A_1t)^2+\cdots+(|x_n|+A_nt)^2\le|x|^2+2\,|x|\,A\,\sqrt{n}\,t+n\,A^2\,t^2=(|x|+A\,\sqrt{n}\,t)^2.$$
This is exactly the bound claimed above.

To get the upper bounds, we may assume without loss of generality that $s=0<t$, $y=0$ and $|x_1|=|x|$ (hence, $x_2=\cdots=x_n=0$). Hill \cite[Theorem~2.1]{Hill} shows that
\[
	\Gamma(t,0,x,0)
		\leq \prod_{i=1}^n \left(\frac{1}{\sqrt{4\pi t}}\,e^{-\frac{(|x_i| - A_it)^2}{4t}} + \frac{A_i}{4}\erfc\left(\frac{|x_i|}{2\sqrt t} - \frac{A_i\sqrt t}{2} \right) \right)\le\left(\frac{1}{\sqrt{4\pi t}}+\frac{A}{2}\right)^n.
\]
Furthermore, if $|x|=|x_1|>A\,t$, then $|x_1|-A_1\,t\ge|x|-A\,t>0$ and
$$\begin{array}{rcl}
\Gamma(t,0,x,0) & \leq & \displaystyle\left(\frac{1}{\sqrt{4\pi t}}e^{-\frac{(|x_1| - A_1t)^2}{4t}} + \frac{A_1}{4}\erfc\left(\frac{|x_1|}{2\sqrt t} - \frac{A_1\sqrt t}{2} \right) \right)\times\left(\frac{1}{\sqrt{4\pi t}}+\frac{A}{2}\right)^{n-1}\vspace{3pt}\\
& \leq & \displaystyle\left(\frac{1}{\sqrt{4\pi t}}+ \frac{A\,\sqrt{t}}{\sqrt{4\pi}\,(|x|-At)} \right)\times e^{-\frac{(|x| - A\,t)^2}{4t}} \times\left(\frac{1}{\sqrt{4\pi t}}+\frac{A}{2}\right)^{n-1}.\end{array}$$
These bounds are the upper bounds claimed in Lemma~\ref{lem:weak_heat_kernel}.

\begin{proof}[Proof of \Cref{prop:fokker_planck}]
The proof of \Cref{prop:fokker_planck} is a straightforward application of \Cref{lem:weak_heat_kernel}.  Indeed, letting $\Gamma$ be the fundamental solution of~\eqref{eq:kfp}.  Then, we notice that $\tilde \Gamma(t,s,x,y) := \Gamma(t,s,y,x)$ is the fundamental solution of the adjoint operator of~\eqref{eq:kfp}, $\partial_t - v\partial_x - \partial_{xx}$; see, for example,~\cite[Theorem~10]{Aronson}. Hence, by extending the field $v$ with, say, $v=0$ in $(T,+\infty)\times\R$ (the extended field $v$ still satisfies $\|v\|_{L^{\infty}((0,+\infty)\times\R}\le A$), we apply \Cref{lem:weak_heat_kernel} to $\tilde{\Gamma}$  to obtain, for all $T\ge t>s\ge0$ and $|x-y| \geq A(t-s) + 1$,
\[
	\Gamma(t,s,x,y) = \tilde \Gamma(t,s,y,x)
		\leq \left(\frac{1}{\sqrt{4\pi(t-s)}}+\frac{A\sqrt{t-s}}{\sqrt{4\pi}(|x-y|-A(t-s))}\right)e^{-\frac{(|x-y|-A(t-s))^2}{4(t-s)}}.
\]
Using this, along with the fundamental solution representation of $u$ yields, for all $|x|\ge A\,T+a+1$,
\[\begin{split}
	u(T,x)
		&= \int_\R \Gamma(T,0,x,y)\,u_0(y)\, dy\\
		&\leq \int_\R \left(\frac{1}{\sqrt{4\pi T}} + \frac{A\sqrt{T}}{\sqrt{4\pi}\,(|x-y|-A\,T)} \right) e^{-\frac{(|x-y|- A\,T)^2}{4T}}\,\ind_{[-a,a]}(y)\,dy\\
		&\leq \frac{a}{\sqrt{\pi}}\left(\frac{1}{\sqrt{T}} + \frac{A\sqrt{T}}{|x|-A\,T-a} \right) e^{-\frac{(|x|-A\,T-a)^2}{4T}},
\end{split}\]
which concludes the proof.
\end{proof}


\subsection{The local-in-time Harnack inequality: proof of \Cref{lem:harnack}}\label{sec:harnack}

The final technical lemma to prove is \Cref{lem:harnack}.  We do that here. Our proof proceeds as follows: first, we notice that the heat kernel bound due to Hill~\cite{Hill} on operators of the form $\partial_t + v\cdot \nabla - \Delta$ is sharp in the spatial decay of the tails even if the bound is quite weak when $|x|$ is small.  The decay in the tails is crucial in the first step of \Cref{lem:harnack} where we show that $u(t,x)$ and $u(t+s,y)^{1/p}$ may be compared at any two points $x,y$.  Finally, we use this to bootstrap to the gradient Harnack estimate, finishing the proof of \Cref{lem:harnack}.  In this last step we use crucially that our new Harnack inequality has the shift forward in time for any $s\in[0,s_0]$.

\begin{proof}[Proof of Lemma~$\ref{lem:harnack}$] 
As mentioned above, we first prove the inequality involving only $u$.  To this end, fix $t_0>0$, $s_0\ge0$, $R>0$, $p \in (1,+\infty)$ and $t \geq t_0 > 0$, and let $q$ be the conjugate exponent to $p$, $\delta = \min\{t_0/2, 1\}$, $\alpha = (1+p)/(2p)>0$, and
$$A =\sup_{t>0}\Big(\|\,|F(u(t,\cdot))|\,\|_{L^{\infty}(\R^n)}+\|\nabla\cdot F(u(t,\cdot)\|_{L^{\infty}(\R^n)}\Big).$$
Notice that $\alpha\, p > 1$, that $\alpha < 1$, that $\alpha$ depends only on $p$ while $\delta$ depends only on $t_0$, and that $A$ is a real number from the assumptions on $F$ and $u$. Define $\overline u:(s',x)\mapsto\overline{u}(s',x)$ that solves
\[\begin{cases}
	\overline u_{s'} + F(u(s',\cdot)) \cdot \nabla \overline u = \Delta \overline u, \qquad &\text{ in } (t - \delta,+\infty) \times \R^n\\
	\overline u(t-\delta,\cdot) = u(t-\delta,\cdot) &\text{ in } \R^n.
\end{cases}\]
It follows from the maximum principle that $0\le\overline{u}\le\|u\|_\infty$ in $(t-\delta,+\infty)\times\R^n$. A straightforward computation shows that the functions
$$u^-(s',x)=\min\{1,\|u\|_{\infty}^{-1}\}\,e^{-A(s'-t+\delta)}\,\overline u(s',x)\ \hbox{ and }\ u^+(s',x)=e^{(1+A)(s'-t+\delta)}\,\overline u(s',x)$$
are, respectively, a sub- and super-solution of~\eqref{eqF} for $(s',x) \in (t-\delta,+\infty)\times \R^n$, in the sense that
$$u^-_{s'}+F(u(s',\cdot))\cdot\nabla u^-+\nabla\cdot F(u(s',\cdot))\,u^-\le\Delta u^-+u^-(1-u^-)$$
and
$$u^+_{s'}+F(u(s',\cdot))\cdot\nabla u^++\nabla\cdot F(u(s',\cdot))\,u^+\ge\Delta u^++u^+(1-u^+)$$
in $(t-\delta,+\infty)\times\R^n$. Furthermore, $u^-(t-\delta,\cdot)\le u(t-\delta,\cdot)=u^+(t-\delta,\cdot)$ in $\R^n$. Hence, we have that, for all $(s',x) \in(t-\delta,+\infty)\times\R^n$,
\begin{equation}\label{eq:u_overline_u}
	\min\{1,\|u\|_{\infty}^{-1}\}\,e^{-A(s' -t+\delta)}\, \overline u(s',x)
		\leq u(s',x)
		\leq e^{(1+A)(s'-t+\delta)}\,\overline u(s',x).
\end{equation}
Let $\Gamma(t,s,x,y)$ be the fundamental solution of~\eqref{gammav} with $v(t,x)=F(u(t,\cdot))(x)$.  Then, using~\eqref{eq:u_overline_u} and H\"older's inequality, we write
\[\begin{split}
	u(t,x)
		&\leq e^{(1+A)\delta}\,\overline u(t,x)
		= e^{(1+A)\delta} \int_{\R^n} \Gamma(t,t-\delta,x,z)\,u(t-\delta,z)\,dz\\
		&\leq e^{(1+A)\delta}\,\|u\|_\infty^{1/q} \int_{\R^n} \left(u(t-\delta,z)\,\Gamma(t,t-\delta,x,z)^{\alpha p}\right)^{1/p} \left( \Gamma(t,t-\delta,x,z)^{(1-\alpha) q}\right)^{1/q}dz\\
		&\leq e^{(1+A)\delta}\,\|u\|_\infty^{1/q}\,\|\Gamma(t,t-\delta,x,\cdot)^{1-\alpha}\|_q \left(\int_{\R^n} u(t-\delta,z) \Gamma(t,t-\delta,x,z)^{\alpha p} dz\right)^{1/p}.
\end{split}\]
Note that \Cref{lem:weak_heat_kernel} along with the fact that $\alpha < 1$ implies that $\|\Gamma(t,t-\delta,x,\cdot)^{1-\alpha}\|_q$ is bounded by a constant depending only on $n$, $A$, $\delta$, $\alpha$ and $q$, and then only on $t_0$, $p$, $A$ and $n$. Hence, we are finished with the proof of~\eqref{*14} if we show that there exists a constant $C_0>0$ depending only on $t_0$, $s_0$, $R$, $p$, $A$ and $n$ such that
\begin{equation}\label{eq:comparable_kernel}
	\Gamma(t,t-\delta, x, z)^{\alpha p} \leq C_0\,\Gamma(t+s,t-\delta, y, z)
\end{equation}
for all $s\in[0,s_0]$, $|x-y|\le R$ and $z\in\R^n$. Indeed, were this the case, then the above, along with~\eqref{eq:u_overline_u}, implies that
\begin{equation}\label{eq:p_harnack}
\begin{split}
	u(t,x)
		&\leq C_0^{1/p}\,e^{(1+A)\delta}\,\|u\|_\infty^{1/q} \left(\int_{\R^n} u(t-\delta,z)\,\Gamma(t+s,t-\delta,x,z)\,dz\right)^{1/p}\\
		&= C_0^{1/p}\,e^{(1+A)\delta}\,\|u\|_\infty^{1/q}\,\overline u(t+s,x)^{1/p}\\
		&= C_0^{1/p}\,e^{(1+A)\delta +A(\delta+s)/p}\,\|u\|_\infty^{1/q}\,\left( e^{-A(\delta+s)} \,\overline u(t+s,x)\right)^{1/p}\\
		&\leq C_0^{1/p}\,e^{(1+A)\delta +A(\delta+s_0)/p}\,\max\{\|u\|_\infty^{1-1/p},\|u\|_\infty\}\,u(t+s,x)^{1/p}.
\end{split}\end{equation}

We now prove~\eqref{eq:comparable_kernel}.  We assume that $z = 0$, though the general case follows similarly.  We fix any $s\in[0,s_0]$ and any $x$, $y$ in $\R^n$ such that $|x-y|\le R$. There are two cases to consider.  If  $|x|, |y| \leq\sqrt n\,A\,\delta + R+1$, then we use well-known heat kernel bounds from, e.g.,~\cite[Theorem 10]{Aronson}, which state that there exists $C_1\ge1$, depending only on $\delta$, $s_0$, $A$, and $n$ such that, for any $x',y' \in \R^n$ and any $0 \leq s' < t'$, such that $t' - s' \leq s_0+\delta$, then
\begin{equation}\label{eq:Aronson}
	\frac{1}{C_1(t'-s')^{n/2}} e^{-C_1\frac{|x'-y'|^2}{t'-s'}}
		\leq \Gamma(t',s', x',y')
		\leq \frac{C_1}{(t'-s')^{n/2}} e^{-\frac{|x'-y'|^2}{C_1(t'-s')}}.
\end{equation}
From~\eqref{eq:Aronson}, it is clear that there exists a constant $C_2>0$ depending only on $\delta$, $s_0$, $A$, $\alpha$, $p$, $n$, and $R$, and hence only on $t_0$, $s_0$, $R$, $p$, $A$, and $n$, such that
\begin{equation}\label{eq:Gamma_1}
	\frac{\Gamma(t,t-\delta,x,0)^{\alpha p}}{\Gamma(t+s,t-\delta,y,0)}
		\leq C_2
\end{equation}
if $|x|,|y|\le \sqrt n\,A\,\delta +R+1$.  We note that the bounds in \cite[Theorem~10]{Aronson} are not sharp enough as $|x| \to \infty$ to be useful in the regime $|x| > \sqrt n\,A\,\delta+R+1$.

If  either $|x|$ or $|y|$ is larger than $\sqrt{n}\,A\,\delta + R+1$, then both $|x|, |y| > \sqrt n\,A\,\delta+1\ge A\,\delta+1$ because $|x-y| \leq R$.  Then, applying the bounds in \Cref{lem:weak_heat_kernel} yields:
\begin{equation}\label{eq:Gamma_alpha}
	\frac{\Gamma(t,t-\delta,x,0)^{\alpha p}}{\Gamma(t+s,t-\delta,y,0)}
		\leq \frac{\left(\!\frac{1}{\sqrt{4\pi\delta}}\!+\!\frac{A}{2}\!\right)^{\alpha p(n-1)}\!\!\!\!\times\left(\!\frac{1}{\sqrt{4\pi\delta}}\!+\!\frac{A\sqrt{\delta}}{\sqrt{4\pi}(|x|\!-\!A\delta)}\!\right)^{\alpha p}e^{-\frac{\alpha p(|x|\!-\!A\delta)^2}{4\delta}}}
		{(16\pi (\delta + s))^{-n/2}\,e^{-\frac{(|y| + A\,\sqrt{n}\,(s+\delta))^2}{4(s+\delta)}}}.
\end{equation}
Let $\epsilon > 0$ be a constant to be determined and $A_1 = |x-y| + A\,\sqrt{n}\,(s+\delta)$.  Notice that $A_1$ is bounded by a constant depending only on $A$, $n$, $s_0$, $t_0$, and $R$ (remember that $|x-y|\le R$) and that $|y| + A\,\sqrt{n}\,(s+\delta) \leq |x| + A_1$.  Using~\eqref{eq:Gamma_alpha} and Young's inequality, there exists a constant $C_3>0$ depending only on $t_0$, $s_0$, $p$, $A$, and $n$, such that
\[\begin{split}
	\frac{\Gamma(t,t-\delta,x,0)^{\alpha p}}{\Gamma(t+s,t-\delta,y,0)}
		&\leq C_3 \exp\left\{ - \alpha p \frac{|x|^2 - 2|x|A\delta + A^2\delta^2}{4\delta} + \frac{|x|^2 + 2|x|A_1 + A_1^2}{4(s+\delta)}\right\}\\
		&\leq C_3 \exp\left\{ - \alpha p \frac{(1- \e)|x|^2 + \left(1 -\frac{1}{\e}\right)A^2\delta^2}{4\delta} + \frac{(1+\epsilon)|x|^2 + \left(1 + \frac{1}{\e}\right)A_1^2}{4(s+\delta)}\right\}\\
		&\leq C_3 \exp\left\{ - |x|^2\left(\frac{\alpha p(1-\e)}{4\delta} - \frac{1+\e}{4(\delta + s)}\right) + \frac{\alpha p A^2\delta^2}{4\delta \e} + \frac{\left(1 + \frac{1}{\e}\right)A_1^2}{4(s+\delta)}\right\}.
\end{split}\]
Since $\alpha p > 1$ and $\delta \leq \delta + s$, it follows that we may choose $\e>0$ small enough, depending only on $p$, such that $\alpha p (1-\e) \geq 1+\e$ and hence
\[
	\frac{\alpha p(1-\e)}{4\delta} - \frac{1+\e}{4(\delta + s)} \geq 0.
\]
Using this inequality, we see that there exists $C_4$ depending only on $t_0$, $s_0$, $p$, $A$, $n$, and $R$, such that
\[
	\frac{\Gamma(t,t-\delta,x,0)^{\alpha p}}{\Gamma(t+s,t-\delta,y,0)} \leq C_4
\]
for all $s\in[0,s_0]$, $|x-y|\le R$ and $\max\{|x|, |y|\}>\sqrt n\,A\,\delta+R+1$, as desired.  The combination of this with~\eqref{eq:Gamma_1} implies~\eqref{eq:comparable_kernel}.  This yields~\eqref{eq:p_harnack} and~\eqref{*14}.

We now show how to obtain a gradient bound on $u$ from~\eqref{*14}.  Let $R = |x-y|$.  The local $L^q$ parabolic estimates, see e.g.~\cite[Theorem~7.22]{Lieberman} along with the anisotropic Sobolev embedding for $q = n+3$ (see e.g.~\cite[Lemma A3]{Engler}), implies that there exists a constant $C_{A,\delta,n,R}>0$ that depends only on $A$, $\delta$, $n$ and $R$ such that, for any $(t,x) \in (t_0,+\infty)\times \R^n$,
\[\begin{split}
	|\nabla u(t,x)|
		&\leq C_{A,\delta,n,R}\,\left(\|u\|_{L^q([t-\delta,t]\times B_R(x))} + \|u(1-u)\|_{L^q([t-\delta,t]\times B_R(x))} \right)\\
		&\leq 2\,C_{A,\delta,n,R}\,\|u\|_{L^\infty([t-\delta,t]\times B_R(x))}\,(1 + \|u\|_\infty).
\end{split}\]
Applying~\eqref{*14} to $\|u\|_{L^\infty([t-\delta,t]\times B_R(x))}$ implies that, for any $p\in (1,\infty)$, there exists a constant $C_0>0$ that depends only on $t_0$, $R$, $p$, $A$, and $n$ such that
\[
	|\nabla u(t,x)|
		\leq C_0\,\max\{\|u\|_\infty^{1-1/p},\|u\|_\infty\}\, (1 + \|u\|_\infty)\,u(t,y)^{1/p}
\]
for all $|x-y|\le R$, which finishes the proof.
\end{proof}

 
\bibliographystyle{abbrv}
\bibliography{refs2}
\end{document}